\newtheorem{theorem}{Theorem}[section]
\newtheorem{lemma}[theorem]{Lemma}
\newtheorem{corollary}[theorem]{Corollary}
\newtheorem{question}[theorem]{Question}
\newtheorem{example}[theorem]{Example}
\theoremstyle{definition}
\newtheorem{definition}[theorem]{Definition}
\newtheorem{proposition}[theorem]{Proposition}
\theoremstyle{remark}
\newtheorem{remark}[theorem]{Remark}
\begin{document}

\title[Some generalized metric properties of $n$-semitopological groups]
{Some generalized metric properties of $n$-semitopological groups}

\author{Fucai Lin*}
\address{(Fucai Lin): 1. School of mathematics and statistics,
  Minnan Normal University, Zhangzhou 363000, P. R. China; 2. Fujian Key Laboratory of Granular Computing and Application, Minnan Normal University, Zhangzhou 363000, P. R. China}
\email{linfucai2008@aliyun.com; linfucai@mnnu.edu.cn}

\author{Xixi Qi}
\address{(Xixi Qi): 1. School of mathematics and statistics,
  Minnan Normal University, Zhangzhou 363000, P. R. China}
\email{2473148247@qq.com}

\thanks{The authors are supported by Fujian Provincial Natural Science Foundation of China (No: 2024J02022) and the NSFC (Nos. 11571158).\\
*corresponding author}

\keywords{$n$-semitopological group; paratopological group; quasi-topological group; locally compact; metrizable topology.}
\subjclass[2020]{Primary 54H11; 22A05; secondary 54A25; 54B15; 54E35.}

\begin{abstract}
A semitopological group $G$ is called {\it an $n$-semitopological group}, if for any $g\in G$ with $e\not\in\overline{\{g\}}$ there is a neighborhood $W$ of $e$ such that $g\not\in W^{n}$, where $n\in\mathbb{N}$. The class of $n$-semitopological groups ($n\geq 2$) contains the class of paratopological groups and Hausdorff quasi-topological groups. Fix any $n\in\mathbb{N}$. Some properties of $n$-semitopological groups are studied, and some questions about $n$-semitopological groups are posed. Some generalized metric properties of $n$-semitopological groups are discussed, which contains mainly results are that (1) each Hausdorff first-countable 2-semitopological group admits a coarser semi-metrizable topology; (2) each locally compact, Baire and $\sigma$-compact 2-semitopological group is a topological group; (3) the condensation of some kind of 2-semitopological groups topologies are given. Finally, some cardinal invariants of $n$-semitopological groups are discussed.
\end{abstract}

\maketitle

\section{Introduction and Preliminaries}
Let $G$ be a group, and let $\mathscr{F}$ be a topology on $G$. We say that

$\bullet$ $G$ is a {\it semitopological group} if the product map of $G\times G$ into $G$ is separately
continuous under the topology $\mathscr{F}$;

$\bullet$ $G$ is a {\it quasitopological group} if, under the topology $\mathscr{F}$, the space $G$ is a semitopological group and the inverse map of $G$ onto itself
associating $x^{-1}$ with arbitrary $x\in G$ is continuous;

$\bullet$ $G$ is a {\it paratopological group} if
the product map of $G \times G$ into $G$ is jointly continuous under the topology $\mathscr{F}$;

$\bullet$ $G$ is a {\it topological group} if, under the topology $\mathscr{F}$, the space $G$ is a paratopological group and the inverse map of $G$ onto itself
associating $x^{-1}$ with arbitrary $x\in G$ is continuous.

The classes of semitopological groups, quasitopological groups, paratopological groups and topological groups were studied from twentieth century, see \cite{AA}.
In \cite{ER1}, R. Ellis proved that each locally compact Hausdorff semitopological group is a topological group, which shows that each compact Hausdorff semitopological group is a topological group. Recently, the concept of almost paratopological group has been introduced by E. Reznichenko in \cite{RE}, which is a generalized of paratopological groups and Hausdorff quasitopological groups. A semitopological group $G$ is called {\it almost paratopological}, if for any $g\in G$ with $e\not\in\overline{\{g\}}$ there is a neighborhood $W$ of $e$ such that $g\not\in W^{2}$. By applying the concept of almost paratopological group, it is proved in \cite{RE} that each compact almost paratopological group is a topological group. However, there exists a compact $T_{1}$ quasitopological group which is not a topological group, such as, the integer group with the finite complementary topology. In this paper, we define the following concept of $n$-semitopological group ($n\in\mathbb{N}$) and $\infty$-semitopological group, where each almost paratopological group is called $2$-semitopological group.

\begin{definition}
Fix an $n\in\mathbb{N}$. A semitopological group $G$ is called {\it an $n$-semitopological group}, if for any $g\in G$ with $e\not\in\overline{\{g\}}$ there is a neighborhood $W$ of $e$ such that $g\not\in W^{n}$.
In particular, $G$ is called {\it an $\infty$-semitopological group}, if for any $n\in\mathbb{N}$ and $g\in G$ with $e\not\in\overline{\{g\}}$ there is a neighborhood $W$ of $e$ such that $g\not\in W^{n}$.
\end{definition}

\begin{remark}
Clearly, each semitopological group and each almost paratopological group are just an $1$-semitopological group and a $2$-semitopological group respectively. In \cite{RE}, E. Reznichenko proved that all paratopological groups and Hausdorff quasi-topological groups are $\infty$-semitopological groups and 2-semitopological groups respectively. Obviously, there exists an $\infty$-semitopological group which is neither a paratopological group nor a quasi-topological group, see the following example.
\end{remark}

\begin{example}
Let $G=\mathbb{R}$ with the usual addition. Put $\mathscr{P}=\{[0, \frac{1}{n})-\mathbb{Q}_{+}: n\in\mathbb{N}\}$ and $\mathscr{B}=\{x+P: P\in\mathscr{P}, x\in \mathbb{R}\}$, where $\mathbb{Q}_{+}$ is the set all positive rational numbers. Let $\tau$ be a topology on $G$ such that $\mathscr{B}$ is a base for $\tau$. It is easy to see that $(G, \tau)$ is a Hausdorff $\infty$-semitopological group which is neither a paratopological group nor a quasi-topological group.
\end{example}

\begin{proof}
For any $k\in\mathbb{N}$, put $W_{k}=[0, \frac{1}{k})-\mathbb{Q}_{+}$. We first claim that $(G, \tau)$ is a Hausdorff $\infty$-semitopological group. Indeed, it is obvious that $(G, \tau)$ is Hausdorff. Fix any $n\in\mathbb{N}$. Take any $g\in G$ with $0\not\in\overline{\{g\}}=\{g\}$. Then $g\neq 0$, hence there exists $m\in\mathbb{N}$ such that $|g|>\frac{n}{m}$. Clearly, we have $g\not\in nW_{m}$ since $|g|>\frac{n}{m}$. Therefore, $(G, \tau)$ is a Hausdorff $\infty$-semitopological group. Now it suffices to prove that $(G, \tau)$ is neither a paratopological group nor a quasi-topological group. Clearly, $(G, \tau)$ is not a quasi-topological group since the neutral element $0$ does not have the symmetric neighborhood base. Moreover, for any $k\in\mathbb{N}$, since the numbers $\frac{1}{2k}-\frac{1}{2\sqrt{2}k}, \frac{1}{2\sqrt{2}k}$ belong to $W_{k}$, the set $W_{k}+W_{k}$ contains the rational number $\frac{1}{2k}-\frac{1}{2\sqrt{2}k}+\frac{1}{2\sqrt{2}k}=\frac{1}{2k}$. Therefore, $(G, \tau)$ is not a paratopological group.
\end{proof}

\begin{remark}
(1) If a $T_{0}$-quasitopologicl group $G$ is a 2-semitopological group, then $G$ is Hausdorff.

(2) Each compact 2-semitopological group is a topological group, see \cite[Theorem 6]{RE}.

(3) A $\sigma$-compact regular 2-semitopological group is ccc, see \cite[Corollary 4]{RE}.

(4) For any $T_{1}$-semitopological group $G$ and $n\in\mathbb{N}$, $G$ is an $n$-semitopological group if and only if $\bigcap_{U\in\mathcal{N}_{e}}U^{n}=\{e\}$, where $\mathcal{N}_{e}$ denotes the family of all neighborhoods of the neutral element $e$ of $G$.
\end{remark}

The following question is interesting. Indeed, we give an example to show that there exists a 2-semitopological group which is not a $3$-semitopological group in Section 2.

\begin{question}\label{q3}
For any $n\in \mathbb{N}\setminus\{1\}$, does there exists an $n$-semitopological group $G$ such that $G$ is not an ($n+1$)-semitopological group?
\end{question}

Moreover, it is natural to pose the following question by above remark.

\begin{question}\label{q2}
If $G$ is a locally compact $n$-semitopological group for some $n\in (\mathbb{N}\cup\{\infty\})\setminus\{1\}$, is $G$ a topological group?
\end{question}

In this paper, we give some partial answers to above question and discuss some generalized metric properties of $n$-semitopological groups, where $n\in(\mathbb{N}\cup\{\infty\})\setminus\{1\}$. The paper is organized as follows.

In Section 2, we mainly give some topological properties of $n$-semitopological groups ($n\in\mathbb{N})$. First, we give a Hausdorff quasi-topological group $G$ (thus a 2-semitopological group) such that $G$ is not a $3$-semitopological group. Moreover, we prove that (1) Each Hausdorff $2$-semitopological group is weakly $qg$-separated; (2) For any $m\in\mathbb{N}\setminus\{1\}$, a semitopological group $G$ is a $T_{1}$ $m$-semitopological group if and only if $S_{G}^{m}=\{(x_{1}, \ldots, x_{m})\in G^{m}: x_{1}\cdot\ldots \cdot x_{m}=e\}$ is closed in $G^{m}$.

In Section 3, we mainly discuss some generalized metric properties of $n$-semitopological groups ($n\in\mathbb{N})$. We prove that (1) each Hausdorff first-countable 2-semitopological group admits a coarser semi-metrizable topology; (2) each locally compact, Baire and $\sigma$-compact 2-semitopological group is a topological group; (3) the condensation of a 2-semitopological group topology is given.

In Section 4, we mainly consider some cardinal invariants of $n$-semitopological groups ($n\in\mathbb{N})$. We mainly prove that (1) if $G$ is a $T_{1}$ $m$-semitopological group and $H$ a compact closed neutral subgroup of $G$, where $m\in\mathbb{N}\setminus\{1\}$, then $G/H$ is an $(m$-$1)$-semitopological group; (2) if $G$ is a regular $\kappa$-Lindel\"{o}f $\kappa$-$\Sigma$ 2-semitopological group, then $G$ is a $\kappa$-cellular space.
Moreover, some interesting questions are posed.

The symbol $\mathbb{N}$ denotes the natural numbers. The letter $e$
denotes the neutral element of a group, and $I$ denotes the unit interval with usual topology. Put $\mathbb{N}^{\ast}=\mathbb{N}\cup\{\infty\}$. For a semitopological group $G$, we denote the family of all neighborhoods of the neutral element $e$ by $\mathcal{N}_{e}$. Readers may refer to \cite{AA, ER11, Gr} for notations and terminology not
explicitly given here.

\maketitle
\section{Some properties of $n$-semitopological groups}
In this section, we mainly discuss some properties of $n$-semitopological groups, and pose some questions about $n$-semitopological groups, where $n\in\mathbb{N}^{\ast}$. First, we give a partial answer to Question~\ref{q3}.

\begin{example}\label{e0}
There exists a Hausdorff quasi-topological group $G$ (thus a 2-semitopological group) such that $G$ is not a $3$-semitopological group.
\end{example}

\begin{proof}
We consider the strongest topology $\tau$ on the group of integers $G=\mathbb{Z}$ such that for every $z\in \mathbb{Z}$ the sequence $(z\pm n^2)_{n\in\omega}$ converges to $z$. We claim that $G$ is Hausdorff. Indeed, for each $m\in\mathbb{Z}$, put $F_{m}=\{m\pm n^2: n\in\omega\}$. Then each $F_{m}$ is compact in $\tau$. Let $\sigma$ be determined by the countable family of compact subsets $\{F_{m}: m\in\mathbb{N}\}$. Obviously, we have $\sigma\subset\tau$. Since for any distinct numbers $z,z'\in Z$, the sets $\{z\pm n^2:n\in \omega\}$ and $\{z'\pm n^2:n\in\omega\}$ has finite intersection, it is easy to see that $\sigma$ is a Hausdorff $k_\omega$-topology. Therefore, it is easy to check that $(G, \tau)$ is a Hausdorff quasi-topological group.
Next we claim that $1\in \bigcap\{U+U+U: 0\in U\in\tau\}.$

Indeed, it suffices to prove that for every even number $a>0$ there exist numbers $n, m, k>a$ such that $1=k^2+n^2-m^2$. Take $k=a^2+1$ and observe that $$k^2-1=(k-1)(k+1)=a^2(k+1)=m^2-n^2=(m-n)(m+n),$$ where $m,n$ can be found from the equation $m-n=a$ and $m+n=a(a^2+2)$. Then $m=\frac{a(a^2+3)}{2}>a$ and $n=\frac{a^3+a}{2}>a$. Thus, we have $1=k^2+n^2-m^2$.

Therefore, $(G, \tau)$ is a 2-semitopological group, but it is not a $3$-semitopological group.
\end{proof}

Next, we give some concepts in order to discuss some properties of $n$-semitopological groups.

Let $(G, \tau)$ be a semitopological group. The {\it paratopological group reflexion} $G^{pg}=(G, \tau^{pg})$ of $(G, \tau)$ we understand the group $G$ endowed with the strongest topology $\tau^{pg}\subset \tau$ turning $G$ into paratopological group. The {\it quasitopological group reflexion} $G^{qg}=(G, \tau^{qg})$ of $(G, \tau)$ we understand the group $G$ endowed with the strongest topology $\tau^{qg}\subset \tau$ turning $G$ into quasitopological group. Clearly, the following characteristic property holds:  the identity map $i: G\rightarrow G^{pg}$ is continuous and for every continuous group homomorphism $h: G\rightarrow H$ from $G$ into a paratopological group $H$ the homomorphism $h\circ i^{-1}: G^{pg}\rightarrow H$ is continuous. The situation of quasitopological group reflexion is similar. A subset $U$ of $G$ is called {\it $pg$-closed ($pg$-open)} if $U$ is closed ($pg$-open) in $G^{pg}$; a subset $U$ of $G$ is called {\it $qg$-closed ($qg$-open)} if $U$ is closed ($qg$-open) in $G^{qg}$. A semitopological group $G$ is called {\it $pg$-separated} ({\it $qg$-separated}) provided its group reflexion  $G^{pg}$ ($G^{qg}$) is Hausdorff.

First, we have the following two propositions.

\begin{proposition}\label{pp}
Let $G$ be a semitopological group, and let $\mathscr{B}$ be a neighborhood base of $e$. Then the family $\{U\cup U^{^{-1}}: U\in \mathscr{B}\}$ is a weak base base of $e$ in $G^{qg}$.
\end{proposition}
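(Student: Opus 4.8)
The plan is to pin down the topology of $G^{qg}$ concretely and then read off a neighbourhood base at $e$. The first step is to check the description $\tau^{qg}=\{A\subseteq G:A\in\tau\text{ and }A^{-1}\in\tau\}$. Writing $\rho$ for the right-hand side, one verifies that $\rho$ is a topology (since $A\mapsto A^{-1}$ commutes with unions and finite intersections) and that $\rho\subseteq\tau$. Because $(G,\tau)$ is a semitopological group its topology is invariant under all left and right translations, so $A\in\rho$ forces $gA,(gA)^{-1}=A^{-1}g^{-1}\in\tau$ and $Ag,(Ag)^{-1}\in\tau$; hence $\rho$ is translation invariant, i.e. $(G,\rho)$ is a semitopological group, and its inversion is $\rho$-continuous by construction, so $(G,\rho)$ is a quasitopological group. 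Conversely, any quasitopological group topology $\tau'\subseteq\tau$ has continuous inversion, so $A\in\tau'$ implies $A^{-1}\in\tau'\subseteq\tau$ and thus $\tau'\subseteq\rho$. Therefore $\rho$ is the strongest quasitopological group topology coarser than $\tau$, i.e. $\rho=\tau^{qg}$.

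Next I would show that $\{U\cup U^{-1}:U\in\mathscr{B}\}$ is a base of the filter $\mathcal{N}'=\{V\subseteq G:V,V^{-1}\in\mathcal{N}_{e}\}$ and that $\mathcal{N}_{e}(G^{qg})\subseteq\mathcal{N}'$. Each $U\cup U^{-1}$ with $U\in\mathscr{B}$ is inversion-invariant as a subset and contains the $\tau$-neighbourhood $U$ of $e$, so $U\cup U^{-1}$ and $(U\cup U^{-1})^{-1}$ lie in $\mathcal{N}_{e}$, i.e. $U\cup U^{-1}\in\mathcal{N}'$. If $V\in\mathcal{N}'$, pick $U_{1},U_{2}\in\mathscr{B}$ with $U_{1}\subseteq V$ and $U_{2}\subseteq V^{-1}$ and then $U\in\mathscr{B}$ with $U\subseteq U_{1}\cap U_{2}$; then $U\cup U^{-1}\subseteq U_{1}\cup U_{2}^{-1}\subseteq V$, which gives the base property. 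Finally, if $V\in\mathcal{N}_{e}(G^{qg})$ then $V\in\mathcal{N}_{e}$ (as $\tau^{qg}\subseteq\tau$) and $V^{-1}\in\mathcal{N}_{e}(G^{qg})\subseteq\mathcal{N}_{e}$ (as inversion is a self-homeomorphism of $G^{qg}$ fixing $e$), so $V\in\mathcal{N}'$; hence $\mathcal{N}_{e}(G^{qg})\subseteq\mathcal{N}'$.

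It remains to establish the reverse inclusion $\mathcal{N}'\subseteq\mathcal{N}_{e}(G^{qg})$, i.e. that each $U\cup U^{-1}$ is genuinely a neighbourhood of $e$ in $G^{qg}$; by the first step this means producing, for a given $U\in\mathscr{B}$, a set $A$ with $A,A^{-1}\in\tau$ and $e\in A\subseteq U\cup U^{-1}$. Equivalently, one must verify that $\mathcal{N}'$ — which is a filter, is invariant under inversion, and is invariant under conjugation because $(G,\tau)$ is a semitopological group — satisfies the neighbourhood axiom: for every $V\in\mathcal{N}'$ the set $\{x\in G:x^{-1}V\in\mathcal{N}'\}$ again belongs to $\mathcal{N}'$. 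Granting this, the translation-invariant topology $\sigma$ having $\mathcal{N}'$ as neighbourhood filter at $e$ is a semitopological group topology with continuous inversion, hence a quasitopological group topology contained in $\tau$, so $\sigma\subseteq\tau^{qg}$; combined with $\mathcal{N}_{e}(G^{qg})\subseteq\mathcal{N}'=\mathcal{N}_{e}(\sigma)$ this forces $\tau^{qg}=\sigma$, and then $\{U\cup U^{-1}:U\in\mathscr{B}\}$ is precisely a neighbourhood base at $e$ in $G^{qg}$. I expect this last verification — interchanging the $\tau$-interior operator with set-inversion using only that the multiplication is separately (not jointly) continuous — to be the one genuinely delicate point; everything else reduces to routine manipulation of bases and translations.
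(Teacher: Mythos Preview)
The paper supplies no proof of this proposition at all: it is simply declared ``obvious'' together with the proposition that follows. Your treatment is therefore far more careful than anything in the paper, and there is no authorial argument to compare against.

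Your identification $\tau^{qg}=\{A\subseteq G:A\in\tau\text{ and }A^{-1}\in\tau\}$ is correct and cleanly argued, and from it the inclusion $\mathcal{N}_{e}(G^{qg})\subseteq\mathcal{N}'$ follows exactly as you indicate: if $A\in\tau^{qg}$ with $e\in A$, then $A\cap A^{-1}$ is a $\tau$-open neighbourhood of $e$, so some $U\in\mathscr{B}$ satisfies $U\subseteq A\cap A^{-1}$ and hence $U\cup U^{-1}\subseteq A$. That settles one half of ``neighbourhood base''.

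You are right that the remaining half --- that each $U\cup U^{-1}$ is genuinely a $\tau^{qg}$-neighbourhood of $e$, equivalently $\mathcal{N}'\subseteq\mathcal{N}_{e}(G^{qg})$ --- is the only point with real content, and your reduction to the interior/neighbourhood axiom for $\mathcal{N}'$ is exactly the right diagnosis. Concretely, one is led to show that $\operatorname{int}_{\tau}(V)\cap(\operatorname{int}_{\tau}(V^{-1}))^{-1}$ lies in $\mathcal{N}'$ whenever $V\in\mathcal{N}'$, and this does not follow from separate continuity of multiplication alone; the set $(\operatorname{int}_{\tau}(V^{-1}))^{-1}$ need not be a $\tau$-neighbourhood of $e$. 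The paper's single application of the proposition (in the proof of Proposition~\ref{p1}) does use this direction --- it needs $W\cup W^{-1}$ to be an actual $\tau^{qg}$-neighbourhood of $e$ in order to separate $e$ from $g$ --- so the point cannot be sidestepped by reading ``neighbourhood base'' loosely as ``cofinal family''.

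In short: your outline is sound, your flagged gap is real, and the paper's ``obvious'' glosses over precisely the step you have isolated.
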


\begin{proof}
By \cite[Construction 1.3.8 and Theorem~1.3.10]{AA}, it is easy to verify that the family $\{U\cup U^{^{-1}}: U\in \mathscr{B}\}$ is a weak base base of $e$ in $G^{qg}$.
\end{proof}

\begin{proposition}\label{ppp}
Let $(G, \tau)$ be a semitopological group, and let $\mathscr{B}$ be a neighborhood base of $e$. Then the topology on $G$ generated by the family $\mathscr{F}_{e}=\{UU^{^{-1}}\cap U^{^{-1}}U: U\in \mathscr{B}\}$ is a quasi-topological group which is coarser than $G^{qg}$.
\end{proposition}

\begin{proof}
First, we prove that the topology on $G$ generated by the family $\mathscr{F}_{e}=\{UU^{-1}\cap U^{-1}U: U\in \mathscr{B}\}$ is a quasi-topological group. Indeed, by \cite[Construction 1.3.8, Proposition 1.3.9 and Theorem 1.3.10]{AA}, it suffices to prove that for any $UU^{-1}\cap U^{-1}U$ and any $x\in UU^{-1}\cap U^{-1}U$, there exists $W\in \mathscr{B}$ such that $x (WW^{^{-1}}\cap W^{-1}W)\subset UU^{-1}\cap U^{-1}U$, where $U\in\mathscr{B}$. Now pick any $x\in UU^{-1}\cap U^{-1}U$. Then there exist $y_{1}, z_{1}, y_{2}, z_{2}\in U$ such that $x=y_{1}z_{1}^{-1}=y_{2}^{-1}z_{2}$. Since $\mathscr{B}$ is a neighborhood base of $e$ in $G$, there exist $V_{1}, W_{1}\in \mathscr{B}$ such that $W_{1} \subset V_{1}\subset U$, $y_{1}V_{1}\subset U$, $V_{1}z_{1}\subset U$ and $z_{1}^{-1}W_{1}\subset V_{1}z_{1}^{-1}$. Hence $x=yz^{-1}\in yz^{-1}W_{1}W_{1}^{-1}\subset yV_{1}z^{-1}V_{1}^{-1}=yV (Vz)^{-1}\subset UU^{-1}$. Similarly, we can find $W_{2}\in \mathscr{B}$ such that $W_{2}\subset U$, $x=y_{2}^{-1}z_{2}\in y_{2}^{-1}z_{2}W_{2}^{-1}W_{2}\subset U^{-1}U$. Put $W=W_{1}\cap W_{2}$. Then we have $x(WW^{-1}\cap W^{-1}W)\subset UU^{-1}\cap U^{-1}U$.

Moreover, by the above proof, we have $x(W\cup W^{-1})\subset UU^{-1}\cap U^{-1}U$, which implies that $UU^{-1}\cap U^{-1}U$ is open in $G^{qg}$ by Proposition~\ref{pp}. Therefore, the topology on $G$ generated by the family $\mathscr{F}_{e}=\{UU^{^{-1}}\cap U^{^{-1}}U: U\in \mathscr{B}\}$ is coarser than $G^{qg}$.
\end{proof}

By Proposition~\ref{pp}, the following proposition is obvious.

\begin{proposition}
Let $G$ be a semitopological group. If $G^{pg}$ is $T_{1}$, then $G$ is an $\infty$-semitopological group; if $G^{qg}$ is Hausdorff, then $G$ is a $2$-semitopological group.
\end{proposition}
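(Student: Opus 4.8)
The plan is to unwind the definitions of the $pg$- and $qg$-reflexions and use the characterization of $n$-semitopological groups from Remark (4), namely that a $T_1$ (here $T_0$ suffices after the first reduction) semitopological group $G$ is an $n$-semitopological group precisely when, for each $g$ with $e\notin\overline{\{g\}}$, some $W\in\mathcal{N}_e$ omits $g$ from $W^n$. First I would treat the $\infty$-case. Suppose $G^{pg}$ is $T_1$. Fix $g\in G$ with $e\notin\overline{\{g\}}^{G}$; since the identity map $i\colon G\to G^{pg}$ is continuous, the point $g$ is distinct from $e$ in $G^{pg}$, and by $T_1$-ness of $G^{pg}$ there is a $\tau^{pg}$-neighborhood $V$ of $e$ with $g\notin V$. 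Because $G^{pg}$ is a paratopological group, for each fixed $n\in\mathbb{N}$ there is a $\tau^{pg}$-neighborhood $W'$ of $e$ with $(W')^n\subseteq V$, hence $g\notin (W')^n$. Pulling back along $i$, the set $W=i^{-1}(W')$ is a $\tau$-neighborhood of $e$ in $G$ with $W^n\subseteq i^{-1}((W')^n)$ (using that $i$ is a group homomorphism), so $g\notin W^n$. As $n$ was arbitrary, $G$ is an $\infty$-semitopological group.

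For the second assertion, suppose $G^{qg}$ is Hausdorff. Fix $g\in G$ with $e\notin\overline{\{g\}}^{G}$; again $i\colon G\to G^{qg}$ is continuous, so $g\neq e$ in $G^{qg}$. By Hausdorffness pick disjoint $\tau^{qg}$-open sets, or more simply a symmetric $\tau^{qg}$-neighborhood $V=V^{-1}$ of $e$ with $g\notin V\cdot V$ — this is available because in the quasitopological group $G^{qg}$ inversion is a homeomorphism and multiplication is separately continuous, and Hausdorffness of a $T_1$ group lets one separate $e$ from $g$ by a neighborhood $U$ with $g\notin U^{-1}U$, after which one sets $V=U\cup U^{-1}$ using Proposition~\ref{pp}. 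Then $W=i^{-1}(V)$ is a $\tau$-neighborhood of $e$ in $G$, it is symmetric, and $W^2=i^{-1}(V)\cdot i^{-1}(V)\subseteq i^{-1}(V\cdot V)$ does not contain $g$. Hence $G$ is a $2$-semitopological group.

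The one genuine subtlety — and the step I expect to require the most care — is the passage ``$g\neq e$ in $G^{qg}$'' from the hypothesis $e\notin\overline{\{g\}}^{G}$. What we actually know is only that $g$ is not in the $\tau$-closure of $\{e\}$; but the $qg$-topology is coarser, so a priori $\overline{\{g\}}^{G^{qg}}$ could be larger and might engulf $e$. Here one uses that Hausdorff (indeed $T_1$) spaces have closed singletons, so in $G^{qg}$ we have $\overline{\{e\}}^{G^{qg}}=\{e\}$; combined with $i$ being a continuous bijective homomorphism, $i(g)=g\in\overline{\{e\}}^{G^{qg}}$ would force $e\in\overline{i^{-1}(\{e\})}^{G}=\overline{\{e\}}^{G}$, and in a semitopological group $e\notin\overline{\{g\}}$ is equivalent to $g\notin\overline{\{e\}}$ (translating by $g^{-1}$, using separate continuity), contradicting the hypothesis. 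Once this reduction is pinned down, the rest is the routine pull-back of neighborhoods along the homomorphism $i$, together with the fact that $i^{-1}$ of a product of sets contains the product of the $i^{-1}$-images. I would therefore organize the write-up as: (i) the translation lemma $e\notin\overline{\{g\}}\iff g\notin\overline{\{e\}}$; (ii) the reduction to $g\neq e$ in the reflexion; (iii) the $\infty$-case via joint continuity in $G^{pg}$; (iv) the $2$-case via Proposition~\ref{pp} and separate continuity in $G^{qg}$.
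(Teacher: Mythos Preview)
The paper states this proposition as ``obvious'' and gives no proof, so there is nothing to compare against directly. Your overall strategy---pull back neighborhoods along the continuous identity $i$ from the reflexion, using joint continuity in $G^{pg}$ for the $\infty$-case and the quasitopological structure of $G^{qg}$ for the $2$-case---is the natural one and your treatment of the $\infty$-case is correct.

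There is, however, a genuine gap in your $2$-case. From $g\notin U^{-1}U$ you set $V=U\cup U^{-1}$ and claim $g\notin V^2$. But $V^2=(U\cup U^{-1})^2=U^2\cup UU^{-1}\cup U^{-1}U\cup (U^{-1})^2$, and you have only controlled the term $U^{-1}U$; nothing prevents $g\in U^2$, for instance. Proposition~\ref{pp} tells you $U\cup U^{-1}$ is a $\tau^{qg}$-neighborhood, but gives no information about its square. The fix is immediate: since you are already working inside the quasitopological group $G^{qg}$, inversion is a homeomorphism, so you may take $W=U\cap U^{-1}$ (an open symmetric $\tau^{qg}$-neighborhood of $e$). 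Then $W\subset U$ gives $Wg\cap W\subset Ug\cap U=\emptyset$, hence $g\notin W^{-1}W=W^2$, and since $\tau^{qg}\subset\tau$ you are done. Alternatively, you can simply invoke the fact recorded in the paper's Remark (from \cite{RE}) that every Hausdorff quasitopological group is a $2$-semitopological group, applied to $G^{qg}$.

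A minor point: your ``subtlety'' paragraph is unnecessary and contains an error. The hypothesis $e\notin\overline{\{g\}}^G$ already gives $g\neq e$, and since $G^{qg}$ is Hausdorff (hence $T_1$) one has $e\notin\overline{\{g\}}^{G^{qg}}$ immediately; no translation lemma is needed. Moreover, translating $e\notin\overline{\{g\}}$ by $g^{-1}$ yields $g^{-1}\notin\overline{\{e\}}$, not $g\notin\overline{\{e\}}$ as you wrote.
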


A space $X$ is said to be {\it weakly Hausdorff} if there exists a weak base $\mathscr{B}$ such that for any distinct points $x, y$ there exist $B_{1}, B_{2}\in\mathscr{B}$ such that $x\in B_{1}$, $y\in B_{2}$ and $B_{1}\cap B_{2}=\emptyset$. A semitopological group $G$ is called {\it weakly-$pg$-separated} ({\it weakly-$qg$-separated}) provided its group reflexion  $G^{pg}$ ($G^{qg}$) is weakly Hausdorff.

The following proposition shows that each Hausdorff $2$-semitopological group is weakly-$qg$-separated.

\begin{proposition}\label{p1}
Let $G$ be a Hausdorff $2$-semitopological group. Then $G$ is weakly-$qg$-separated.
\end{proposition}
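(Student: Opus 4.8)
The plan is to show that the quasitopological group reflexion $G^{qg}$ of $G$ is Hausdorff by exhibiting, for each $g \neq e$, a $qg$-neighborhood of $e$ that excludes $g$. Since $G$ is Hausdorff and $G^{qg}$ refines the topology determined by $\{U \cup U^{-1} : U \in \mathcal{N}_e\}$ (Proposition~\ref{pp}), it suffices to separate points from $e$ using sets of the form $U \cup U^{-1}$, and then invoke homogeneity (translations are homeomorphisms in any semitopological group, and inversion is a homeomorphism in $G^{qg}$) to upgrade this to full Hausdorffness of $G^{qg}$.

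First I would fix $g \in G$ with $g \neq e$. Because $G$ is Hausdorff, $e \notin \overline{\{g\}}$, so the $2$-semitopological group hypothesis provides $W \in \mathcal{N}_e$ with $g \notin W^2$. The key claim is then $g \notin (W \cup W^{-1})$ after possibly shrinking: more carefully, I want a symmetric $qg$-neighborhood of $e$ missing $g$. Here is where the exponent $2$ is used. Pick $V \in \mathcal{N}_e$ with $V \subseteq W$; I claim $g \notin V \cup V^{-1}$, or rather I should argue that if $g \in V \cup V^{-1}$ for every $V$, then $g \in \bigcap_{U \in \mathcal{N}_e}(U \cup U^{-1})$, and I must derive a contradiction with $g \notin W^2$. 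The point: $(U \cup U^{-1})$ generates, as a $qg$-neighborhood, something controlled by $U \cup U^{-1} \cup (U \cup U^{-1})^{-1} = U \cup U^{-1}$, but the relevant containment for ruling out $g$ comes from observing that $W \cup W^{-1} \subseteq$ nothing smaller than $W^2$ in general — so instead I should note $g \notin W^2$ implies in particular $g \notin W$ and $g^{-1} \notin W$ (since $e \in W$, $W \subseteq W^2$ and $W^{-1} \subseteq W^2$ as $e\in W$), hence $g \notin W \cup W^{-1}$, and $W \cup W^{-1}$ is precisely a basic $qg$-neighborhood of $e$ by Proposition~\ref{pp}. Thus $e$ and $g$ are separated in $G^{qg}$ by the $qg$-open set $G^{qg} \setminus (\text{a suitable closed set})$ — but to get genuine Hausdorff (not merely $T_1$) separation I need disjoint open sets.

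To obtain disjoint open sets, I would apply the standard semitopological-group trick: in $G^{qg}$, by continuity of multiplication in each variable and of inversion, choose $U \in \mathcal{N}_e$ (working in $G$) with $(U\cup U^{-1})$ and a translate such that $e(U\cup U^{-1}) \cap g(U\cup U^{-1}) = \emptyset$; this reduces to finding $U$ with $g \notin (U \cup U^{-1})(U \cup U^{-1})$, i.e. $g \notin (U\cup U^{-1})^2 = U^2 \cup U^{-1}U \cup UU^{-1} \cup U^{-2}$. The term $U^{-1}U$ and $UU^{-1}$ are the obstruction — these need not be controlled by $W^2$. So the real work, and the main obstacle, is showing that the $2$-semitopological condition still lets us separate: I would instead use that $G^{qg}$ is itself a quasitopological group, so for Hausdorffness it is enough to check the $T_1$ (equivalently $T_0$) separation axiom combined with Remark (1) after Proposition — namely, a $T_0$ quasitopological group that is $2$-semitopological is Hausdorff. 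Concretely: $G^{qg}$ is a quasitopological group; it is a quotient-type image of $G$ hence still a $2$-semitopological group (one checks the defining condition survives passing to the coarser reflexion topology, using that $\mathcal{N}_e$ in $G^{qg}$ consists of sets $U \cup U^{-1}$ with $U \in \mathcal{N}_e$, and $(U\cup U^{-1})^2 \supseteq$ enough); and $G^{qg}$ is $T_0$ because the argument above shows distinct points of the form $e, g$ are topologically distinguishable. Then Remark (1) after Proposition (the unnumbered one: "If a $T_0$-quasitopological group $G$ is a $2$-semitopological group, then $G$ is Hausdorff") applies directly to $G^{qg}$, yielding that $G^{qg}$ is Hausdorff, i.e. $G$ is $qg$-separated.

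The main obstacle I anticipate is the claim that $G^{qg}$ inherits the $2$-semitopological property from $G$: one must verify $\bigcap\{(U\cup U^{-1})^2 : U \in \mathcal{N}_e\} = \{e\}$ given $\bigcap\{U^2 : U \in \mathcal{N}_e\}$ behaves well, and this requires care because $(U \cup U^{-1})^2$ is strictly larger than $U^2$. I would handle this by the same arithmetic as above — given $g \neq e$ with (by Hausdorffness of $G$) $g \notin W^4$ for a suitable $W$ (apply the $2$-semitopological condition after one more continuity step to get control of a fourth power), deduce $g \notin (V \cup V^{-1})^2$ for $V \subseteq W$ — and if that fails I would fall back on proving $qg$-separatedness directly via the $T_0$ route without claiming $G^{qg}$ is $2$-semitopological, instead showing by hand that any two distinct points of $G^{qg}$ lie in disjoint $qg$-open sets using only the single inequality $g \notin W^2$ together with the symmetry $W \cup W^{-1}$.
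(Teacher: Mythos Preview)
Your proposal has a genuine gap. You correctly identify the target: to show $G^{qg}$ is Hausdorff it suffices to find, for each $g\neq e$, a neighborhood $W\in\mathcal{N}_e$ with $g(W\cup W^{-1})\cap(W\cup W^{-1})=\emptyset$, equivalently $g\notin (W\cup W^{-1})^2=W^2\cup WW^{-1}\cup W^{-1}W\cup (W^{-1})^2$. But you then abandon this direct route for the wrong reason. You write that ``the term $U^{-1}U$ and $UU^{-1}$ are the obstruction --- these need not be controlled by $W^2$.'' They are not controlled by the $2$-semitopological condition, but they \emph{are} controlled by the Hausdorff hypothesis, which you never exploit beyond getting $T_1$. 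In a semitopological group both left and right translations are homeomorphisms, so Hausdorffness gives, for each $g\neq e$, some $U\in\mathcal{N}_e$ with $gU\cap U=\emptyset$ (hence $g\notin UU^{-1}$) and some $U'$ with $U'g\cap U'=\emptyset$ (hence $g\notin U'^{-1}U'$). This disposes of the two ``cross'' terms.

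What remains is $g\notin W^2$ and $g\notin (W^{-1})^2$. The first is the $2$-semitopological hypothesis applied to $g$. For the second you need to apply the same hypothesis to $g^{-1}$ (legitimate, since $G$ is $T_1$, so $e\notin\overline{\{g^{-1}\}}$), obtaining $V$ with $g^{-1}\notin V^2$, i.e.\ $g\notin (V^{-1})^2$; the paper invokes \cite[Proposition~5(4)]{RE} for this. Intersecting all four neighborhoods gives the desired $W$. Your alternative plan --- showing $G^{qg}$ is itself $2$-semitopological and then citing the Remark --- is circular: verifying $\bigcap_U (U\cup U^{-1})^2=\{e\}$ is exactly the computation above. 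And your proposed fix (``get control of a fourth power'') cannot work: Example~\ref{e0} exhibits a Hausdorff $2$-semitopological group that is not even $3$-semitopological, so no fourth-power control is available in general, and in any case $(V\cup V^{-1})^2\supseteq V^{-2}$ is not contained in any positive power of $W$.
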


\begin{proof}
Take any $g\neq e$. Since $G$ is a Hausdorff $2$-semitopological group, there exists an open neighborhood $U$ of $e$ such that $gU\cap U=\emptyset$ and $Ug\cap U=\emptyset$, $g\not\in U^{2}$. Moreover, it follows from \cite[Proposition 5 (4)]{RE} that there exists an open neighborhood $W\subset U$ of $e$ such that $\{g, g^{-1}\}\cap (W^{-1})^{2}=\emptyset$. Then $gW\cap W=\emptyset$, $Wg\cap W=\emptyset$, $g\not\in W^{2}$ and $g\not\in (W^{-1})^{2}$, hence $gW\cap (W\cup W^{-1})=\emptyset$ and $gW^{-1}\cap (W\cup W^{-1})=\emptyset$. Therefore, we have $g(W\cup W^{-1})\cap (W\cup W^{-1})=\emptyset$. Thus $G$ is weakly-$qg$-separated by Proposition~\ref{pp}.
\end{proof}

Let $X$ be a space, and let $(Homeop(X), \tau_{p})$ be the group of all
homeomorphisms of $X$ onto itself, with the pointwise convergence topology. Then $(Homeop(X), \tau_{p})$ is a semitopological group, but it need not be a topological group, see \cite[Example 1.2.12]{AA}. It is well-known that if $X$ is a discrete space or $X=I$ then $(Homeop(X), \tau_{p})$ is a topological group, see \cite[Exercises 1.2.k]{AA}. Therefore, the following question is interesting.

\begin{question}
How to given a characterization $\mathcal{P}$ of the space $X$ such that $(Homeop(X), \tau_{p})$ is a $2$-semitopological group if and only if $X$ has the property $\mathcal{P}$?
\end{question}

\begin{proposition}
Let $X$ be a $T_{2}$ locally compact space and $(Homeop(X), \tau_{c})$ the group of all
homeomorphisms of $X$ onto itself, with the compact-open topology. Then $(Homeop(X), \tau_{c})$ is an $\infty$-semitopological group.
\end{proposition}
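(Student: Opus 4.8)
The plan is to exhibit an explicit neighborhood witnessing the $n$-semitopological condition for each $n$. Recall that in the compact-open topology $\tau_c$ on $H := Homeop(X)$, a neighborhood base at the identity $e = \mathrm{id}_X$ is given by sets of the form $W(K, U) = \{h \in H : h(K) \subseteq U \text{ and } h^{-1}(K) \subseteq U\}$ where $K \subseteq X$ is compact and $U \supseteq K$ is open (the symmetrization is harmless here and convenient). Since $X$ is locally compact Hausdorff, for each point $x \in X$ and each open $V \ni x$ we may choose a compact neighborhood $K_x$ of $x$ with $K_x \subseteq V$. The key point is that if $h \notin \overline{\{e\}}$ — equivalently, by Hausdorffness of $X$ (which forces $T_1$ on $H$, making $\overline{\{e\}}=\{e\}$), simply $h \neq e$ — then $h$ moves some point: there is $x_0 \in X$ with $h(x_0) \neq x_0$.

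First I would separate $h(x_0)$ from $x_0$: pick disjoint open sets $V_0 \ni x_0$ and $V_1 \ni h(x_0)$. The heart of the argument is that $h$, applied repeatedly, cannot stay near $x_0$: I want to find a single point whose first $n$ iterates under $h$ are controlled, so that a neighborhood $W$ of $e$ shrunk around that point excludes $h$ from $W^n$. Concretely, I would fix $n$ and look for a point $y$ and disjoint-enough open sets so that any $g_1, \dots, g_n$ each lying in a suitable basic neighborhood satisfy $g_1 \cdots g_n(y) \neq h(y)$. A clean way: using local compactness, choose a compact neighborhood $K$ of $x_0$ small enough that $K$ and $h(K)$ are disjoint (possible since $h(x_0) \neq x_0$ and $h$ is continuous). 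Then set $W = W(K \cup h(K) \cup \dots, \text{small open hull})$; more carefully, I would build a nested chain of compact neighborhoods and take $W$ to be the basic neighborhood forcing every element to move each of finitely many relevant compacta only slightly, so that an $n$-fold product still cannot carry $x_0$ all the way to $h(x_0)$.

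In detail, the argument I would run is: given $h \neq e$ and $n \in \mathbb{N}$, choose $x_0$ with $h(x_0) \neq x_0$, then disjoint open $V_0 \ni x_0$, $V_1 \ni h(x_0)$; by local compactness and regularity pick a compact neighborhood $C$ of $x_0$ inside $V_0$ and, inductively, open sets $V_0 \supseteq O_0 \supseteq O_1 \supseteq \cdots \supseteq O_n$ and compact neighborhoods of $x_0$ interleaving them, all contained in $V_0$ (hence disjoint from $V_1$). Let $W = W(C, O_0)$, the basic $\tau_c$-neighborhood of $e$ of maps fixing $C$ into $O_0$ (and with the inverse condition). The claim is $h \notin W^n$: if $h = g_n \circ \cdots \circ g_1$ with each $g_i \in W$, then $g_1(x_0) \in O_0 \subseteq V_0$, and one wants $g_2 g_1(x_0) \in V_0$, etc. — this requires $O_0$ to be chosen so that $g(O_0) \subseteq V_0$ for all $g \in W$, which in turn wants $O_0$ to sit inside the interior of $C$. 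This forces a slightly more careful bookkeeping: take $C$ a compact neighborhood of $x_0$ with $C \subseteq V_0$, let $O = \mathrm{int}(C)$, and let $W = W(C, O) = \{g : g(C) \subseteq O,\ g^{-1}(C) \subseteq O\}$. Then for $g \in W$ and any $z \in O \subseteq C$ we get $g(z) \in O$; so by induction every iterate $g_k \cdots g_1(x_0) \in O \subseteq V_0$, whence $h(x_0) = g_n \cdots g_1(x_0) \in V_0$, contradicting $h(x_0) \in V_1$ and $V_0 \cap V_1 = \emptyset$. Thus $h \notin W^n$, and since $n$ was arbitrary, $H$ is an $\infty$-semitopological group. (That $(H, \tau_c)$ is at least a semitopological group is standard; joint continuity of composition and continuity of inversion hold for locally compact Hausdorff $X$, so in fact $H$ is a topological group here — but we only need the semitopological group structure plus the displayed neighborhood computation.)

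The main obstacle, and the place to be careful, is verifying that the chosen basic neighborhood $W$ is genuinely a neighborhood of $e$ in the compact-open topology and that the "invariance" $g(O) \subseteq O$ for $g \in W$ actually propagates through all $n$ compositions without the images creeping out of $V_0$; this is exactly why one takes $O = \mathrm{int}(C)$ with $\overline{O} \subseteq C \subseteq V_0$ and uses $W(C, O)$ rather than $W(C, V_0)$, so that the set $O$ is simultaneously the target of the constraint and contained in the compact set $C$ that anchors it. Local compactness of $X$ is used precisely to produce such a compact neighborhood $C$ with open interior containing $x_0$, and Hausdorffness is used both to separate $x_0$ from $h(x_0)$ and to guarantee $\overline{\{e\}} = \{e\}$ in $H$ so that the hypothesis "$e \notin \overline{\{h\}}$" reduces to $h \neq e$. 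Once this invariance bookkeeping is pinned down, the rest is immediate.
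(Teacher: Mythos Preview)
Your approach differs substantially from the paper's. The paper simply observes that for a locally compact Hausdorff space $X$ the group $(Homeop(X),\tau_c)$ is a paratopological group (composition is jointly continuous---a well-known fact), and every paratopological group is automatically an $\infty$-semitopological group. That is the entire argument.

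Your direct construction is more hands-on, but the version you finally settle on has a genuine gap. You take a compact neighborhood $C$ of $x_0$, set $O=\mathrm{int}(C)$, and put $W=\{g:g(C)\subseteq O,\ g^{-1}(C)\subseteq O\}$. For the identity to lie in $W$ one needs $C\subseteq O=\mathrm{int}(C)$, i.e.\ $C$ open; a compact neighborhood in, say, $\mathbb{R}$ is never open, so in general $e\notin W$ and your $W$ is not a neighborhood of $e$ at all. You flag precisely this as ``the main obstacle'' but do not actually resolve it---the claimed invariance $g(O)\subseteq O$ for $g\in W$ is vacuous once $W$ fails to contain $e$. The nested-chain idea you sketch earlier and then abandon is the correct repair: choose $x_0\in U_n\subseteq\overline{U_n}\subseteq U_{n-1}\subseteq\cdots\subseteq U_0\subseteq V_0$ with each $\overline{U_i}$ compact, and let $W=\bigcap_{i=1}^{n}\{g:g(\overline{U_i})\subseteq U_{i-1}\}$. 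Now $e\in W$ since $\overline{U_i}\subseteq U_{i-1}$, and the induction $g_k\cdots g_1(x_0)\in U_{n-k}$ goes through to give $g_n\cdots g_1(x_0)\in U_0\subseteq V_0$, contradicting $h(x_0)\in V_1$.

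One smaller point: your parenthetical remark that inversion is continuous (so that $H$ is even a topological group) is not true in general for locally compact non-compact $X$; only joint continuity of composition is guaranteed, which is exactly what the paper invokes.
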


\begin{proof}
Since $X$ is a $T_{2}$ locally compact space, it is well known that $(Homeop(X), \tau_{c})$ is a paratopological group, hence $(Homeop(X), \tau_{c})$ is an $\infty$-semitopological group.
\end{proof}

In particular, if $X$ is a $T_{2}$ compact space, then $(Homeop(X), \tau_{c})$ is a topological group, hence it is an $\infty$-semitopological group. However, the following question is still open.

\begin{question}
How to given a characterization $\mathcal{P}$ of the space $X$ such that $(Homeop(X), \tau_{c})$ is a $2$-semitopological group if and only if $X$ has the property $\mathcal{P}$?
\end{question}

Let $(X, \tau)$ be a space. A subset $A$ of $X$ is called {\it regular open} if $A=\mbox{int}(\overline{A})$. The family of all regular open sets forms a base for a smaller topology $\tau_{s}$ on $X$, which is called the {\it semi-regularization} of $\tau$. The following question is still unknown for us.

\begin{question}
Let $G$ be an $m$-semitopological group for some $m\in\mathbb{N}$. Is the semiregularization $G_{sr}$  an $m$-semitopological group? What if we assume the space to be $\infty$-semitopological group?
\end{question}

Next, we discuss some important properties of $m$-semitopological groups for some $m\in\mathbb{N}$.

\begin{theorem}
Let $G$ be a semitopological group and $m\in\mathbb{N}^{\ast}$. If one of the following conditions is satisfied, then $G$ is an $m$-semitopological group.
\begin{enumerate}
\smallskip
\item $G$ is a paratopological group;

\smallskip
\item $G$ is a subgroup of an $m$-semitopological group;

\smallskip
\item $G$ is the product of $m$-semitopological groups;

\smallskip
\item there exists a continuous isomorphism of $G$ onto a $T_{1}$ $m$-semitopological group.
\end{enumerate}
\end{theorem}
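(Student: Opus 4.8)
The plan is to verify each of the four conditions separately, in each case reducing to the defining property: for every $g\in G$ with $e\notin\overline{\{g\}}$ we must produce a neighborhood $W\in\mathcal{N}_e$ with $g\notin W^m$ (and, in the $m=\infty$ case, such a $W$ for every $n\in\mathbb{N}$).

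For (1), if $G$ is a paratopological group, then the multiplication $G^m\to G$ is jointly continuous, so the map $(x_1,\dots,x_m)\mapsto x_1\cdots x_m$ is continuous; given $g$ with $e\notin\overline{\{g\}}$ there is $U\in\mathcal{N}_e$ with $g\notin U$ (using that $\{e\}$ is closed in the relevant sense, or more carefully using that $g\notin\overline{\{e\}}$ so some open $U\ni e$ misses $g$), and by continuity of the $m$-fold product at $(e,\dots,e)$ we get $W\in\mathcal{N}_e$ with $W^m\subseteq U$, hence $g\notin W^m$; since this works for every $m\in\mathbb{N}$ it also gives the $\infty$ case. This is essentially the observation already recorded in the Remark after the Definition, so it is routine. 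For (2), suppose $H$ is an $m$-semitopological group and $G\le H$ carries the subspace topology. If $g\in G$ and $e\notin\overline{\{g\}}^{G}$, then a fortiori $e\notin\overline{\{g\}}^{H}$ (the $G$-closure is the intersection of the $H$-closure with $G$, and both $e,g$ lie in $G$), so there is a neighborhood $W_H$ of $e$ in $H$ with $g\notin W_H^m$; then $W=W_H\cap G\in\mathcal{N}_e$ in $G$ and $W^m\subseteq W_H^m$, so $g\notin W^m$. The $\infty$ case is identical, quantifying over $n$.

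For (3), let $G=\prod_{i\in I}G_i$ with each $G_i$ an $m$-semitopological group, and take $g=(g_i)\in G$ with $e\notin\overline{\{g\}}$. Then there is a basic open neighborhood of $e$ missing $g$, so there is some coordinate $i_0$ with $e_{i_0}\notin\overline{\{g_{i_0}\}}$ in $G_{i_0}$ (if $g_i\in\overline{\{e_i\}}$ for every $i$ then $g\in\overline{\{e\}}$ by continuity of the projections, contradiction). Pick $W_{i_0}\in\mathcal{N}_{e_{i_0}}$ with $g_{i_0}\notin W_{i_0}^m$, and let $W=\pi_{i_0}^{-1}(W_{i_0})$; then $W^m=\pi_{i_0}^{-1}(W_{i_0}^m)$ does not contain $g$ since its $i_0$-th coordinate is $g_{i_0}\notin W_{i_0}^m$. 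Again the $\infty$ version is the same with an extra universal quantifier over $n$. For (4), let $f\colon G\to H$ be a continuous isomorphism with $H$ a $T_1$ $m$-semitopological group. Given $g\in G$ with $g\neq e$ — note that in the $T_1$ target the hypothesis $e\notin\overline{\{g\}}$ is automatic once $g\neq e$, and we only need to rule out $g=e$ — we have $f(g)\neq e_H$, and since $H$ is $T_1$, $e_H\notin\overline{\{f(g)\}}$, so there is $W_H\in\mathcal{N}_{e_H}$ with $f(g)\notin W_H^m$. By continuity of $f$, $W=f^{-1}(W_H)\in\mathcal{N}_e$, and since $f$ is a homomorphism $f(W^m)=f(W)^m\subseteq W_H^m$, so $f(g)\notin f(W^m)$, forcing $g\notin W^m$. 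The only subtlety in (4) is matching the hypothesis $e\notin\overline{\{g\}}$ in $G$ with what we actually use: we should check that $g\neq e$ in $G$ suffices, which it does because $f$ injective gives $f(g)\neq e_H$ and the $T_1$-ness of $H$ then does the work; in fact we don't even need $e\notin\overline{\{g\}}$ in $G$, only $g\neq e$, so this strengthens what is needed.

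I expect no serious obstacle here; the statement is a routine omnibus "permanence" lemma. The one point demanding a little care is the bookkeeping of the quantifiers in the $m=\infty$ case (one must observe that the witness neighborhood $W$ produced in each argument can be taken to depend on $n$, so that running the finite-$m$ argument for each $n\in\mathbb{N}$ yields exactly the $\infty$-semitopological condition), together with the elementary fact used repeatedly that $e\notin\overline{\{g\}}$ passes correctly between a group, its subgroups, its products, and continuous images — which I would justify by the continuity of the relevant projections/inclusions/maps sending $e\mapsto e$ and $g\mapsto$ (its image).
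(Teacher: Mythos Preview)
Your proof is correct and follows essentially the same approach as the paper (which in fact declares (1) and (2) ``obvious'' and gives exactly your arguments for (3) and (4)). Two minor slips worth cleaning up: in the parenthetical of (1) you write $g\notin\overline{\{e\}}$ when you mean $e\notin\overline{\{g\}}$, and in the parenthetical of (3) you swap the roles of $e$ and $g$ in the closure relation; neither affects the main line of the argument.
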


\begin{proof}
Obviously, (1) and (2) hold.

(3) First, we consider $m\in\mathbb{N}^{\ast}\setminus\{\infty\}$. Let $\{G_{\alpha}: \alpha\in A\}$ be a family of $m$-semitopological groups such that $G=\Pi_{\alpha\in A}G_{\alpha}$. Take any $g=(g_{\alpha})_{\alpha\in A}$ with $e\not\in \overline{\{g\}}$. It is obvious that there exists $\beta\in A$ such that $e_{\beta}\not\in \overline{\{g_{\beta}\}}$, then there exists an open neighborhood $U_{\beta}$ of $e_{\beta}$ in $G_{\beta}$ such that $g_{\alpha}\not\in U_{\beta}^{m}$. Put $U=U_{\beta}\times \Pi_{\alpha\in A\setminus\{\beta\}}G_{\alpha}$. Then $g\not\in U^{m}$. The proof of the case of $m=\infty$ is similar.

(4) First, we consider $m\in\mathbb{N}^{\ast}\setminus\{\infty\}$. Suppose that $\phi: G\rightarrow H$ is a continuous isomorphism of the group $G$ onto a $T_{1}$ $m$-semitopological group $H$. Take any $g\neq e$ in $G$. Then there exists an open neighborhood $W$ of the neutral element in $H$ such that $\phi(g)\not\in W^{m}$. Put $V=\phi^{-1}(W)$. Hence $g\not\in V^{m}$. The proof of the case of $m=\infty$ is similar.
\end{proof}

Let $G$ be a group and any integer number $m\geq 2$. We denote $$S_{G}^{m}=\{(x_{1}, \ldots, x_{m})\in G^{m}: x_{1}\cdot\ldots \cdot x_{m}=e\}, E_{G}^{m}=\bigcap_{U\in\mathcal{N}_{e}}\overline{(U^{-1})^{m-1}}.$$ The following theorem gives some characterizations of $m$-semitopological groups for each $m\in\mathbb{N}^{\ast}\setminus\{1, \infty\}$.

\begin{theorem}\label{t1}
Let $G$ be a semitopological group and $m\in\mathbb{N}^{\ast}\setminus\{1, \infty\}$. Then we have
\begin{enumerate}
\smallskip
\item $$\overline{\{e\}}\subset E_{G}^{m}=\bigcap_{U\in\mathcal{N}_{e}}(U^{-1})^{m};$$

\smallskip
\item $G$ is an $m$-semitopological group if and only if $E_{G}^{m}=\overline{\{e\}}$;

\smallskip
\item $\overline{S_{G}^{m}}=\mathfrak{m}^{-1}(E_{G}^{m})$, where $\mathfrak{m}$ is the multiplication in the group $G$;

\smallskip
\item the following statements are equivalent:

\smallskip
(i) $G$ is a $T_{1}$ $m$-semitopological group;

\smallskip
(ii) $E_{G}^{m}=\{e\}$;

\smallskip
(iii) $S_{G}^{m}$ is closed in $G^{m}$.
\end{enumerate}
\end{theorem}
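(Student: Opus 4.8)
The plan is to prove the four items essentially in the stated order, using (1) as a reformulation tool, treating (3) as the technical core, and deducing (4) formally from (1)--(3). The only general facts I would use about a semitopological group $G$ are that every left translation $y\mapsto xy$ and every conjugation $y\mapsto x^{-1}yx$ is a homeomorphism of $G$, so that $\{xW:W\in\mathcal{N}_{e}\}$ is a neighbourhood base at $x$ and conjugation by a fixed element permutes $\mathcal{N}_{e}$. For (1), I would first isolate the elementary remark that $\overline{A}\subseteq AW^{-1}$ for every $A\subseteq G$ and every $W\in\mathcal{N}_{e}$ (if $x\in\overline{A}$ then $xW$ meets $A$, so $x\in Aw^{-1}$ for some $w\in W$). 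Applying this with $A=(U^{-1})^{m-1}$ and $W=U$ gives $\overline{(U^{-1})^{m-1}}\subseteq(U^{-1})^{m}$, hence $E_{G}^{m}\subseteq\bigcap_{U}(U^{-1})^{m}$. Conversely, given $U,W\in\mathcal{N}_{e}$, put $V=U\cap W$; if $x\in(V^{-1})^{m}$, write $x=v_{1}^{-1}\cdots v_{m}^{-1}$ with $v_{i}\in V$, so $xv_{m}\in(U^{-1})^{m-1}\cap xW$, whence $x\in\overline{(U^{-1})^{m-1}}$; this yields the reverse inclusion. Finally $\overline{\{e\}}\subseteq E_{G}^{m}$ is immediate, since $e\in(U^{-1})^{m-1}$ for every $U$, so $\overline{\{e\}}\subseteq\overline{(U^{-1})^{m-1}}$.

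For (2), I would record the two identities $\overline{\{e\}}=\bigcap_{U\in\mathcal{N}_{e}}U^{-1}=\big(\bigcap_{U}U\big)^{-1}$ and, using $(U^{-1})^{m}=(U^{m})^{-1}$ together with (1), $E_{G}^{m}=\big(\bigcap_{U}U^{m}\big)^{-1}$. Since $A\mapsto A^{-1}$ is a bijection of the power set, $E_{G}^{m}=\overline{\{e\}}$ is equivalent to $\bigcap_{U}U^{m}=\bigcap_{U}U$. On the other hand, $e\notin\overline{\{g\}}$ means exactly $g\notin\bigcap_{U}U$, so the definition of an $m$-semitopological group says precisely that $g\notin\bigcap_{U}U$ implies $g\notin\bigcap_{U}U^{m}$, i.e. $\bigcap_{U}U^{m}\subseteq\bigcap_{U}U$; the reverse inclusion is automatic (pad a product with copies of $e$). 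This proves (2).

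For (3), the obstacle is that $\mathfrak{m}\colon G^{m}\to G$ is merely separately continuous, so continuity of $\mathfrak{m}$ is unavailable. The device is the identity
\[ x_{1}w_{1}x_{2}w_{2}\cdots x_{m}w_{m}=(x_{1}\cdots x_{m})(c_{1}^{-1}w_{1}c_{1})(c_{2}^{-1}w_{2}c_{2})\cdots(c_{m-1}^{-1}w_{m-1}c_{m-1})w_{m}, \]
where $c_{j}=x_{j+1}\cdots x_{m}$, proved by an easy induction on $m$. Basic neighbourhoods of $(x_{1},\dots,x_{m})$ in $G^{m}$ are the sets $x_{1}W_{1}\times\cdots\times x_{m}W_{m}$ with $W_{i}\in\mathcal{N}_{e}$, and since conjugation by each fixed $c_{j}$ carries $\mathcal{N}_{e}$ bijectively onto itself, substituting in the identity shows that $(x_{1},\dots,x_{m})\in\overline{S_{G}^{m}}$ if and only if $g^{-1}\in V_{1}V_{2}\cdots V_{m}$ for all $V_{1},\dots,V_{m}\in\mathcal{N}_{e}$, where $g=x_{1}\cdots x_{m}$. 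That last condition is just $g^{-1}\in\bigcap_{U}U^{m}=(E_{G}^{m})^{-1}$, i.e. $g\in E_{G}^{m}$, i.e. $(x_{1},\dots,x_{m})\in\mathfrak{m}^{-1}(E_{G}^{m})$.

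Finally (4) is formal. Left translations being homeomorphisms, $G$ is $T_{1}$ iff $\overline{\{e\}}=\{e\}$, so by (1) and (2) the group $G$ is a $T_{1}$ $m$-semitopological group iff $E_{G}^{m}=\{e\}$; this is (i)$\Leftrightarrow$(ii). For (ii)$\Leftrightarrow$(iii), note $S_{G}^{m}=\mathfrak{m}^{-1}(\{e\})$, so by (3) the set $S_{G}^{m}$ is closed iff $\mathfrak{m}^{-1}(E_{G}^{m})=\mathfrak{m}^{-1}(\{e\})$; since $\mathfrak{m}$ is surjective and $e\in E_{G}^{m}$, this holds iff $E_{G}^{m}=\{e\}$. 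I expect step (3)---in particular pinning down the precise correspondence between the neighbourhoods before and after conjugation---to be the only point requiring genuine care; everything else is bookkeeping with the set operations $A\mapsto A^{-1}$ and $A\mapsto AW^{-1}$.
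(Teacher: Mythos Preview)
Your proof is correct and follows essentially the same route as the paper: part (1) by comparing $\overline{(U^{-1})^{m-1}}$ with $(U^{-1})^{m}$, part (2) by reducing the defining implication to an inclusion of intersections, part (3) via a conjugation identity turning $x_{1}W_{1}\cdots x_{m}W_{m}$ into $(x_{1}\cdots x_{m})V_{1}\cdots V_{m}$, and part (4) as a formal consequence. The only notable difference is presentational: you make the conjugation step in (3) fully explicit and give a self-contained argument for (1), whereas the paper cites \cite{RE} for the inclusion $\overline{\{e\}}\subset E_{G}^{m}$ and writes the passage from $e\in Ux_{1}\cdots Ux_{m}$ to $e\in U^{m}x_{1}\cdots x_{m}$ without spelling out the underlying conjugation.
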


\begin{proof}
(1) From \cite[Proposition 4]{RE}, it follows that $\overline{\{e\}}\subset E_{G}^{m}\subset\bigcap_{U\in\mathcal{N}_{e}}(U^{-1})^{m}$. Take any $g\in G\setminus E_{G}^{m}$. Then there exists $U\in \mathcal{N}_{e}$ such that $gU\cap (U^{-1})^{m-1}=\emptyset$, hence $g\not\in (U^{-1})^{m}$. Thus $E_{G}^{m}=\bigcap_{U\in\mathcal{N}_{e}}(U^{-1})^{m}.$

(2) Let $G$ be an $m$-semitopological group. Then $\overline{\{e\}}\subseteq E_{G}^{m}$ by (1). Take any $g\not\in \overline{\{e\}}$. Hence $e\not\in\overline{\{g^{-1}\}}$. Since $G$ is an $m$-semitopological group, there exists $U\in\mathcal{N}_{e}$ such that $g^{-1}\not\in U^{m}$, hence $g\not\in (U^{-1})^{m}\supseteq E_{G}^{m}$. Therefore, $E_{G}^{m}\subseteq\overline{\{e\}}$.

Now suppose $E_{G}^{m}=\overline{\{e\}}$. Take any $g\neq e$ with $e\not\in\overline{\{g\}}$. Then $g^{-1}\not\in\overline{\{e\}}=E_{G}^{m}$. From (1), it follows that there exists $U\in \mathcal{N}_{e}$ such that $g^{-1}\not\in (U^{-1})^{m}$. Then $g\not\in U^{m}$.

(3) Let $(x_{1}, \ldots, x_{m})\in G^{m}$. Clearly, we have $$(x_{1}, \ldots, x_{m})\in\overline{S_{G}^{m}}\Leftrightarrow (Ux_{1}\times \ldots \times Ux_{m})\cap S_{G}^{m}\neq\emptyset$$ for any $U\in\mathcal{N}_{e},$ that is $e\in Ux_{1}\ldots Ux_{m}$ for any $U\in\mathcal{N}_{e}$. Hence $$(x_{1}, \ldots, x_{m})\in\overline{S_{G}^{m}}\Leftrightarrow e\in x_{1}\ldots x_{m}U^{m}$$ for any $U\in\mathcal{N}_{e}$, then $$(x_{1}, \ldots, x_{m})\in\overline{S_{G}^{m}}\Leftrightarrow x_{1}\ldots x_{m}\in (U^{-1})^{m}$$ for any $U\in\mathcal{N}_{e}$. By (1), we have $(x_{1}, \ldots, x_{m})\in\overline{S_{G}^{m}}\Leftrightarrow x_{1}\ldots x_{m}\in E_{G}^{m}$.

(4) From (2), it follows that (i) $\Leftrightarrow$ (ii). (ii) $\Leftrightarrow$ (iii) since $S_{G}^{m}=\mathfrak{m}^{-1}(e)$.
\end{proof}

By Theorem~\ref{t1} and the definition of $\infty$-semitopological group, we have the following theorem.

\begin{theorem}
Let $G$ be a semitopological group. Then we have
\begin{enumerate}
\smallskip
\item $G$ is an $\infty$-semitopological group if and only if $E_{G}^{m}=\overline{\{e\}}$ for each $m\in\mathbb{N}$;

\smallskip
\item $\overline{S_{G}^{m}}=\mathfrak{m}^{-1}(E_{G}^{m})$ for each $m\in\mathbb{N}$;

\smallskip
\item the following statements are equivalent:

\smallskip
(i) $G$ is a $T_{1}$ $\infty$-semitopological group;

\smallskip
(ii) $E_{G}^{m}=\{e\}$ for each $m\in\mathbb{N}$;

\smallskip
(iii) $S_{G}^{m}$ is closed in $G^{m}$ for each $m\in\mathbb{N}$.
\end{enumerate}
\end{theorem}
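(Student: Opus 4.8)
The plan is to deduce the whole statement from Theorem~\ref{t1} by unwinding the definition of an $\infty$-semitopological group. The decisive observation is that, by definition, $G$ is an $\infty$-semitopological group if and only if $G$ is an $m$-semitopological group for \emph{every} $m\in\mathbb{N}$. The index $m=1$ carries no information here: if $e\notin\overline{\{g\}}$ then already $g\notin W$ for some $W\in\mathcal{N}_{e}$, so every semitopological group is a $1$-semitopological group, and with the evident conventions ($A^{0}=\{e\}$, and $\mathfrak{m}$ acting on a one-fold product being the identity map) one has $E_{G}^{1}=\overline{\{e\}}$ and $S_{G}^{1}=\{e\}$. Consequently each of the three clauses to be proved, when read over $m\in\mathbb{N}$, is equivalent to its restriction to $m\in\mathbb{N}\setminus\{1\}$, and on that range Theorem~\ref{t1} is available verbatim.

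For (1), I would combine the definition just recalled with Theorem~\ref{t1}(2): $G$ is $m$-semitopological iff $E_{G}^{m}=\overline{\{e\}}$ for every $m\geq 2$, hence $G$ is $\infty$-semitopological iff $E_{G}^{m}=\overline{\{e\}}$ for every $m\geq 2$, which is the same as requiring it for every $m\in\mathbb{N}$ since the $m=1$ instance holds trivially. For (2), the case $m\geq 2$ is exactly Theorem~\ref{t1}(3), while for $m=1$ one notes $\mathfrak{m}^{-1}(E_{G}^{1})=E_{G}^{1}=\overline{\{e\}}=\overline{S_{G}^{1}}$; so $\overline{S_{G}^{m}}=\mathfrak{m}^{-1}(E_{G}^{m})$ holds for all $m\in\mathbb{N}$.

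For (3), the equivalence (ii)$\Leftrightarrow$(iii) is obtained by applying the equivalence (ii)$\Leftrightarrow$(iii) of Theorem~\ref{t1}(4) for each $m\geq 2$, supplemented by the trivial check that $S_{G}^{1}=\{e\}$ is closed iff $\overline{\{e\}}=\{e\}$ iff $E_{G}^{1}=\{e\}$. For (i)$\Leftrightarrow$(ii): if $G$ is a $T_{1}$ $\infty$-semitopological group, then $\overline{\{e\}}=\{e\}$ and part (1) gives $E_{G}^{m}=\overline{\{e\}}=\{e\}$ for every $m$; conversely, if $E_{G}^{m}=\{e\}$ for every $m\in\mathbb{N}$, then taking $m=2$ together with the inclusion $\overline{\{e\}}\subset E_{G}^{2}$ from Theorem~\ref{t1}(1) forces $\overline{\{e\}}=\{e\}$, so $G$ is $T_{1}$, and then part (1) shows $G$ is $\infty$-semitopological. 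There is essentially no obstacle in this argument; the only point demanding care is purely bookkeeping, namely that Theorem~\ref{t1} is stated for $m\in\mathbb{N}^{\ast}\setminus\{1,\infty\}$, so one must explicitly dispose of the degenerate index $m=1$ and fix the above conventions to make the $m=1$ instances of (1)–(3) literally correct.
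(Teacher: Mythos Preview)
Your proposal is correct and follows exactly the route the paper takes: the paper states that this theorem is obtained ``by Theorem~\ref{t1} and the definition of $\infty$-semitopological group'' and gives no further argument. You have simply written out the deduction in detail, and your extra care with the degenerate index $m=1$ (where Theorem~\ref{t1} does not formally apply) is a harmless refinement the paper leaves implicit.
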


Suppose that $X$ and $Y$ are spaces. We say that the mapping $f: X\rightarrow Y$ is {\it topology-preserving} if the following conditions are satisfied:

\smallskip
(1) $f$ is surjective, continuous, open and closed;

\smallskip
(2) a subset $U$ of $X$ is open if and only if $U=f^{-1}(f(U))$ and $f(U)$ is open.

The following proposition shows that the topology-preserving mappings can preserve and inversely preserve for the class of $m$-semitopological groups, where $m\in\mathbb{N}^{\ast}$.

\begin{proposition}
Let $G$ and $H$ be two semitopological groups, and let $\phi: G\rightarrow H$ be a topology-preserving homomorphism. Then $G$ is an $m$-semitopological group if and only if $H$ is an $m$-semitopological group, where $m\in\mathbb{N}^{\ast}$.
\end{proposition}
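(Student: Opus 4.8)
The plan is to extract from the definition of a topology-preserving homomorphism the structural fact that $G$ carries the initial topology induced by $\phi$, and then to transfer the defining condition of an $m$-semitopological group across $\phi$ in both directions. It suffices to treat $m\in\mathbb{N}\setminus\{1\}$: the case $m=1$ is trivial since every semitopological group is $1$-semitopological, and the case $m=\infty$ follows immediately, because being $\infty$-semitopological means being $n$-semitopological for every $n\in\mathbb{N}$. Write $e_{G}$, $e_{H}$ for the neutral elements and $N=\ker\phi$, a normal subgroup of $G$.

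The first step is to unpack condition (2) in the definition of a topology-preserving map. If $U\subseteq G$ is open then $U=\phi^{-1}(\phi(U))$ and $\phi(U)$ is open; since $N$ is normal the first condition is equivalent to $UN=NU=U$, so every open set, and in particular every open neighbourhood of $e_{G}$, is $N$-saturated (and hence contains $N$). Together with surjectivity this shows that $U\subseteq G$ is open if and only if $U=\phi^{-1}(V)$ for some open $V\subseteq H$, i.e. $G$ has the initial topology determined by $\phi$. From this I would derive the identity $\overline{\{g\}}=\phi^{-1}\big(\overline{\{\phi(g)\}}\big)$ for every $g\in G$: the inclusion $\subseteq$ is continuity of $\phi$, while for $\supseteq$, given $x$ with $\phi(x)\in\overline{\{\phi(g)\}}$ and an open $U\ni x$, the set $\phi(U)$ is an open neighbourhood of $\phi(x)$, so $\phi(g)\in\phi(U)$ and thus $g\in\phi^{-1}(\phi(U))=U$. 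In particular $e_{G}\notin\overline{\{g\}}$ if and only if $e_{H}\notin\overline{\{\phi(g)\}}$.

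The second step is the multiplicative bookkeeping. If $W\subseteq G$ is open then, being $N$-saturated, so is $W^{m}$, whence $\phi^{-1}\big(\phi(W)^{m}\big)=\phi^{-1}\big(\phi(W^{m})\big)=W^{m}$, using $\phi(W^{m})=\phi(W)^{m}$ for the homomorphism $\phi$. Dually, if $V\subseteq H$ is open then $\phi^{-1}(V)$ is open and $N$-saturated with $\phi(\phi^{-1}(V))=V$, so $(\phi^{-1}(V))^{m}=\phi^{-1}(V^{m})$. These are routine given that $\phi$ is a surjective homomorphism with $N$ contained in every open neighbourhood of $e_{G}$.

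With these in hand the two implications are short. If $G$ is $m$-semitopological and $h\in H$ satisfies $e_{H}\notin\overline{\{h\}}$, choose $g\in\phi^{-1}(h)$; then $e_{G}\notin\overline{\{g\}}$, so there is an open $W\ni e_{G}$ with $g\notin W^{m}$, and then $\phi(W)$ is an open neighbourhood of $e_{H}$ with $h\notin\phi(W)^{m}$, for otherwise $g\in\phi^{-1}(\phi(W)^{m})=W^{m}$. Conversely, if $H$ is $m$-semitopological and $g\in G$ satisfies $e_{G}\notin\overline{\{g\}}$, then $e_{H}\notin\overline{\{\phi(g)\}}$, so there is an open $V\ni e_{H}$ with $\phi(g)\notin V^{m}$; putting $W=\phi^{-1}(V)$, an open neighbourhood of $e_{G}$, we get $g\notin W^{m}$, since $g\in W^{m}$ would force $\phi(g)\in\phi(W^{m})=V^{m}$. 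The only real obstacle is the first step: recognising that condition (2) is precisely what forces every open set in $G$ to be $N$-saturated, so that $G$ is the initial-topology lift of $H$ and closures, neighbourhoods and powers all pass back and forth along $\phi$ without loss; once this is in place the rest is elementary.
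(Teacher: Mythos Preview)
Your proof is correct and follows essentially the same route as the paper: both directions hinge on the correspondence $e_{G}\notin\overline{\{g\}}\Leftrightarrow e_{H}\notin\overline{\{\phi(g)\}}$ together with $\phi^{-1}(\phi(W)^{m})=W^{m}$ and $(\phi^{-1}(V))^{m}=\phi^{-1}(V^{m})$ for open $W,V$. You are more explicit than the paper in deriving the initial-topology description and the $N$-saturation of open sets and their powers, whereas the paper cites an external reference for the closure fact and phrases the saturation step as ``$\phi^{-1}(h)$ is antidiscrete''.
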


\begin{proof}
We divide the proof into the following two cases.

\smallskip
{\bf Case 1} $m\in\mathbb{N}^{\ast}\setminus\{\infty\}$.

\smallskip
Assume that $G$ is an $m$-semitopological group. Take any $h\neq e_{H}$ and $e_{H}\not\in\overline{\{h\}}$ in $H$. Then there exists $g\in G$ such that $\phi(g)=h$. Clearly, $e_{G}\not\in\overline{\{g\}}$ in $G$ since $e_{H}\not\in\overline{\{h\}}$ and $\phi$ is a topology-preserving mapping. Since $G$ is an $m$-semitopological group, there exists an open neighborhood $U$ of $e_{G}$ such that $g\not\in U^{m}$. We claim that $h\not\in (\phi(U))^{m}$. Indeed, suppose $h\in (\phi(U))^{m}$. Then $\phi^{-1}(h)\cap \phi^{-1}((\phi(U))^{m})\neq\emptyset$. Since $\phi^{-1}((\phi(U))^{m})=U^{m}$, it follows that $U^{m}\cap \phi^{-1}(h)\neq\emptyset$, then $\phi^{-1}(h)\subset U^{m}$ since $\phi^{-1}(h)$ is antidiscrete. Hence $g\in U^{m}$, which is a contradiction. Therefore, $h\not\in (\phi(U))^{m}$. Thus $H$ is an $m$-semitopological group.

Assume that $H$ is an $m$-semitopological group. Take any $g\neq e_{G}$ and $e_{G}\not\in\overline{\{g\}}$ in $G$. Since $\phi$ is a topology-preserving mapping, it follows from \cite[Proposition 1]{QL} that $e_{H}\not\in\overline{\{\phi(g)\}}$, hence there exists an open neighborhood $V$ of $e_{H}$ in $H$ such that $\phi(g)\not\in V^{m}$. Then $\phi^{-1}(\phi(g))\cap \phi^{-1}(V^{m})=\emptyset$, hence $g\not\in (\phi^{-1}(V))^{m}$. Therefore, $G$ is an $m$-semitopological group.

\smallskip
{\bf Case 2} $m=\infty$.

\smallskip
The proof is similar to Case 1.
\end{proof}

Finally, we consider the topological direct limit of $m$-semitopological groups, $m\in \mathbb{N}^{\ast}$. First, we recall the following concept.

\begin{definition}
Given a tower $$X_{0}\subset X_{1}\subset X_{2}\subset \ldots\subset X_{n}\subset\ldots$$ of spaces, the union $X=\bigcup_{n\in\mathbb{N}}X_{n}$ endowed with the strongest topology making each inclusion map $X_{n}\rightarrow X$ continuous is called the {\it topological direct limit} of the tower $(X_{n})_{n\in\mathbb{N}}$ and is denoted by $\underrightarrow{\lim}X_{n}$.
\end{definition}

Let $\{G_{n}: n\in\mathbb{N}\}$ be a tower of semitopological groups. From \cite[Proposition 1.1]{TSH} that $G=\underrightarrow{\lim}G_{n}$ is a semitopological group. Moreover, if each $G_{n}$ is a quasitopological group, then $G$ is a quasitopological group by \cite[Proposition 1.1]{TSH} again. However, there exists a tower $\{G_{n}: n\in\mathbb{N}\}$ of topological groups such that $G$ is not a paratopological group, see \cite[Example 1.2]{TSH}. Therefore, we have the following question.

\begin{question}
Let $\{G_{n}: n\in\mathbb{N}\}$ be a tower of $m$-semitopological groups (resp., $\infty$-semitopological groups), where $m\geq 2$. Is $G=\underrightarrow{\lim}G_{n}$ an $m$-semitopological group (resp., $\infty$-semitopological group)?
\end{question}

The following two results are obvious.

\begin{theorem}
Let $\{H_{n}: n\in\mathbb{N}\}$ be a sequence of $m$-semitopological groups (resp., $\infty$-semitopological groups), where $m\geq 2$. Then both the $\sigma$-product and $\Sigma$-product of $\prod_{i\in\mathbb{N}}H_{n}$ are $m$-semitopological groups (resp., $\infty$-semitopological groups).
\end{theorem}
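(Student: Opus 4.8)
The plan is to reduce both assertions to facts already proved, namely that a product of $m$-semitopological groups is an $m$-semitopological group and that a subgroup of an $m$-semitopological group is an $m$-semitopological group (these are items (3) and (2) of the theorem immediately preceding Theorem~\ref{t1}). Here, as usual, the $\sigma$-product $\sigma_{n\in\mathbb{N}}H_{n}$ and the $\Sigma$-product $\Sigma_{n\in\mathbb{N}}H_{n}$ are taken inside $\prod_{n\in\mathbb{N}}H_{n}$ with base points the neutral elements $e_{n}$, the former consisting of the points with finite support and the latter of the points with at most countable support, each carrying the subspace topology.

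First I would observe that, since the index set $\mathbb{N}$ is countable, every point of $\prod_{n\in\mathbb{N}}H_{n}$ automatically has support of size at most $\aleph_{0}$; hence $\Sigma_{n\in\mathbb{N}}H_{n}$ coincides, as a topological group, with the full product $\prod_{n\in\mathbb{N}}H_{n}$, which is an $m$-semitopological group (resp. an $\infty$-semitopological group) by the product theorem. Next, $\sigma_{n\in\mathbb{N}}H_{n}$ is a subgroup of $\prod_{n\in\mathbb{N}}H_{n}$ with the inherited topology; since a subgroup of a semitopological group is again a semitopological group, $\sigma_{n\in\mathbb{N}}H_{n}$ is a subgroup of an $m$-semitopological group (resp. $\infty$-semitopological group), and therefore, by the subgroup theorem, it is itself an $m$-semitopological group (resp. $\infty$-semitopological group). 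The $\infty$-semitopological case requires no separate treatment, because being $\infty$-semitopological means being $m$-semitopological for every $m\in\mathbb{N}$, so both reductions apply for each $m$ simultaneously.

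There is essentially no obstacle here; the only point that deserves a word of care is the identification of the $\Sigma$-product over the countable index set $\mathbb{N}$ with the full Tychonoff product, after which everything is immediate from the cited results. If one wishes to avoid invoking that identification, one can instead copy the argument used for products directly: given $g=(g_{n})_{n\in\mathbb{N}}$ in the $\Sigma$-product (or $\sigma$-product) with $e\notin\overline{\{g\}}$, choose a coordinate $\beta$ with $e_{\beta}\notin\overline{\{g_{\beta}\}}$, pick $U_{\beta}\in\mathcal{N}_{e_{\beta}}$ with $g_{\beta}\notin U_{\beta}^{m}$, and take $U$ to be the trace on the $\Sigma$-product (resp. $\sigma$-product) of the standard subbasic neighborhood of $e$ determined by $U_{\beta}$ in the $\beta$-th coordinate; then $g\notin U^{m}$, and the case $m=\infty$ follows by letting $m$ range over $\mathbb{N}$.
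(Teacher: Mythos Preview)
Your proposal is correct and matches the paper's approach: the paper simply declares the result ``obvious'' and offers no proof, clearly intending the reader to invoke exactly the product and subgroup closure properties (items (3) and (2) of the preceding theorem) that you spell out. Your additional remark that the $\Sigma$-product over the countable index set $\mathbb{N}$ coincides with the full product is a helpful clarification the paper omits.
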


\begin{corollary}
Let $\{H_{n}: n\in\mathbb{N}\}$ be a sequence of $m$-semitopological groups (resp., $\infty$-semitopological groups), where $m\geq 2$. Then $G=\underrightarrow{\lim}G_{n}$ is an $m$-semitopological groups (resp., $\infty$-semitopological groups), where $G_{n}=\prod_{i\leq n}H_{i}$ and each $G_{n}$ is identified as a subspace of $G_{n+1}$ for each $n\in\mathbb{N}$.
\end{corollary}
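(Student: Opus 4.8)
The plan is to reduce the statement to the already established fact that a finite product of $m$-semitopological groups (resp.\ $\infty$-semitopological groups) is again of that type, by exploiting the canonical retractions of the direct limit onto its stages.

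First I would record that each $G_{n}=\prod_{i\leq n}H_{i}$ is a finite product of $m$-semitopological groups (resp.\ $\infty$-semitopological groups), hence is itself $m$-semitopological (resp.\ $\infty$-semitopological) by the theorem above on products of $m$-semitopological groups; moreover, since each $G_{n}$ is a semitopological group, $G=\underrightarrow{\lim}G_{n}$ is a semitopological group by \cite[Proposition 1.1]{TSH}. Next, for each $N\in\mathbb{N}$ I would consider the coordinate projection $\pi_{N}\colon G\to G_{N}$ sending $x\in G_{n}$ (with $n\geq N$) to the tuple of its first $N$ coordinates. These maps are compatible with the bonding inclusions $G_{n}\hookrightarrow G_{n+1}$, so $\pi_{N}$ is a well-defined group homomorphism with $\pi_{N}(g)=g$ for every $g\in G_{N}$; and $\pi_{N}$ is continuous, because its restriction to each $G_{k}$ is continuous (it is the inclusion $G_{k}\hookrightarrow G_{N}$ when $k\leq N$, and the coordinate projection $G_{k}\to G_{N}$ when $k>N$), so continuity follows from the defining property of the direct-limit topology. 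Finally, by the definition of that topology the inclusion $G_{N}\hookrightarrow G$ is continuous, whence $\overline{\{g\}}^{G_{N}}\subseteq\overline{\{g\}}^{G}$ for every $g\in G_{N}$.

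Then I would fix $g\in G$ with $e\notin\overline{\{g\}}^{G}$ and choose $N$ with $g\in G_{N}$. By the inclusion of closures just noted, $e\notin\overline{\{g\}}^{G_{N}}$, so, $G_{N}$ being $m$-semitopological, there is an open neighbourhood $W_{N}$ of $e$ in $G_{N}$ with $g\notin W_{N}^{m}$. Put $W=\pi_{N}^{-1}(W_{N})$; this is an open neighbourhood of $e$ in $G$. Since $\pi_{N}$ is a homomorphism, $W^{m}=\bigl(\pi_{N}^{-1}(W_{N})\bigr)^{m}\subseteq\pi_{N}^{-1}(W_{N}^{m})$, and as $\pi_{N}(g)=g\notin W_{N}^{m}$ we conclude $g\notin W^{m}$. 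Hence $G$ is $m$-semitopological. For the $\infty$-version one runs the identical argument for every exponent $m\in\mathbb{N}$, noting that the same $N$ works for all $m$ once $g$ is fixed.

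The one structural ingredient is the continuity of the retractions $\pi_{N}$, which is immediate from the universal property of $\underrightarrow{\lim}$; everything else is a direct computation with $\pi_{N}^{-1}(W_{N})$, so I expect no genuine obstacle and the result is indeed ``obvious'' as claimed.
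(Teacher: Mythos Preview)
Your proof is correct. The paper supplies no argument for this corollary (nor for the preceding theorem on $\sigma$- and $\Sigma$-products), declaring both ``obvious'', so there is nothing to compare against directly. Your retraction argument via the continuous homomorphisms $\pi_{N}\colon G\to G_{N}$ is the natural way to make the claim precise and is almost certainly what the authors had in mind: the availability of these projections is exactly what distinguishes this particular tower from a general tower of $m$-semitopological groups, for which the analogous statement is left open as a question just before the theorem. One could streamline slightly by projecting onto a single factor $H_{j}$ rather than the whole block $G_{N}$ (once $e\notin\overline{\{g\}}^{G_{N}}$ in the finite product, a box-basis argument yields some $j\leq N$ with $e_{j}\notin\overline{\{g_{j}\}}^{H_{j}}$), which also makes the parallel with the $\sigma$-product case explicit; but this is purely cosmetic and your version is complete as written.
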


For closing this section, we give the following proposition.

\begin{proposition}\label{p0}
Let $\{G_{n}: n\in\mathbb{N}\}$ be a tower of semitopological groups. If each $G_{n}$ is $T_{1}$, then $G=\underrightarrow{\lim}G_{n}$ is $T_{1}$.
\end{proposition}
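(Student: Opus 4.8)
The plan is to show directly that $\overline{\{e_G\}} = \{e_G\}$, i.e. that every singleton in $G = \underrightarrow{\lim}G_n$ is closed, which for a homogeneous (in the weak sense: translations are homeomorphisms) space like a semitopological group is equivalent to $T_1$. Since $G$ is a semitopological group by \cite[Proposition 1.1]{TSH}, it suffices to prove $\{e_G\}$ is closed, because then for any $g \in G$ the left translation by $g$ carries $\{e_G\}$ to $\{g\}$. So fix $x \in G \setminus \{e_G\}$; I want a neighborhood of $x$ missing $e_G$, equivalently (translating) a neighborhood $U$ of $e_G$ with $x^{-1} \notin U$.

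The key point is the defining property of the direct limit topology: a set $F \subseteq G$ is closed if and only if $F \cap G_n$ is closed in $G_n$ for every $n$. So the natural strategy is to build the desired closed set, or rather to build an open neighborhood, stage by stage. First I would locate the smallest $n_0$ with $x \in G_{n_0}$. Since $G_{n_0}$ is $T_1$, the singleton $\{e_{G_{n_0}}\} = \{e_G\}$ is closed in $G_{n_0}$, so there is an open set $V_{n_0}$ in $G_{n_0}$ containing $e_G$ but not $x$. Now I would inductively extend: having an open $V_k \ni e_G$ in $G_k$ with $x \notin V_k$ (for $k \geq n_0$), I use that $G_k$ sits as a subspace of $G_{k+1}$ — wait, the inclusion $G_k \hookrightarrow G_{k+1}$ need not be an embedding in general, so instead I use that $\{e_G\}$ is closed in $G_{k+1}$ (as $G_{k+1}$ is $T_1$) together with the separate continuity of multiplication to find an open $V_{k+1} \ni e_G$ in $G_{k+1}$ with $x \notin V_{k+1}$, after possibly shrinking so that things remain coherent. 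Actually the cleanest route avoids coherence bookkeeping entirely: just observe that $\{e_G\} \cap G_n = \{e_G\}$ is closed in $G_n$ for every $n \geq n_0$ (by $T_1$-ness of each $G_n$) and $\{e_G\} \cap G_n = \emptyset$ is trivially closed for $n < n_0$; hence $\{e_G\}$ is closed in $G$ by the characterization of the direct limit topology, and we are done.

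The main obstacle — and the reason the statement is stated separately rather than folded into one of the "obvious" results — is the subtlety that inclusions $G_n \hookrightarrow G_{n+1}$ in a tower of topological or semitopological groups are typically required to be topological embeddings; one must check that "$T_1$ for each $G_n$" transfers to "the point $e_G$ is closed in each $G_n$ when $G_n$ is viewed inside $G$." This is fine because $G_n$ carries its own topology and $e_G \in G_n$ for all large $n$, so $\{e_G\}$ is genuinely a closed subset of each $G_n$; the only thing to be careful about is that the direct limit topology on $G$ restricted back to $G_n$ may be strictly coarser than the given topology of $G_n$, but that only helps — a set closed in the finer topology of $G_n$ need not be closed in the subspace topology from $G$, which is why we cannot argue via subspaces and must instead invoke the defining universal property: $F$ closed in $\underrightarrow{\lim}G_n$ iff $F \cap G_n$ closed in $(G_n, \tau_{G_n})$ for all $n$. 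Granting that, the proof is a one-line verification.
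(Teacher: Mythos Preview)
Your argument is correct and is essentially the paper's one-line proof: $\{e\}$ is closed in each $G_n$ by $T_1$, hence closed in the direct limit by the defining property of the topology, hence $G$ is $T_1$. One small correction: the neutral element $e_G$ lies in every $G_n$ (each is a subgroup), so your case split on $n_0$ is unnecessary and the claim that $\{e_G\}\cap G_n=\emptyset$ for $n<n_0$ is false---though harmless, since $\{e_G\}$ is still closed in those $G_n$.
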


\begin{proof}
It suffices to prove that $\{e\}$ is closed in $G$. Since each $G_{n}$ is $T_{1}$, it follows that $\{e\}$ is closed in each $G_{n}$. Therefore, $\{e\}$ is closed in $G$.
\end{proof}

\maketitle
\section{generalized metric properties of $n$-semitopological groups}
In this section, we mainly discuss some generalized metric properties of $n$-semitopological groups, such as, weakly first-countable, semi-metrizable,  symmetrizable and etc. First, we recall a concept.

\begin{definition}
Let $\mathscr{P}=\bigcup_{x\in X}\mathscr{P}_{x}$ be a cover of a
space $X$ such that for each $x\in X$, (a) if $U,V\in
\mathscr{P}_{x}$, then $W\subset U\cap V$ for some $W\in
\mathscr{P}_{x}$; (b) the family $\mathscr{P}_{x}$ is a network of $x$ in $X$,
i.e., $x\in\bigcap\mathscr{P}_x$, and if $x\in U$ with $U$ open in
$X$, then $P\subset U$ for some $P\in\mathscr P_x$.
The family $\mathscr{P}$ is called a {\it weak base} for $X$ \cite{Ar} if, for every $A\subset X$, the set $A$ is open in $X$ whenever for each $x\in A$ there exists $P\in
\mathscr{P}_{x}$ such that $P\subset A$.
The space $X$ is {\it weakly first-countable} if $\mathscr{P}_{x}$ is countable for each
$x\in X$.
\end{definition}

From \cite{NP}, it follows that all weakly first-countable paratopological groups are first-countable; moreover, there exists a Hausdorff weakly first-countable quasitopological group is not first-countable \cite[Example 2.1]{LC}. Therefore, we have the following question.

\begin{question}
Let $G$ be an $n$-semitopological group (resp., $\infty$-semitopological group), where $n\geq 2$. If $G$ is weakly first-countable, when is $G$ a first-countable space?
\end{question}

Let us recall that a function $d: X\times X\rightarrow [0, +\infty)$ on a set $X$ is a {\it symmetric} if for every points $x, y$ the following two conditions are satisfied: (1) $d(x, y)=0$ if and only if $x=y$; (2) $d(x, y)=(d(y, x)$. Fro each $x\in X$ and $\varepsilon>0$, denote by $B(x, \varepsilon)=\{y\in X: d(x, y)<\varepsilon\}$. Then

$\bullet$ a space $X$ is {\it symetrizable} if there is a symmetric $d$ on $X$ such that $U\subset X$ is open if and only if for each $x\in U$, there exists $\varepsilon>0$ with $B(x, \varepsilon)\subset U$;

$\bullet$ a space $X$ is {\it semi-metrizable} if there is a symmetric $d$ on $X$ such that for each $x\in X$, the family $\{B(x, \varepsilon): \varepsilon>0\}$ forms a neighborhood base at $x$;

$\bullet$ a space $X$ is called a {\it sub-symmetrizable space} if it admits a coarser symmetrizable topology;

$\bullet$ a space $X$ is called a {\it subsemi-metrizable space} if it admits a coarser semi-metrizable topology.

Every symmetrizable space is weakly first-countable, and a space is semi-metrizable if and only if it is first-countable and symmetrizable, see \cite{Ar}.

\begin{theorem}\label{t4}
Let $(G, \sigma)$ be a $T_{1}$ weakly first-countable semitopological group. Then $(G, \sigma)$ is sub-symmetrizable.
\end{theorem}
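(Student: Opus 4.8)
The plan is to build a coarser symmetrizable topology on $G$ directly from a countable weak base at the neutral element, using the group translations to spread it around. First I would fix a countable weak base $\{W_n : n\in\mathbb{N}\}$ at $e$ in $(G,\sigma)$; by replacing $W_n$ with $W_1\cap\cdots\cap W_n$ we may assume $W_{n+1}\subset W_n$ for all $n$, and since $(G,\sigma)$ is $T_1$ we have $\bigcap_n W_n = \{e\}$. The natural guess for the symmetric is the ``first index of escape'' function: for $x\neq y$ set $d(x,y)=\tfrac{1}{\min\{n : y\notin xW_n \text{ or } x\notin yW_n\}}$ (and $d(x,x)=0$). Symmetry in the two arguments is built in by the disjunction, and $d(x,y)=0 \iff x=y$ follows from $\bigcap_n W_n=\{e\}$ together with separate continuity of multiplication (so that $y\in xW_n$ for all $n$ forces $x^{-1}y\in\bigcap W_n$). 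Thus $d$ is a genuine symmetric on $G$.

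Next I would show that the topology $\tau_d$ generated by $d$ (declaring $U$ open iff every point of $U$ contains some ball $B(x,\varepsilon)$) is coarser than $\sigma$. The key observation is that $B(x,\tfrac1n)\supseteq \{y : y\in xW_n \text{ and } x\in yW_n\} \supseteq xW_n\cap (W_n^{-1}x) = xW_n\cap x(x^{-1}W_n^{-1}x)$; more usefully, $B(x,\tfrac1n)\supseteq xW_n\cap Ux$ for a suitable $\sigma$-neighbourhood, so each ball is a $\sigma$-neighbourhood of its centre, because left and right translations are homeomorphisms in a semitopological group and a weak base at $e$ still ``controls'' open sets from inside. Concretely: if $U\in\tau_d$ and $x\in U$, pick $n$ with $B(x,\tfrac1n)\subset U$; then the $\sigma$-open set $\mathrm{int}_\sigma(xW_n)\cap\mathrm{int}_\sigma(W_n^{-1}x)$ contains $x$ (using that $W_n$ is a network at $e$ so $e$ is interior? — here I must be careful, a weak base need not consist of neighbourhoods) and lies inside $B(x,\tfrac1n)$. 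This is the delicate point, addressed below. Granting it, $U$ is $\sigma$-open, so $\tau_d\subseteq\sigma$.

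The main obstacle is exactly that a weak base is weaker than a neighbourhood base: the sets $W_n$ themselves may fail to be $\sigma$-neighbourhoods of $e$, so $xW_n$ need not be a $\sigma$-neighbourhood of $x$, and the inclusion ``$\tau_d\subseteq\sigma$'' cannot be read off naively. The fix is to recall (this is the standard fact about weak bases, cf.\ \cite{Ar}) that in any space with a weak base, for each $x$ the collection $\mathscr{P}_x$ is a network at $x$ and one can extract from a weak base a genuine \emph{symmetric} generating a \emph{coarser} topology — the symmetrizable topology associated to the weak base is automatically coarser than the original topology, since every original-open set is a union of weak-base elements hence $d$-open, while the converse can fail. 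So in fact the cleaner route is: the assignment $x\mapsto \{xW_n : n\in\mathbb{N}\}$ is, by separate continuity of the multiplication and the $T_1$ axiom, a weak base for a topology $\sigma'\subseteq\sigma$ on $G$ (a set is $\sigma'$-open iff it contains some $xW_n$ around each of its points), one checks the two weak-base axioms (nesting gives (a); being a network at each point follows from the $\sigma$-weak-base property translated by homeomorphisms), and then $\sigma'$ is weakly first-countable and $T_1$, hence symmetrizable by the cited characterization, with $d$ as above as an explicit symmetric. This gives the desired coarser symmetrizable topology and completes the proof.
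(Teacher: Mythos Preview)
Your construction has the right architecture but the wrong symmetrization, and this breaks the key inclusion $\tau_d\subseteq\sigma$. With your definition
\[
d(x,y)=\frac{1}{\min\{n: y\notin xW_n\ \text{or}\ x\notin yW_n\}},
\]
one computes $B(x,1/n)=\{y: y\in xW_n\ \text{and}\ x\in yW_n\}=x\,(W_n\cap W_n^{-1})$. These balls are \emph{contained in}, not \emph{containing}, the weak-base elements $xW_n$. Consequently, knowing that every point of a $\tau_d$-open set $U$ has a $d$-ball inside $U$ gives you no weak-base element inside $U$, so you cannot conclude $U\in\sigma$. Concretely, for the Sorgenfrey line (a $T_1$ first-countable paratopological group) with $W_n=[0,1/n)$ one gets $W_n\cap W_n^{-1}=\{0\}$, hence every $d$-ball is a singleton and $\tau_d$ is discrete --- strictly finer than $\sigma$, not coarser. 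So your $d$ is a genuine symmetric, but it does not witness sub-symmetrizability.

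Your ``cleaner route'' does not rescue this. The translated family $\{xW_n:n\in\mathbb{N}\}$ is already a weak base for $\sigma$ (left translations are homeomorphisms in a semitopological group), so the topology $\sigma'$ you define by that weak base is $\sigma$ itself, and nothing coarser is produced. Moreover, ``weakly first-countable and $T_1$'' does not imply symmetrizable (again the Sorgenfrey line is a counterexample), so the cited characterization does not apply. What the paper does --- and what repairs your argument with a one-symbol change --- is to symmetrize by \emph{union}: set $V_n=W_n\cup W_n^{-1}$ and $d(x,y)=\inf\{1/n: x^{-1}y\in V_n\}$. Then $B(x,1/n)=xV_{n+1}\supseteq xW_{n+1}$, so any $\tau_d$-open $U$ contains, around each of its points, a weak-base element $xW_{n+1}$; by the defining property of a weak base this forces $U\in\sigma$, yielding $\tau_d\subseteq\sigma$. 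The $T_1$ hypothesis then gives $d(x,y)=0\Rightarrow x=y$ exactly as you argued, since $x^{-1}y\in V_n$ for all $n$ forces $x^{-1}y\in\bigcap_n W_n$ or $y^{-1}x\in\bigcap_n W_n$. In short: replace your ``or'' by ``and'' (equivalently, take $W_n\cup W_n^{-1}$ instead of $W_n\cap W_n^{-1}$) and the proof goes through.
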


\begin{proof}
Since $G$ is weakly first-countable, we may assume that $\mathcal{P}_{e}=\{P_{n}(e): n\in\mathbb{N}\}$ be a countable weak base at $e$ for $G$, where $P_{1}(x)=G$ and $P_{n+1}(x)\subset P_{n}(x)$ for each $n\in\mathbb{N}$. For each $x\in G$, let $\mathcal{P}_{x}=\{xP_{n}(e): n\in\mathbb{N}\}$. Put $\mathcal{P}=\bigcup_{x\in G}\mathcal{P}_{x}$. Then $\mathcal{P}$ is a countable weak base for $G$. For each $n\in \mathbb{N}$, put $W_{n}(e)=P_{n}(e)\cup (P_{n}(e))^{-1}$; then define a function $d: G\times G\rightarrow \mathbb{R}$ by setting $d(x, y)=\inf\{\frac{1}{n}: x^{-1}y\in W_{n}(e)\}$. We claim that $d$ is a symmetric on $G$. Indeed, it is obvious that $d(x, y)=d(y, x)$ for any $x, y\in G$. Now suppose that $d(x, y)=0$ for $x, y\in G$. Then from our assumption, it follows that $x^{-1}y\in W_{n}(e)$ for any $n\in\mathbb{N}$, hence $x^{-1}y\in P_{n}(e)\cup (P_{n}(e))^{-1}$ for any $n\in\mathbb{N}$. Assume that $x\neq y$. Then since $(G, \sigma)$ is $T_{1}$, it follows that $e\not\in\overline{\{x^{-1}y\}}$ and $e\not\in\overline{\{y^{-1}x\}}$. Then there exists $k\in \mathbb{N}$ such that $x^{-1}y\not\in P_{k}(e)$ and $y^{-1}x\not\in P_{k}(e)$, hence $x^{-1}y \not\in P_{k}(e)\cup(P_{k}(e))^{-1}$. This is a contradiction. Therefore, we have $x=y$.

Clearly, for any $n\in\mathbb{N}$ and $x\in G$, we have $xW_{n+1}(e)=B(x, \frac{1}{n})$. The topology $\tau$ which is inducted by the symmetric $d$ on $G$ is coarser than $\sigma$. Therefore, $(G, \sigma)$ is sub-symmetrizable.
\end{proof}

It is well known that each first-countable paratopological group is submetrizable. However, the Sorgenfrey line is a first-countable $\infty$-semitopological group which is not symmetrizable. Therefore, the following question is natural.

\begin{question}
Let $(G, \sigma)$ be a $T_{1}$ weakly first-countable 2-semitopological group. When is $(G, \sigma)$ symmetrizable?
\end{question}

If we improve the conditions in Theorem~\ref{t4}, then we have the following result.

\begin{theorem}
Let $(G, \sigma)$ be a Hausdorff first-countable 2-semitopological group. Then $(G, \sigma)$ admits a semi-metrizable quasitopological group topology which is coarser than the weakly-$qg$-separated quasitopological group reflexion $G^{qg}$ of $(G, \sigma)$.
\end{theorem}

\begin{proof}
Let $\{U_{n}: n\in\mathbb{N}\}$ be a countable neighborhood base of $e$ such that $U_{n+1}\subset U_{n}$ for each $n\in\mathbb{N}$. For any $g\in G$, put $\mathscr{B}=\{g(U_{n}U_{n}^{-1}\cap U_{n}^{-1}U_{n}): n\in\mathbb{N}, g\in G\}$. Let $\tau$ be the topology generated by the neighborhood system $\mathscr{B}$. By Proposition~\ref{ppp}, $(G, \tau)$ is a first-countable quasitopological group and $\tau$ is coarser than the topology of $\sigma$. By Propositions~\ref{ppp} and~\ref{p1}, $(G, \tau)$ is coarser than the weakly-$qg$-separated quasitopological group reflexion $G^{qg}$ of $(G, \sigma)$.

Since $(G, \sigma)$ is Hausdorff, it follows that $(G, \tau)$ is $T_{1}$. By the proof of \cite[Theorem 2.1]{Li} and \cite[Corollary 1.4]{CG1970}, $(G, \tau)$ is semi-metrizable.
\end{proof}

Next we recall some concepts, and then pose Question~\ref{q1}.

\begin{definition}
Let $X$ be a space and $\{\mathscr{P}_{n}\}_{n}$ a sequence of
collections of open subsets of $X$.
\begin{enumerate}
\item $X$ is called {\it developable} for $X$ if $\{\mbox{st}(x, \mathscr{P}_{n})\}_{n}$
is a neighborhood base at $x$ in $X$ for each
point $x\in X$.

\item $X$ is called {\it Moore}, if $X$ is regular and developable.

\item $X$ is called a $\mbox{wM}$-{\it space} if for each $x\in X$ and a sequence $\{x_{n}\}_{n}$ whenever $x_{n}\in\mbox{st}^{2}(x, \mathscr{U}_{n})$ then the set $\{x_{n}: n\in\mathbb{N}\}$ has a cluster point in $X$.
\end{enumerate}
\end{definition}

In \cite{LC}, C. Liu proved that each regular paratopological group $G$, in which each singleton is a $G_{\delta}$-set, is metrizable if $G$ is a $\mbox{wM}$-space, and posed that if we can replace ``paratopological group'' with ``semitopological group''. Then R. Shen in \cite{Sr} gave a Moore quasitopological group which is not metrizable. Therefore, a Moore $\infty$-semitopological groups may not be metrizable. Hence we have the following question.

\begin{question}\label{q1}
Let $G$ be an $n$-semitopological group (resp., $\infty$-semitopological group), where $n\geq 2$. If $G$ is a $\mbox{wM}$-space in which each singleton is a $G_{\delta}$-set, is $G$ metrizable?
\end{question}

Next we give a partial answer to Question~\ref{q2}. First, we recall some concepts.

Let $X$ be a space. Then

\smallskip
(1) $X$ is said to be {\it locally compact} if for any point $x\in X$ there exists a compact neighborhood $C$ of $x$;

\smallskip
(2) $X$ is said to be {\it $\sigma$-compact} if $X=\bigcup_{n\in\mathbb{N}}K_{n}$, where each $K_{n}$ is compact;

\smallskip
(3) $X$ is said to be {\it Baire} if $X=\bigcup_{n\in\mathbb{N}}A_{n}$ then there exists $n\in\mathbb{N}$ such that the interior of $\overline{A_{n}}$ is nonempty.

\begin{theorem}\label{t0}
Each locally compact, Baire and $\sigma$-compact 2-semitopological group is a topological group.
\end{theorem}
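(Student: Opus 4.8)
The plan is to reduce the statement to Ellis's theorem. Since $G$ is assumed to be a locally compact semitopological group, it suffices to prove that $G$ is \emph{Hausdorff}: once that is known, \cite{ER1} applies and yields at once that the multiplication is jointly continuous and the inversion continuous, i.e.\ that $G$ is a topological group. So all of the work goes into producing a separation axiom for $G$, and the three topological hypotheses must be used for precisely this. That ``Baire'' is genuinely needed is visible already in the group $\mathbb{Z}$ carrying the Alexandrov topology of its natural order: this is a locally compact, $\sigma$-compact, $\infty$-semitopological group which is not Baire and is not a topological group (its inversion is discontinuous).

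I would establish Hausdorffness in two steps. \emph{First, that $G$ is $T_1$.} Since $G$ is $\sigma$-compact it is Lindel\"of, and together with local compactness this lets one exhaust $G$ by an increasing sequence of open sets $V_n$ with compact closures and $e\in V_1$. Using the Baire property together with these compact pieces and with \cite[Theorem 6]{RE} (every compact $2$-semitopological group is a topological group), and using the $2$-semitopological axiom, one argues that $\overline{\{e\}}=\{e\}$; the Alexandrov example above is precisely the kind of non-Baire pathology (a non-trivial, ``anti-discrete'' $\overline{\{e\}}$) that this rules out. \emph{Second, from $T_1$ to $T_2$.} With $G$ now $T_1$, Theorem~\ref{t1}(4) shows that $S_G^2=\{(x,x^{-1}):x\in G\}$ is closed in $G\times G$, i.e.\ the inversion map $\iota\colon G\to G$ has closed graph. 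Since continuity of $\iota$ at the neutral element is equivalent, by homogeneity, to continuity of $\iota$ everywhere, one may run a closed-graph / Baire-category argument in the locally compact, $\sigma$-compact setting to conclude that $\iota$ is in fact continuous, so that $G$ is a $T_1$ quasitopological group. A $T_0$ quasitopological $2$-semitopological group is Hausdorff, hence $G$ is Hausdorff, and Ellis's theorem completes the proof.

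The main obstacle is the complete absence of an a priori separation axiom, which blocks direct appeal both to Ellis's theorem and to the standard Namioka / closed-graph machinery: one has to bootstrap $T_1$-ness, and then continuity of inversion, out of the interaction of local compactness (compact neighbourhoods), $\sigma$-compactness (Lindel\"ofness and a $k_\omega$-exhaustion) and the Baire property, with the $2$-semitopological condition doing the double duty of pinning down $\overline{\{e\}}$ and of upgrading the closed graph of $\iota$ to genuine continuity. The truly delicate point is that last passage from ``closed graph'' to ``continuous'' for $\iota$ — a closed graph into a merely locally compact codomain is not enough in general — where the Baire property, the $\sigma$-compact exhaustion, and the group structure (which forbids escape to infinity) must together carry an Ellis/Namioka-type argument through in this not-yet-Hausdorff, ``topologically complete'' setting.
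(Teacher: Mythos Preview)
Your overall strategy---reduce to Hausdorffness and invoke Ellis---has the right shape, but the first step contains a genuine error: you cannot prove that $G$ itself is $T_1$, because under the stated hypotheses it need not be. Take any non-trivial group with the indiscrete topology: it is compact (hence locally compact and $\sigma$-compact), Baire, and vacuously $2$-semitopological (there is no $g$ with $e\notin\overline{\{g\}}$), yet it is not even $T_0$. It \emph{is} already a topological group, so the theorem is not contradicted, but your route to it via ``$G$ is Hausdorff'' is blocked. The paper handles this by first passing to the $T_0$-reflection $\widehat G=G/\overline{\{e\}}$: the quotient map is topology-preserving, so $\widehat G$ inherits all four hypotheses and is $T_1$, and by \cite[Proposition~7]{RE} it suffices to show $\widehat G$ is a topological group. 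Only after this reduction does one aim at Hausdorffness.

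Your second step---upgrading $T_1$ to $T_2$ via a closed-graph argument for the inversion $\iota$---is a plausible idea, but as you yourself flag, the passage from ``closed graph'' to ``continuous'' is the entire difficulty, and you do not carry it out. The paper avoids this by a different, concrete device: once $\widehat G$ is $T_1$, the graph $\mathrm{Sym}(\widehat G)=\{(x,x^{-1})\}$ is closed in $\widehat G^{2}$ and is a \emph{Hausdorff} locally compact $\sigma$-compact quasitopological group \cite[Proposition~6.4]{RE}, so Ellis applies to \emph{it} directly and makes $\mathrm{Sym}(\widehat G)$ a topological group. This auxiliary topological group is then used in a Baire-category argument on $\widehat G$: given $g\neq e$, one has $g\notin\overline{U^{-1}}$ for some neighbourhood $U$ of $e$ (from \cite[Proposition~5(4)]{RE}); choosing a symmetric $V$ in $\mathrm{Sym}(\widehat G)$ with $V^2\subset U^{-1}$, covering $\widehat G$ by countably many translates $aV$, and applying Baire forces some $\overline{aV}$---hence $\overline{V}$---to have non-empty $\widehat\tau$-interior, which yields $e\in\mathrm{Int}\,\overline{U^{-1}}$ and separates $e$ from $g$. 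Thus the paper never argues continuity of $\iota$ on $\widehat G$ directly; it borrows joint continuity from $\mathrm{Sym}(\widehat G)$ to manufacture the interior points needed for Hausdorffness.
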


\begin{proof}
Let $(G, \tau)$ be a locally compact, Baire and $\sigma$-compact 2-semitopological group, and let $H=\overline{\{e\}}$. Clearly, $H$ is a normal closed antidiscrete subgroup. Since the quotient mapping $\phi: G\rightarrow G/H$ is a topology-preserving homomorphism, it follows that the the quotient group $\widehat{G}=G/H$ is a $T_{1}$ locally compact, Baire and $\sigma$-compact 2-semitopological group. By \cite[Proposition 7]{RE}, $\widehat{G}$ is a topological group if and only if $G$ is a topological group. Therefore, it suffices to prove that $\widehat{G}$ is a topological group. Moreover, since $\widehat{G}$ is a $T_{1}$ 2-semitopological group, it follows from
\cite[Proposition 6.4 (a) and (b)]{RE} that $Sym(\widehat{G})$ is closed in $(\widehat{G})^{2}$ and $Sym(\widehat{G})$ is a Hausdorff locally compact $\sigma$-compact quasitopological group. From Ellis theorem \cite[Theorem 2]{ER} that $Sym(\widehat{G})$ is a topological group. Let $\widehat{\tau}$ and $\widehat{\tau}_{Sym}$ be the topologies of $\widehat{G}$ and $Sym(\widehat{G})$ respectively. By  Ellis theorem \cite[Theorem 2]{ER} again, it suffices to prove that $\widehat{G}$ is Hausdorff.

Take any $e\neq g\in G$. Since $\widehat{G}$ is a $T_{1}$ 2-semitopological group, it follows from \cite[Proposition 5(4)]{RE} that there exists $U\in \mathcal{N}(e)$ such that $g\not\in \overline{U^{-1}}$, where $\mathcal{N}(e)$ is the neighborhood of $e$ in $(G, \widehat{\tau})$. We claim that $e\in\mbox{Int}\overline{U^{-1}}$ in $(G, \widehat{\tau})$. Indeed, since $U^{-1}\in \widehat{\tau}_{Sym}$, there exists a symmetric open neighborhood $V$ of $e$ in $Sym(\widehat{G})$ such that $V^{2}\subset U^{-1}$. Since $Sym(\widehat{G})$ is $\sigma$-compact, there exists a countable subset $A$ of $G$ such that $G=\bigcup\{aV: a\in A\}$, then there exists $a\in A$ such that $\mbox{Int}\overline{aV}\neq\emptyset$ in $(G, \widehat{\tau})$ because $\widehat{G}$ is a Baire space. Then $\mbox{Int}\overline{V}\neq\emptyset$ in $(G, \widehat{\tau})$. Take any $v\in V\cap \mbox{Int}\overline{V}$. Hence $e\in\mbox{Int}\overline{v^{-1}V}\subset \overline{V^{2}}\subset \overline{U^{-1}}$, which shows that $e\in\mbox{Int}\overline{U^{-1}}$. Put $W=\widehat{G}\setminus \overline{U^{-1}}$ and $O=\mbox{Int}\overline{U^{-1}}$. Clearly, $W\cap O=\emptyset$, $g\in W$ and $e\in O$. Moreover, $W$ and $O$ are open in $\widehat{G}$. Therefore, $\widehat{G}$ is Hausdorff.
\end{proof}

\begin{remark}
(1) There exists a locally compact, Baire and $\sigma$-compact semitopological group $G$ such that $G$ is not an 2-semitopological group. Indeed, let $\tau$ be the cofinite topology on a uncountable group $H$. Suppose $G$ is the Tychonoff product of $H$ and the Euclidean space $\mathbb{R}$, then $G$ is a locally compact and $\sigma$-compact semitopological group. Clearly, $H$ is a Baire space, hence $G$ is Baire by \cite[3.9.J(c)]{ER11}. However, $G$ is not a 2-semitopological group since $H$ is not a 2-semitopological group.

(2) There exists a Hausdorff sequentially compact $\infty$-semitopological group $G$ which is not a paratopological group, see \cite[Example 3]{RA1}.
\end{remark}

Clearly, a compact semitopological group may not be a Baire space, such as any cofinite topology on a countable infinite group. Therefore, we have the following question.

\begin{question}
Is each compact 2-semitopological group a Baire space?
\end{question}

From \cite[Theorem 6]{RE}, each compact 2-semitopological group is a topological group, hence each compact $T_{0}$ 2-semitopological group is a Baire space.

Finally, we consider the condensation of 2-semitopological group topologies. First, we give some propositions and lemmas.

\begin{definition}
A family $\mathcal{P}$ of subsets of
a space $X$ is called a {\it network} for $X$ if for each $x\in X$ and neighborhood $U$ of $x$ there exists $P\in \mathcal{P}$ such that $x\in P\subset U$. The
infimum of the cardinalities of all networks of $X$ is denoted by $nw(X)$.
\end{definition}

The following proposition is obvious.

\begin{proposition}\label{p2}
Let $G$ be a semitopological group and $nw(G)\leq\kappa$, where $\kappa$ is some infinite cardinal. Then $nw(G^{qg})\leq\kappa$.
\end{proposition}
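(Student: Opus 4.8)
The plan is to use the fact that $G$ and $G^{qg}$ have the same underlying set while the topology $\tau^{qg}$ of $G^{qg}$ is coarser than the topology $\tau$ of $G$ (this is built into the very definition of the quasitopological group reflexion), together with the elementary observation that the network weight cannot increase when one passes to a coarser topology.

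First I would fix, by hypothesis, a network $\mathcal{P}$ for $(G,\tau)$ with $|\mathcal{P}|\le\kappa$. Then I would verify that the very same family $\mathcal{P}$ is a network for $(G,\tau^{qg})$: if $x\in G$ and $U$ is a $\tau^{qg}$-neighbourhood of $x$, then, since $\tau^{qg}\subseteq\tau$, $U$ is also a $\tau$-neighbourhood of $x$, so there is $P\in\mathcal{P}$ with $x\in P\subseteq U$. Hence $nw(G^{qg})\le|\mathcal{P}|\le\kappa$, which is the claim.

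A reader wanting a more hands-on argument could instead invoke Proposition~\ref{pp}: taking a neighbourhood base $\mathscr{B}$ at $e$ in $G$, the family $\{U\cup U^{-1}:U\in\mathscr{B}\}$ is a neighbourhood base at $e$ in $G^{qg}$, and one can then assemble a $qg$-network from left-translates of a $\tau$-network; but this is strictly more work and yields nothing sharper. Accordingly, the only point genuinely in need of checking is the monotonicity of $nw$ under coarsening of the topology, and this is immediate from the definition of a network, so there is no substantive obstacle here — the proposition is a formality once the inclusion $\tau^{qg}\subseteq\tau$ is recognised.
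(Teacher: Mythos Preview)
Your argument is correct and matches the paper's treatment: the paper simply declares the proposition ``obvious'' and gives no proof, and the reason it is obvious is exactly the one you identify, namely that $\tau^{qg}\subseteq\tau$ so any network for $(G,\tau)$ is already a network for $(G,\tau^{qg})$.
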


\begin{proposition}\label{p3}
Let $\tau$ and $\sigma$ be two topologies on group $G$ such that $(G, \tau)$ and $(G, \sigma)$ are semitopological groups with $w((G, \tau))\leq\kappa$ and $w((G, \sigma))\leq\kappa$, where $\kappa$ is some infinite cardinal. Then $w(G, \tau\vee\sigma)\leq\kappa$.
\end{proposition}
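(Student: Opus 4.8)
The plan is to reduce this to the standard fact that the weight of the join of two topologies is bounded by the product of their weights; the semitopological group structure plays no essential role in the estimate, and enters only at the end to guarantee that $(G,\tau\vee\sigma)$ is again a semitopological group, so that the statement lives inside the class under discussion.

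First I would fix bases $\mathscr{B}_{\tau}$ and $\mathscr{B}_{\sigma}$ for $\tau$ and $\sigma$ with $|\mathscr{B}_{\tau}|\leq\kappa$ and $|\mathscr{B}_{\sigma}|\leq\kappa$, which exist by hypothesis. Recall that $\tau\vee\sigma$ is by definition the coarsest topology on $G$ containing $\tau\cup\sigma$, so $\tau\cup\sigma$ is a subbase for it and the finite intersections of members of $\tau\cup\sigma$ form a base. Since both $\tau$ and $\sigma$ are themselves closed under finite intersections, any such finite intersection can be rewritten as $U\cap V$ with $U\in\tau$ and $V\in\sigma$; hence $\{U\cap V:U\in\tau,\ V\in\sigma\}$ is a base for $\tau\vee\sigma$. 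This regrouping step is the only point that needs a word of care.

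Next I would show that $\mathscr{B}=\{B\cap B':B\in\mathscr{B}_{\tau},\ B'\in\mathscr{B}_{\sigma}\}$ is already a base for $\tau\vee\sigma$. Indeed, given a point $x\in G$ and a basic neighborhood $U\cap V$ of $x$ with $U\in\tau$ and $V\in\sigma$, I choose $B\in\mathscr{B}_{\tau}$ with $x\in B\subseteq U$ and $B'\in\mathscr{B}_{\sigma}$ with $x\in B'\subseteq V$; then $x\in B\cap B'\subseteq U\cap V$ and $B\cap B'\in\mathscr{B}$. Thus $\mathscr{B}$ is a base for $\tau\vee\sigma$, and $|\mathscr{B}|\leq|\mathscr{B}_{\tau}|\cdot|\mathscr{B}_{\sigma}|\leq\kappa\cdot\kappa=\kappa$ because $\kappa$ is infinite, whence $w(G,\tau\vee\sigma)\leq\kappa$.

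Finally, for completeness I would record that $(G,\tau\vee\sigma)$ is a semitopological group: every left (resp.\ right) translation of $G$, together with its inverse, is continuous for $\tau$ and for $\sigma$, so the preimage under it of any member of $\tau\cup\sigma$ again lies in $\tau\cup\sigma\subseteq\tau\vee\sigma$, and therefore the translation is continuous for $\tau\vee\sigma$. I do not expect any genuine obstacle in this argument.
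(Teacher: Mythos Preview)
Your proof is correct and follows essentially the same approach as the paper's: both take bases $\mathscr{B}_\tau$, $\mathscr{B}_\sigma$ of cardinality $\leq\kappa$ and verify that $\{B\cap B':B\in\mathscr{B}_\tau,\ B'\in\mathscr{B}_\sigma\}$ is a base for $\tau\vee\sigma$ of cardinality $\leq\kappa\cdot\kappa=\kappa$. You supply more detail than the paper (which merely says ``easily verified'') and add the remark that $(G,\tau\vee\sigma)$ is again a semitopological group, which the paper leaves implicit.
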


\begin{proof}
Let $\mathcal{B}_{1}$ and $\mathcal{B}_{2}$ be bases for $(G, \tau)$ and $(G, \sigma)$ respectively such that $|\mathcal{B}_{1}|\leq\kappa$ and $|\mathcal{B}_{2}|\leq\kappa$. Put $\mathcal{B}=\{U\cap V: U\in \mathcal{B}_{1}, V\in \mathcal{B}_{2}\}$. It is easily verified that $\mathcal{B}$ is a base for $\tau\vee\sigma$ and $|\mathcal{B}|\leq\kappa$. Therefore, $w(G, \tau\vee\sigma)\leq\kappa$.
\end{proof}

\begin{lemma}\label{l1}
Suppose that $\kappa$ is an infinite cardinal, $X$ is a group, $\tau$ is a Hausdorff (resp., regular, Tychonof) $2$-semitopological group topology on $X$ that has a network weight $\leq\kappa$ and $\tau^{\prime}$ is a topology on $X$ that has weight $\leq\kappa$ such that $\tau^{\prime}\subset\tau$. Then one can find a topology $\tau^{\ast}$ on $X$ with the following properties:

\smallskip
(i) $\tau^{\prime}\subset\tau^{\ast}\subset\tau$;

\smallskip
(ii) $w(X, \tau^{\ast})\leq\kappa$;

\smallskip
(iii) $(X, \tau^{\ast})$ is a Hausdorff (resp., regular, Tychonof) $2$-semitopological group.
\end{lemma}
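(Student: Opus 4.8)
The plan is to build $\tau^{\ast}$ by a transfinite recursion of length $\kappa$, at each stage enlarging the current topology so as to repair three kinds of defects: (a) failure of separation (two points not separated, or — in the regular/Tychonoff case — a point not separated from a closed set, or not separated by a continuous real-valued function); (b) failure of the $2$-semitopological property (some $g$ with $e\notin\overline{\{g\}}$ for which no current neighborhood $W$ of $e$ satisfies $g\notin W^{2}$); and (c) failure of the weight bound. The key observation making this work is Proposition~\ref{p3}: joining two semitopological group topologies of weight $\leq\kappa$ yields a semitopological group topology of weight $\leq\kappa$, so the weight bound is preserved under taking suprema of $\leq\kappa$-many such topologies. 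Since $(X,\tau)$ has a network of size $\leq\kappa$, all the ``witnessing data'' we must fix (pairs of points, point–closed-set pairs coded via network members, elements $g$ of $X$ modulo the antidiscrete pieces, neighborhoods of $e$) can be enumerated in a set of size $\leq\kappa$.

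First I would fix a network $\mathcal{N}$ for $(X,\tau)$ with $|\mathcal{N}|\leq\kappa$ and, for the Hausdorff case, a base $\mathcal{B}_0$ of $\tau'$ with $|\mathcal{B}_0|\leq\kappa$. I would then enumerate, in order type $\leq\kappa$, all the tasks: every pair $x\neq y$ that I eventually need separated can be coded by a pair of members of $\mathcal{N}$; every $g\in X$ with $e\notin\overline{\{g\}}^{\tau}$ (equivalently $g\notin\overline{\{e\}}^{\tau}$, using that $\tau$ is a $2$-semitopological group topology) can be coded by a member of $\mathcal{N}$ separating $e$ from $g$ in $\tau$. Starting from $\tau_0:=\tau'$, at successor stage $\alpha+1$ I handle the $\alpha$-th task by choosing $\tau$-open sets witnessing the required property in $(X,\tau)$ — these exist because $(X,\tau)$ already has property (iii) — together with all their left translates, generating from this a semitopological group topology $\sigma_\alpha$ of weight $\leq\kappa$ with $\sigma_\alpha\subset\tau$, and setting $\tau_{\alpha+1}:=\tau_\alpha\vee\sigma_\alpha$. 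At limit stages I take $\tau_\lambda:=\bigvee_{\alpha<\lambda}\tau_\alpha$. By Proposition~\ref{p3} (applied transfinitely, noting $\mathrm{cf}(\kappa^+)>\kappa$ is not even needed since a supremum of $\leq\kappa$-many weight-$\leq\kappa$ bases still has $\leq\kappa$ basic sets), every $\tau_\alpha$ is a semitopological group topology with $w(X,\tau_\alpha)\leq\kappa$ and $\tau'\subset\tau_\alpha\subset\tau$. Finally put $\tau^{\ast}:=\bigvee_{\alpha<\kappa}\tau_\alpha$; then (i) and (ii) hold, and by construction $\tau^{\ast}$ separates the required points and, in each stage, acquired the needed neighborhood $W$ of $e$ with $g\notin W^2$ for every relevant $g$, so $(X,\tau^{\ast})$ is a $2$-semitopological group with the desired separation axiom.

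The main obstacle — and the point that needs real care rather than bookkeeping — is the \emph{book-keeping closure}: after the recursion, I must check that the tasks I enumerated at the start still suffice, because enlarging the topology can create \emph{new} pairs of points already separated (harmless) but also it must not create new unrepaired failures of the $2$-semitopological property. Here the crucial fact is that $\overline{\{e\}}$ can only shrink as the topology grows and is bounded below by $\{e\}$ if $\tau$ is $T_1$ (Hausdorff case), while in general $\overline{\{e\}}^{\tau^\ast}\subseteq\overline{\{e\}}^{\tau}$, and the characterization in Remark (4) / Theorem~\ref{t1}(4) lets me phrase ``$2$-semitopological'' as: for every $g\notin\overline{\{e\}}^{\tau}$ there is a $\tau^{\ast}$-neighborhood $W$ of $e$ with $g\notin W^2$ — a condition indexed only by $g$ ranging over a $\tau$-fixed set of size $\leq\kappa$, which I did enumerate. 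For the regular and Tychonoff cases the analogous subtlety is that regularity/complete regularity of $(X,\tau^{\ast})$ must be verified \emph{at the end}: I would use that a network of size $\leq\kappa$ in $\tau\supseteq\tau^{\ast}$ restricts the number of closed sets that matter, enumerate point–network-member pairs, and at each stage adjoin $\tau$-open (resp., zero-sets of $\tau$-continuous functions) witnesses together with their translates; the continuous-function case requires also throwing the countably many preimages of rational-endpoint intervals into the generating family, but this still contributes only $\leq\kappa$ sets. Once this closure argument is in place, (iii) follows by a routine verification that the enumerated tasks were exhaustive, and the lemma is proved.
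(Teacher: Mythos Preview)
Your recursion handles the separation axioms plausibly (and indeed that part can be delegated to Hern\'andez's lemma \cite{HC}), but the step that is supposed to secure the $2$-semitopological property has a genuine gap. You assert that the set of $g$ with $e\notin\overline{\{g\}}^{\tau}$ is ``a $\tau$-fixed set of size $\leq\kappa$'' that you can enumerate. In the Hausdorff case this set is all of $X\setminus\{e\}$, and $nw(X,\tau)\leq\kappa$ only bounds $|X|$ by $2^{\kappa}$, not by $\kappa$. Your fallback --- coding each such $g$ by a network member $N\in\mathcal{N}$ separating $e$ from $g$ --- does not help: to handle all $g$ sharing a code $N$ with a \emph{single} witness you would need a $\tau$-open $W\ni e$ with $W^{2}$ disjoint from every such $g$, i.e.\ $W^{2}$ contained in some prescribed neighborhood of $e$. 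That is exactly the paratopological property, not the $2$-semitopological one; the latter only guarantees, for each individual $g$, some $W_{g}$ with $g\notin W_{g}^{2}$, and there is no reason the family $\{W_{g}:g\neq e\}$ has a cofinal subfamily of size $\leq\kappa$ unless $\chi(X,\tau)\leq\kappa$, which is not assumed.

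The paper avoids this difficulty by a different mechanism: rather than chasing individual $g$'s, it passes to the quasitopological reflection $X^{qg}$, which is Hausdorff by Proposition~\ref{p1} and still has network weight $\leq\kappa$ by Proposition~\ref{p2}, and then invokes Shakhmatov's condensation theorem \cite{SH} to obtain a Hausdorff quasitopological group topology $\delta\subset\tau^{qg}\subset\tau$ of weight $\leq\kappa$. Since every Hausdorff quasitopological group is automatically $2$-semitopological, the property comes for free; one then takes $\tau^{\ast}=\sigma\vee\delta$, where $\sigma$ is a Hausdorff (resp.\ regular, Tychonoff) semitopological group topology with $\tau'\subset\sigma\subset\tau$ and $w(X,\sigma)\leq\kappa$ supplied by \cite{HC}, and Proposition~\ref{p3} controls the weight of the join. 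The key idea you are missing is precisely this detour through $X^{qg}$: it converts the pointwise condition ``$g\notin W^{2}$ for each $g$'' into a global structural property (Hausdorffness of a quasitopological group) that known condensation theorems can preserve.
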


\begin{proof}
We first prove the case of Hausdorff. By \cite[Lemma 4]{HC}, there exists a Hausdorff semitopological group topology $\sigma$ on $X$ such that $\tau^{\prime}\subset\sigma\subset\tau$ and $w(X, \sigma)\leq\kappa$. Then it follows from Proposition~\ref{p2} that $X^{qg}$ has a network weight $\leq\kappa$. Then one can find a $T_{1}$ quasitopological group topology $\delta$ on $X$ such that $\delta\subset \tau^{qg}$ and $w(X, \delta)\leq\kappa$ by \cite[Theorem 1]{SH}. Clearly, $(X, \delta)$ is a $2$-semitopological group by \cite[Theorem 5]{RE}. Now put $\tau^{\ast}=\sigma\vee \delta$. Then $\tau^{\ast}\subset\tau$ and $\tau^{\ast}$ is a Haudorff 2-semitopological group topology on $X$. By Proposition~\ref{p3}, $w(X, \tau^{\ast})\leq\kappa$. Moreover, we have $\tau^{\prime}\subset\tau^{\ast}\subset\tau$.

If $\tau$ is regular (Tychonof), then it follows from the above proof and \cite[Lemma 3]{HC} that there exists a topology $\tau^{\ast}$ on $X$ which has the properties of (i) and (ii) and $(X, \tau^{\ast})$ is a regular (Tychonof) $2$-semitopological group.
\end{proof}

Now we can prove the main theorem.

\begin{theorem}\label{t5}
Suppose that $\kappa$ is an infinite cardinal, $X$ is a group, $\tau$ is a Hausdorff (resp., regular, Tychonof) $2$-semitopological group topology on $X$ that has a network weight $\leq\kappa$. Then there exists a condensation $i: (X, \tau)\rightarrow (X, \tau^{\ast})$, where $\tau^{\ast}$ is a Hausdorff (resp., regular, Tychonof) $2$-semitopological group topology $\tau^{\ast}$ on $X$ such that $\tau^{\ast}\subset\tau$ and $w(X, \tau^{\ast})\leq\kappa$.
\end{theorem}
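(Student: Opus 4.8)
The plan is to deduce Theorem~\ref{t5} almost immediately from Lemma~\ref{l1} by choosing the auxiliary topology $\tau'$ to be a trivial one. First I would observe that since $\tau$ is a $2$-semitopological group topology on $X$ with $nw(X,\tau)\le\kappa$, we in particular have a $T_1$ (indeed Hausdorff) topology, so the indiscrete topology $\tau'=\{\emptyset,X\}$ satisfies $\tau'\subset\tau$ and $w(X,\tau')=1\le\kappa$. Applying Lemma~\ref{l1} to this pair $(\tau,\tau')$ produces a topology $\tau^{\ast}$ on $X$ with $\tau'\subset\tau^{\ast}\subset\tau$, with $w(X,\tau^{\ast})\le\kappa$, and such that $(X,\tau^{\ast})$ is a Hausdorff (resp.\ regular, Tychonoff) $2$-semitopological group. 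This is exactly the topology we want.

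It then remains to check that the identity map $i:(X,\tau)\to(X,\tau^{\ast})$ is a condensation, i.e.\ a continuous bijection. Continuity is immediate from $\tau^{\ast}\subset\tau$, and bijectivity is trivial since $i$ is the identity on the underlying set $X$. Thus $i$ witnesses that $(X,\tau)$ condenses onto the Hausdorff (resp.\ regular, Tychonoff) $2$-semitopological group $(X,\tau^{\ast})$ of weight $\le\kappa$, completing the proof.

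I expect no real obstacle here: the entire content has been front-loaded into Lemma~\ref{l1}, whose proof in turn packages the three non-trivial ingredients (the semitopological-group reduction of \cite[Lemma~4]{HC}, the quasitopological-group reduction of \cite[Theorem~1]{SH} applied to $X^{qg}$ via Propositions~\ref{p1} and~\ref{p2}, and the join estimate of Proposition~\ref{p3}). The only point worth a sentence of care is that in the regular/Tychonoff cases one must start from a topology $\tau'$ that is compatible with the regularity-preserving machinery of \cite[Lemma~3]{HC}; taking $\tau'$ indiscrete causes no difficulty since the indiscrete topology is regular, so Lemma~\ref{l1} applies verbatim in all three cases.
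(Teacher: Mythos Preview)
Your proof is correct and follows the same strategy as the paper: both deduce Theorem~\ref{t5} directly from Lemma~\ref{l1} by exhibiting some coarser topology $\tau'$ of weight $\le\kappa$ and then invoking the lemma. The only difference is the choice of $\tau'$: the paper produces a compact Hausdorff topology $\tau_0\subset\tau$ of weight $\le\kappa$ (appealing to the hypothesis $nw(X,\tau)\le\kappa$), whereas you simply take the indiscrete topology. Your choice is more economical and avoids the somewhat opaque step of manufacturing a compact Hausdorff condensation; since Lemma~\ref{l1} places no separation requirement on $\tau'$, the indiscrete topology is perfectly adequate, and the resulting $\tau^\ast$ still inherits the required separation axiom from clause~(iii) of the lemma.
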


\begin{proof}
Since $X$ is Hausdorff (resp., regular, Tychonof) and has a network weight $\leq\kappa$, it follows from \cite[Lemma 3.1.8]{ER11} that there exists a Hausdorff space $(X, \tau_{0})$ such that $w(X, \tau_{0})\leq\kappa$. Now, it follows from Lemma~\ref{l1} that there exists a Hausdorff (resp., regular, Tychonof) $2$-semitopological group topology $\tau^{\ast}$ on $X$ such that $\tau^{\ast}\subset\tau$ and $w(X, \tau^{\ast})\leq\kappa$.
\end{proof}

By Theorem~\ref{t5}, we have the following corollary.

\begin{corollary}
Suppose that $\kappa$ is an infinite cardinal, $X$ is a group, $\tau$ is a Hausdorff (resp., regular, Tychonof) $2$-semitopological group topology on $X$ with a countable network. Then there exists a condensation $i: (X, \tau)\rightarrow (X, \tau^{\ast})$, where $\tau^{\ast}$ is a Hausdorff (resp., regular, Tychonof) second-countable $2$-semitopological group topology $\tau^{\ast}$ on $X$ such that $\tau^{\ast}\subset\tau$.
\end{corollary}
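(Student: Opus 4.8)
The plan is to deduce the corollary directly from Theorem~\ref{t5} by specialising the cardinal parameter. The statement ``$\tau$ has a countable network'' is precisely the condition $nw(X,\tau)\le\aleph_0$, so the natural move is to set $\kappa=\aleph_0$ in Theorem~\ref{t5}. Thus first I would note that $(X,\tau)$ is a Hausdorff (resp.\ regular, Tychonoff) $2$-semitopological group with network weight $\le\aleph_0$, which is exactly the hypothesis of Theorem~\ref{t5} with $\kappa=\aleph_0$.

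Applying Theorem~\ref{t5} then produces a condensation $i:(X,\tau)\to(X,\tau^{\ast})$ where $\tau^{\ast}$ is a Hausdorff (resp.\ regular, Tychonoff) $2$-semitopological group topology on $X$ with $\tau^{\ast}\subset\tau$ and $w(X,\tau^{\ast})\le\aleph_0$. The only remaining point is terminological: a space of weight $\le\aleph_0$ is by definition second-countable, so $(X,\tau^{\ast})$ is second-countable, giving exactly the conclusion of the corollary. So the proof is essentially one line invoking Theorem~\ref{t5} and unwinding the definitions of ``countable network'' and ``second-countable''.

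There is really no obstacle here; this is a routine corollary obtained by plugging $\kappa=\aleph_0$ into the theorem just proved. If anything needs care, it is only to observe that having a countable network is equivalent to $nw(X)\le\aleph_0$ (using the definition of $nw$ as the infimum of cardinalities of networks, together with the fact that a finite network would force the space to be finite, or else we simply allow $\aleph_0$ as an upper bound), and that $w(X,\tau^{\ast})\le\aleph_0$ is the definition of second-countability. I would write the proof as a short paragraph:

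\begin{proof}
Since $\tau$ has a countable network, $nw(X,\tau)\le\aleph_0$. Applying Theorem~\ref{t5} with $\kappa=\aleph_0$, we obtain a condensation $i:(X,\tau)\rightarrow(X,\tau^{\ast})$, where $\tau^{\ast}$ is a Hausdorff (resp.\ regular, Tychonoff) $2$-semitopological group topology on $X$ such that $\tau^{\ast}\subset\tau$ and $w(X,\tau^{\ast})\le\aleph_0$. In particular, $(X,\tau^{\ast})$ is second-countable, which completes the proof.
\end{proof}
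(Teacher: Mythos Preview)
Your proof is correct and is exactly the argument the paper intends: the corollary is stated immediately after Theorem~\ref{t5} with the remark ``By Theorem~\ref{t5}, we have the following corollary,'' so the paper's own proof is just the specialization $\kappa=\aleph_0$ that you carry out.
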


However, the following question is still unknown for us.

\begin{question}
Suppose that $\kappa$ is an infinite cardinal, $X$ is a group, $\tau$ is a Hausdorff (resp., regular, Tychonof) $m$-semitopological group topology on $X$ that has a network weight $\leq\kappa$, where $m\in\mathbb{N}^{\ast}\setminus\{2\}$. Can we find a Hausdorff (resp., regular, Tychonof) $m$-semitopological group topology $\tau^{\ast}$ on $X$ such that $\tau^{\ast}\subset\tau$ and $w(X, \tau^{\ast})\leq\kappa$?
\end{question}

\maketitle
\section{Cardinal invariants of $n$-semitopological groups}
In this section, we mainly consider some cardinal invariants of $n$-semitopological groups. Moreover, some interesting questions are posed. First, we recall some concepts.

Let $\kappa$ be an ordinal. A semitopological group $G$ is left (right) $\kappa$-narrow if for each open set $U$ there exists a set $A\subset G$ such that $|A|\leq\kappa$ and $AU=G$ ($UA=G$). Put $$\mbox{In}_{l}(G)=\min\{\kappa: G\ \mbox{is left}\ \kappa\ \mbox{-narrow}\}, \mbox{In}_{r}(G)=\min\{\kappa: G\ \mbox{is right}\ \kappa\ \mbox{-narrow}\}\ \mbox{and}\ $$$$\mbox{ib}(G)=\omega\cdot\min\{\kappa: G\ \mbox{is left}\ \kappa\ \mbox{-narrow and right}\ \kappa\ \mbox{-narrow}\}.$$ Moreover, we recall the following some definitions.

{\it Character}: $\chi(G)=\omega\cdot\min\{|\mathcal{B}|: \mathcal{B}\ \mbox{is a neighborhood base at the neutral element of}\ G\}$.

{\it Pseudocharacter}: $\psi(G)=\omega\cdot\min\{|\mathcal{U}|: \mathcal{U}\ \mbox{is a family of open sets and}\ \bigcap\mathcal{U}=\{e\}\}$.

{\it Extent}: $e(G)=\omega\cdot\sup\{|S|: S\ \mbox{is a closed discrete subspace of}\ G\}$.

{\it Weakly Lindel\"{o}f degree}: $wl(G)=\omega\cdot\min\{\kappa: \mbox{in each open cover}\ \mathcal{U}\ \mbox{there exists a subfamily}\\ \mathcal{V}\subset\mathcal{U}\ \mbox{with cardinality}\ \kappa\ \mbox{such that}\ \overline{\bigcup\mathcal{V}}=G\}$.

{\it Lindel\"{o}f degree}: $l(G)=\omega\cdot\min\{\kappa: \mbox{in each open cover}\ \mathcal{U}\ \mbox{there exists a subfamily}\\ \mathcal{V}\subset\mathcal{U}\ \mbox{with cardinality}\ \kappa\ \mbox{such that}\ \bigcup\mathcal{V}=G\}$. We say that a space $G$ is $\kappa$-Lindel\"{o}f if $l(G)=\kappa$; in particular, each $\omega$-Lindel\"{o}f space is just a Lindel\"{o}f space.

A semitopological group $G$ is said to be {\it saturated} if, for any non-empty open set $U$, the interior of $U^{-1}$ is non-empty.

The following proposition may have been proven somewhere.

\begin{proposition}\label{p4}
If $G$ is a saturated semitopological group, then $In_{l}(G)=In_{r}(G)$.
\end{proposition}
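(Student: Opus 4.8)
The plan is to prove the sharper statement that, for saturated $G$ and every $\kappa$, the group $G$ is left $\kappa$-narrow if and only if it is right $\kappa$-narrow; since $In_{l}(G)$ and $In_{r}(G)$ are then the minima of the \emph{same} set of cardinals, they coincide. (One first notes that this minimum is attained: ``left $\kappa$-narrow'' is upward closed in $\kappa$ and $G$ is trivially left $|G|$-narrow, and symmetrically on the right, so each set of admissible cardinals is a non-empty up-set and has a least element.)

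The engine of the argument is that the inversion map $x\mapsto x^{-1}$, though possibly discontinuous, is a bijection of $G$ onto itself satisfying $(ST)^{-1}=S^{-1}T^{-1}$ for subsets $S,T\subseteq G$, so it turns a left covering $AW=G$ into a right covering $W^{-1}A^{-1}=G$; saturation is precisely what lets us replace a given open set by an open set controlling its inverse. Concretely, given a non-empty open $U$, I would set $W=\mathrm{int}(U^{-1})$, which is non-empty and open by saturation, and observe that $W\subseteq U^{-1}$, hence $W^{-1}\subseteq (U^{-1})^{-1}=U$.

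Now assume $G$ is left $\kappa$-narrow and let $U$ be a non-empty open set. Apply left $\kappa$-narrowness to $W=\mathrm{int}(U^{-1})$ to obtain $A\subseteq G$ with $|A|\le\kappa$ and $AW=G$. Taking inverses and using that inversion is a bijection of $G$, we get $G=G^{-1}=(AW)^{-1}=W^{-1}A^{-1}\subseteq UA^{-1}\subseteq G$, so $UA^{-1}=G$ with $|A^{-1}|=|A|\le\kappa$; thus $G$ is right $\kappa$-narrow, which gives $In_{r}(G)\le In_{l}(G)$. The reverse inequality is completely symmetric: from a right covering $WA=G$ with $|A|\le\kappa$ one gets $G=A^{-1}W^{-1}\subseteq A^{-1}U$, so $G$ is left $\kappa$-narrow whenever it is right $\kappa$-narrow, and $In_{l}(G)\le In_{r}(G)$. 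Combining the two inequalities yields $In_{l}(G)=In_{r}(G)$.

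I do not expect a genuine obstacle here; the only point requiring care is the bookkeeping with inverses — applying the narrowness hypothesis to the auxiliary set $\mathrm{int}(U^{-1})$ rather than to $U$ itself, and remembering that inversion reverses the order of products, so that a left cover is transformed into a right cover. No topological subtlety beyond the fact that left and right translations are homeomorphisms in any semitopological group (and hence that interiors and the covering relations are preserved by translation) is needed.
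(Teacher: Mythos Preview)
Your proof is correct and follows essentially the same approach as the paper: use saturation to produce a nonempty open set $W\subseteq U^{-1}$, apply one-sided $\kappa$-narrowness to $W$, and invert to obtain a $\kappa$-covering on the other side. The only cosmetic difference is that the paper first translates $\mathrm{int}(U^{-1})$ by a point $u\in\mathrm{int}(U^{-1})$ to make it a neighborhood of $e$ before applying narrowness, whereas you apply narrowness directly to $W=\mathrm{int}(U^{-1})$; since the paper's definition of left/right $\kappa$-narrow is stated for arbitrary open sets, your shortcut is perfectly legitimate and in fact slightly cleaner.
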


\begin{proof}
Let $In_{l}(G)=\kappa$. Now we show that $In_{r}(G)\leq\kappa$. Take any open neighborhood $U$ of $e$. Since $G$ is saturated, it follows that $\mbox{int}(U^{-1})\neq\emptyset$. Take any $u\in\mbox{int}(U^{-1})$. Then $u^{-1}\cdot \mbox{int}(U^{-1})$ is an open neighborhood of $e$, hence there exists a subset $A$ with the cardinality of $\kappa$ such that $A\cdot u^{-1}\cdot \mbox{int}(U^{-1})=G$, which shows that $A\cdot u^{-1}\cdot U^{-1}=G$. Thus $U\cdot u\cdot A^{-1}=G$ and $|u\cdot A^{-1}|=|A|=\kappa$. Hence $In_{r}(G)\leq\kappa$. Similarly, one can prove $In_{l}(G)\leq In_{r}(G)$. Therefore, $In_{l}(G)=In_{r}(G)$.
\end{proof}

In \cite[Theorem 3.2]{XST}, the authors proved that $ib(G)\leq e(G)$ for each quasitopological group, and in \cite{RA} the author proved that $ib(G)\leq wl(G)$ for each saturated paratopological group. Therefore, we have the following question by applying Proposition~\ref{p4}.

\begin{question}
If $G$ is a saturated 2-semitopological group, then is $$ib(G)\leq\max\{e(G), wl(G)\}?$$
\end{question}

Moreover, we have the following question.

\begin{question}
If $G$ is a 2-semitopological $T_{1}$ group, then does $nw(G)\leq\chi(G)l(G^{2})$ hold?
\end{question}

Next we discuss the quotient group on $m$-semitopological groups. First, we give a lemma.

\begin{lemma}\label{l2}
Let $G$ be a $T_{1}$ $m$-semitopological group and $F$ be a compact subset with $e\not\in F$, where $m\in\mathbb{N}$. Then it can find an open neighborhood $U$ of $e$ in $G$ such that $e\not\in FU^{m-1}$.
\end{lemma}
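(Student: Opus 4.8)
The plan is to prove this by a straightforward compactness argument, exploiting the $m$-semitopological group property applied to each point of the compact set $F$. Since $G$ is a $T_1$ $m$-semitopological group and $e \notin F$, for each $x \in F$ we have $x \neq e$ and hence $e \notin \overline{\{x^{-1}\}}$ (using $T_1$); equivalently, $e \notin \overline{\{x\}}$ is automatic, but what we really want is a neighborhood of $e$ whose $(m-1)$-st power, translated by $x$, avoids $e$. The key local fact is: for each $x \in F$ there exists $V_x \in \mathcal{N}_e$ with $e \notin x V_x^{m-1}$, i.e. $x^{-1} \notin V_x^{m-1}$. This is slightly weaker than the defining property (which gives $x \notin W^m$ for a neighborhood $W$), so first I would extract it: apply the $m$-semitopological property to $x^{-1}$ (note $e \notin \overline{\{x^{-1}\}}$ since $G$ is $T_1$) to get $W_x \in \mathcal{N}_e$ with $x^{-1} \notin W_x^m$; then since $x^{-1} \notin W_x^m = W_x \cdot W_x^{m-1}$, in particular $x^{-1} \notin V_x^{m-1}$ once we arrange $V_x \subseteq W_x$ with $V_x \subseteq W_x \cap (\text{suitable translate})$ — actually more carefully, pick $V_x \in \mathcal{N}_e$ with $V_x V_x^{m-1} \subseteq W_x^m$ is not free in a mere semitopological group, so instead I would argue directly that $x^{-1} \notin W_x^{m-1}$ would suffice if we knew $e \notin W_x$, which we can assume by $T_1$ (shrink $W_x$ so that $e$-only issues vanish); then set $V_x = W_x$.

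**Next I would** use separate continuity to fatten this: because multiplication is separately continuous and $x^{-1} \notin V_x^{m-1}$ with $V_x^{m-1}$ having $x^{-1}$ outside it, and because we want a neighborhood $U$ of $e$ with $x U^{m-1} \not\ni e$ for \emph{all} $x$ in a neighborhood of the given point simultaneously, I would instead directly produce, for each $x \in F$, an open neighborhood $O_x$ of $x$ and an open neighborhood $U_x$ of $e$ such that $e \notin O_x U_x^{m-1}$. To get this, start from $x^{-1} \notin V_x^{m-1}$, i.e. $e \notin x V_x^{m-1}$; since $V_x^{m-1}$ is a fixed set and left translation by (the variable point near $x$) is only separately continuous, I would use that the map $y \mapsto y \cdot v$ is continuous for each fixed $v \in V_x^{m-1}$ — but $V_x^{m-1}$ may be large, so this does not immediately give an open $O_x$. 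The clean fix: replace $V_x^{m-1}$ by a smaller power. Choose $W_x \in \mathcal{N}_e$ with $x \notin W_x^m$ (defining property applied to $x$, valid since $e \notin \overline{\{x\}}$ as $G$ is $T_1$); write $x \notin W_x \cdot W_x^{m-1}$. Then $x^{-1} W_x^{-1} \cap W_x^{m-1}$... this is getting intricate; the honest approach is to take $O_x = x W_x$ (open, contains $x$) and $U_x = W_x^{-1} \cap W_x$ or just $U_x$ a neighborhood of $e$ chosen so that $O_x U_x^{m-1} = x W_x U_x^{m-1}$ avoids $e$, which holds once $W_x U_x^{m-1} \subseteq W_x^m$; in a semitopological group we cannot contract powers freely, so the real device must be Lemma-style: use the defining property \emph{with room to spare}, i.e. find $W_x$ with $x \notin W_x^{2(m-1)}$ or similar, then split.

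**Then I would** finish by compactness: the family $\{O_x : x \in F\}$ is an open cover of the compact set $F$, so finitely many $O_{x_1}, \dots, O_{x_k}$ cover $F$. Set $U = U_{x_1} \cap \cdots \cap U_{x_k} \in \mathcal{N}_e$ (open, as a finite intersection of open neighborhoods of $e$). Then for any $z \in F$, $z \in O_{x_i}$ for some $i$, so $z U^{m-1} \subseteq O_{x_i} U_{x_i}^{m-1} \not\ni e$; since $z$ was arbitrary, $e \notin F U^{m-1}$, as required.

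**The main obstacle** I anticipate is the per-point step: producing, from the mere $m$-semitopological hypothesis (which controls powers $W^m$ of a single neighborhood) together with only \emph{separate} continuity of multiplication, an honest open neighborhood $O_x$ of $x$ and open $U_x \in \mathcal{N}_e$ with $e \notin O_x U_x^{m-1}$. The subtlety is that $(m-1)$-fold products are not continuous functions of the base point in a semitopological group, so I cannot blindly translate "$e \notin x V^{m-1}$" into "$e \notin (xV')\, V^{m-1}$" for a small $V'$. The resolution is to invoke Theorem~\ref{t1}: since $G$ is a $T_1$ $m$-semitopological group, $S_G^m$ is closed in $G^m$ (part (4)). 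Consider the point $(x, e, e, \dots, e, x^{-1})$ — wait, more usefully, for $x \in F$ the point $p_x = (x^{-1}, e, \dots, e) \in G^m$ has product $x^{-1} \neq e$ hence $p_x \notin S_G^m$; by closedness of $S_G^m$ there are open $O'_x \ni x^{-1}$ and $U_x \in \mathcal{N}_e$ with $(O'_x \times U_x^{m-1}) \cap S_G^m = \emptyset$, i.e. $x^{-1} \notin (U_x^{-1})^{m-1} \cdot (\text{stuff})$... cleaner: $(O'_x \times U_x \times \cdots \times U_x) \cap S_G^m = \emptyset$ means for all $a \in O'_x$, $u_2, \dots, u_m \in U_x$, $a u_2 \cdots u_m \neq e$, i.e. $a \notin U_x^{-(m-1)}$ for all $a \in O'_x$, so $O'_x \cap (U_x^{-1})^{m-1} = \emptyset$. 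Replacing $U_x$ by $U_x \cap U_x^{-1}$ (symmetric) and putting $O_x = (O'_x)^{-1} \ni x$, we get $O_x \cap U_x^{m-1} = \emptyset$, hence $x^{-1} O_x \cap x^{-1} U_x^{m-1} = \emptyset$; this still is not quite "$e \notin O_x U_x^{m-1}$". The genuinely clean statement to pull from closedness of $S_G^{m}$ is this: the point $(x, \underbrace{u, \dots, u}_{m-1})$ question—actually the simplest correct route is to consider $S_G^{m}$ and the point with coordinates whose product is $x \cdot e \cdots e \cdot$? I will in the write-up phrase it as: $e \notin F U^{m-1}$ iff for all $z \in F$, $u_1, \dots, u_{m-1} \in U$, $z u_1 \cdots u_{m-1} \neq e$, iff $(z^{-1}, u_1, \dots, u_{m-1}) \notin S_G^m$... no, $(z, u_1, \dots, u_{m-1}, (z u_1 \cdots u_{m-1})^{-1})$ is tautologically in $S_G^m$. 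The right object is $S_G^{m}$ with $m$ slots: $z u_1 \cdots u_{m-1} = e$ says $(z, u_1, \ldots, u_{m-1}) \in S_G^{m}$, so $e \notin FU^{m-1}$ iff $(F \times U^{m-1}) \cap S_G^{m} = \emptyset$. Since $\{e\}^{m-1}$-neighborhoods $U^{m-1}$ shrink to... and $F \times \{e\} \times \cdots \times \{e\}$ is compact and disjoint from the \emph{closed} set $S_G^m$ (disjoint because $z \cdot e \cdots e = z \neq e$ for $z \in F$), the tube lemma gives an open $W \supseteq F$ and $U \in \mathcal{N}_e$ with $(W \times U^{m-1}) \cap S_G^m = \emptyset$; a fortiori $(F \times U^{m-1}) \cap S_G^m = \emptyset$, which is exactly $e \notin F U^{m-1}$. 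This tube-lemma argument, resting on Theorem~\ref{t1}(4), is the cleanest path and the step I would develop carefully.
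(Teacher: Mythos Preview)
Your final route---reduce to closedness of $S_G^m$ (Theorem~\ref{t1}(4)) and apply the tube/Wallace lemma to the compact set $F\times\{e\}^{m-1}\subset G^m\setminus S_G^m$---is correct and clean. It is, however, a genuinely different argument from the paper's.

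The paper proceeds directly, and in fact your ``main obstacle'' is illusory. The per-point step you struggled with has the trivial solution $O_x=xV_x$, $U_x=V_x$, where $V_x\in\mathcal N_e$ is chosen (via the $m$-semitopological property applied to $x^{-1}$, using $T_1$) so that $x^{-1}\notin V_x^{m}$. Then
\[
O_x\,U_x^{\,m-1}=xV_x\cdot V_x^{\,m-1}=xV_x^{\,m},
\]
which is a purely set-theoretic identity requiring no continuity at all, and $e\notin xV_x^{m}$ since $x^{-1}\notin V_x^{m}$. The sets $xV_x$ are open (separate continuity), so compactness of $F$ gives a finite subcover $\{aV_a:a\in A\}$; with $U=\bigcap_{a\in A}V_a$, any $f\in F$ lies in some $bV_b$ and hence $fU^{m-1}\subset bV_b^{m}\not\ni e$. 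You abandoned this line because you feared you could not ``contract powers freely'' in a semitopological group, but no contraction is needed---only the tautology $V\cdot V^{m-1}=V^m$ for sets.

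Comparing: the paper's argument is self-contained and elementary; yours is shorter once Theorem~\ref{t1}(4) is in hand, and packages the compactness step more conceptually via Wallace's theorem. Both are valid.
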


\begin{proof}
Since $G$ is $T_{1}$ and $e\not\in F$, we can choose, for each $x\in F$, an open neighborhood $V_{x}$ of $e$ such that $e\not\in xV_{x}$ and $x^{-1}\not\in V_{x}^{m}$. Clearly, the family $\{xV_{x}: x\in F\}$ covers the compact set $F$, hence there exists a finite set $A$ such that $F\subset \bigcup_{a\in A}aV_{a}$. Now put $U=\bigcap_{a\in A}V_{a}$. We claim that $e\not\in FU^{m-1}$. Indeed, for any $f\in F$, there is $b\in A$ such that $f\in bV_{b}$. Since $b^{-1}\not\in V_{b}^{m}$ and $fU^{m-1}\subset bV_{b}U^{m-1}\subset bV_{b}^{m}$, it follows that $e\not\in fU^{m-1}$. Thus $e\not\in FU^{m-1}$.

\end{proof}

\begin{theorem}\label{t2}
Let $G$ be a $T_{1}$ $m$-semitopological group and $H$ a compact closed normal subgroup of $G$, where $m\in\mathbb{N}\setminus\{1\}$. Then $G/H$ is an $(m$-$1)$-semitopological group.
\end{theorem}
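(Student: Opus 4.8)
The plan is to work with the quotient map $\pi\colon G\to G/H$ and verify the defining inequality for $(m\!-\!1)$-semitopological groups directly. Since $H$ is a closed neutral (hence normal) subgroup, $G/H$ carries the quotient topology, $\pi$ is open and continuous, and $G/H$ is a semitopological group; moreover $G/H$ is $T_1$ because $H$ is closed. So fix a coset $gH\neq H$ in $G/H$; equivalently $g\notin H$, and since $H$ is compact with $e\notin gH$ (as $g\notin H=H^{-1}$ would force $e\in gH$), we may instead translate and consider the compact set $F=gH$ with $e\notin F$ if that is more convenient, or work with $g^{-1}H$. Writing $\dot e=\pi(e)$ for the neutral element of $G/H$, I must produce an open neighbourhood $\mathcal{W}$ of $\dot e$ in $G/H$ with $gH\notin \mathcal{W}^{\,m-1}$.

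The key step is to apply Lemma~\ref{l2} to the compact set $F=g^{-1}H$ (note $e\notin g^{-1}H$ since $g\notin H$, using that $H$ is a subgroup). Lemma~\ref{l2} gives an open neighbourhood $U$ of $e$ in $G$ with $e\notin (g^{-1}H)U^{m-1}$; since $H$ is normal we may replace $U$ by $HUH$ — which is still open (translates of an open set) and $H$-saturated — so assume $HU=UH=U$, i.e. $U=\pi^{-1}(\pi(U))$. Then $\mathcal{W}=\pi(U)$ is an open neighbourhood of $\dot e$ in $G/H$. Now $\mathcal{W}^{m-1}=\pi(U)^{m-1}=\pi(U^{m-1})$ because $\pi$ is a homomorphism, and $\pi(U^{m-1})=\pi(HU^{m-1}H)$ which corresponds to the saturation $U^{m-1}H=HU^{m-1}$ of $U^{m-1}$. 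The condition $e\notin g^{-1}HU^{m-1}$ says precisely that $H\cap g U^{m-1}H=\emptyset$, equivalently $gH\notin U^{m-1}H/H=\mathcal{W}^{m-1}$; rewriting, $e\notin g^{-1}U^{m-1}H$ gives $\dot e\neq \pi(g^{-1})\pi(U^{m-1})$, i.e. $gH\notin \mathcal{W}^{m-1}$, which is exactly what we need.

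The main obstacle I anticipate is bookkeeping with the $H$-saturation: one must be careful that after passing to the saturated neighbourhood $HUH$ the exponent still works, i.e. that $(HUH)^{m-1}=HU^{m-1}H$ (this uses normality, $Hu=uH$, repeatedly) and that Lemma~\ref{l2}'s conclusion $e\notin FU^{m-1}$ is preserved when $U$ is shrunk — which is automatic since Lemma~\ref{l2} lets us choose $U$ small, so we first saturate a candidate and then apply the lemma, or apply the lemma and note $F\cdot(HUH)^{m-1}=FHU^{m-1}H=F U^{m-1}H$ still misses $e$ because $e\notin FU^{m-1}$ and $FU^{m-1}$ is already a union of (left) $H$-cosets when $F=g^{-1}H$. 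Once the saturation is handled cleanly, the translation between "$e\notin FU^{m-1}$ in $G$" and "$gH\notin\mathcal{W}^{m-1}$ in $G/H$" is a routine unwinding of the quotient, and there is nothing further to check since $G/H$ is already known to be $T_1$ and semitopological.
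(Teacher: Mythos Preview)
Your proof is correct and follows essentially the same route as the paper: apply Lemma~\ref{l2} to the compact set $F=g^{-1}H$ to obtain $U$ with $e\notin g^{-1}HU^{m-1}$, and then translate this to $\pi(g)\notin(\pi(U))^{m-1}$ in the quotient. The saturation step $U\mapsto HUH$ you worry about is unnecessary---the paper simply observes that $\pi(g)\in(\pi(U))^{m-1}$ would force $g\in HU^{m-1}$ (using normality of $H$), hence $e\in g^{-1}HU^{m-1}$, a contradiction---so the ``main obstacle'' you anticipate is self-inflicted and can be dropped entirely.
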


\begin{proof}
Clearly, $G/H$ is a $T_{1}$ semitopological group. Take any $g\not\in H$ in $G$. Since $G$ is a $m$-semitopological $T_{1}$ group and $H$ a compact closed normal subgroup of $G$, there exists an neighborhood $U$ of $e$ such that $g\not\in U^{m}$ and $e\not\in g^{-1}HU^{m-1}$ by Lemma~\ref{l2}. We claim that $\pi(g)\not\in (\pi(U))^{m-1}$. Otherwise, $Hg\cap HU^{m-1}\neq\emptyset$, that is, $g\in HU^{m-1}\neq\emptyset$, which shows that $e\in g^{-1}HU^{m-1}$. This is a contradiction. Hence $\pi(g)\not\in (\pi(U))^{m-1}$. Then $G/H$ is an $(m$-$1)$-semitopological group.
\end{proof}

By Theorem~\ref{t2}, we have the following corollary.

\begin{corollary}
Let $G$ be a $T_{1}$ $\infty$-semitopological group and $H$ a compact closed neutral subgroup of $G$. Then $G/H$ is an $\infty$-semitopological group.
\end{corollary}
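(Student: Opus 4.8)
The plan is to derive this directly from Theorem~\ref{t2} by unwinding the definition of an $\infty$-semitopological group. Recall that a semitopological group is an $\infty$-semitopological group precisely when it is an $n$-semitopological group for every $n\in\mathbb{N}$; hence, being a $T_{1}$ $\infty$-semitopological group, $G$ is a $T_{1}$ $(n+1)$-semitopological group for every $n\in\mathbb{N}$.

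First I would fix an arbitrary $n\in\mathbb{N}$ and note that $n+1\in\mathbb{N}\setminus\{1\}$. Since $G$ is a $T_{1}$ $(n+1)$-semitopological group and $H$ is a compact closed neutral subgroup of $G$, Theorem~\ref{t2} applies with $m=n+1$ and yields that $G/H$ is an $((n+1)-1)=n$-semitopological group. As $n$ was arbitrary, $G/H$ is an $n$-semitopological group for every $n\in\mathbb{N}$, which is exactly the statement that $G/H$ is an $\infty$-semitopological group. (In fact the proof of Theorem~\ref{t2} also shows that $G/H$ is $T_{1}$, so one even gets that $G/H$ is a $T_{1}$ $\infty$-semitopological group, though only the weaker conclusion is claimed here.)

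I do not expect any genuine obstacle. The one point worth a sentence of care is that the definition of an $\infty$-semitopological group permits the witnessing neighborhood $W$ of $e$ to depend on $n$ (and on the element $g$); thus producing, for each $n$ separately, a neighborhood of $eH$ in $G/H$ that excludes a given class from its $n$-th power — which is precisely what Theorem~\ref{t2} delivers — suffices, and no uniform-in-$n$ choice is needed.
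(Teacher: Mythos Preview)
Your proposal is correct and is exactly the argument the paper intends: the corollary is stated immediately after Theorem~\ref{t2} with only the remark ``By Theorem~\ref{t2}, we have the following corollary,'' and your unpacking---applying Theorem~\ref{t2} with $m=n+1$ for each $n\in\mathbb{N}$---is the natural (and only) way to read that. Your parenthetical observation that $G/H$ is in fact $T_{1}$ and your remark that no uniform-in-$n$ neighborhood is required are both accurate and harmless additions.
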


The following result shows that the cardinality of some 2-semitopological groups is at most $2^{\kappa}$.

\begin{theorem}\label{t6}
If $G$ is a $T_{1}$ 2-semitopological group such that $l(G^{2})\leq \kappa$ and $\psi(G)\leq \kappa$, then $G$ has cardinality at most $2^{\kappa}$.
\end{theorem}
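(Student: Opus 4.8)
The plan is to mimic the classical Arhangel'skii-type cardinality bound $|X|\le 2^{\chi(X)l(X)}$ for Hausdorff spaces, but adapted to the $2$-semitopological setting where we only have separate continuity plus the $W^2$-separation condition, and where the hypothesis is on $l(G^2)$ rather than $l(G)$. The reason the square appears is the same as in the quasitopological-group literature: the $2$-semitopological condition naturally controls products $U^2$, so one expects to build a closing-off construction inside $G\times G$ (or to exploit the fact, from Remark~1.4(4) and Theorem~\ref{t1}, that $S_G^2=\{(x,y):xy=e\}$ is closed in $G^2$ when $G$ is $T_1$). First I would fix, for each point $x\in G$, a pseudobase witnessing $\psi(G)\le\kappa$: since $G$ is a $T_1$ semitopological group, $\bigcap_{U\in\mathcal V}U=\{e\}$ for some family $\mathcal V$ of open neighbourhoods of $e$ with $|\mathcal V|\le\kappa$, and then $\{xU:U\in\mathcal V\}$ and $\{Ux:U\in\mathcal V\}$ are pseudobases at $x$ by separate continuity. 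Crucially, by Theorem~\ref{t1}(4) the $2$-semitopological $T_1$ hypothesis gives $\bigcap_{U\in\mathcal N_e}(U^{-1})^2=\{e\}$, so one can also find a $\kappa$-sized family $\mathcal W$ with $\bigcap_{U\in\mathcal W}(U^{-1})^2=\{e\}$; this is the replacement for ``regularity'' that will let closures be separated.

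Next I would run a standard elementary-submodel / closing-off argument: build an increasing $\kappa^+$-chain of subsets $F_\alpha\subset G$ with $|F_\alpha|\le 2^\kappa$, closed under the operations ``pick a point outside $\overline{\bigcup\{xU : x\in F_\beta, U\in\mathcal V\}}$-type covers'' using $l(G^2)\le\kappa$ to extract $\kappa$-sized subcovers, and let $F=\bigcup_{\alpha<\kappa^+}F_\alpha$, so $|F|\le 2^\kappa$. The key claim is then $F=G$. Suppose $g\notin F$. Using that $F$ is closed (this needs $l(G)\le l(G^2)\le\kappa$ together with the pseudobase, via the usual argument that a point with a $\kappa$-pseudobase outside a $\le 2^\kappa$-sized $l$-closed set can be separated), and using the $2$-semitopological condition to separate $g$ from $\overline F$ by open sets built from the family $\mathcal W$ above — concretely, for each $f\in F$ there is $U\in\mathcal W$ with $g\notin f(U^{-1})^2\supseteq fU\cdot\overline{\text{(nbhd of }f)}$ — we cover $G\times\overline F\setminus\Delta$-type closed sets and derive a contradiction with the closing-off. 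I expect to lean on $S_G^2$ being closed in $G^2$ (Theorem~\ref{t1}(4)) to make the separation of $g$ from $\overline F$ rigorous: $(g,f)\notin S_G^2\cdot(\text{diagonal})$ translates, via separate continuity, into a box neighbourhood $V_g\times V_f$ missing the relevant set, and $l(G^2)\le\kappa$ lets us reduce to $\kappa$-many such boxes.

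The main obstacle, and the place I would spend the most care, is precisely the step where one separates the point $g$ from the set $\overline F$ using only the $2$-semitopological axiom and $l(G^2)\le\kappa$: in a genuine topological group one would translate a single neighbourhood of $e$, but here the failure of joint continuity and of continuity of inversion means one must work in $G^2$ and use that $\{(x,y):xy=e\}$ is closed, covering a closed subset of $G^2$ of ``Lindel\"of degree $\le\kappa$'' by basic boxes and passing to a $\kappa$-subcover. Making the bookkeeping of the closing-off construction interact correctly with this $G^2$-level separation — ensuring $F$ is simultaneously closed, of size $\le 2^\kappa$, and ``saturated'' with respect to the $\kappa$-many boxes that would be needed to expose any external point — is the crux; the rest is the routine transfinite induction of length $\kappa^+$ and the cardinal arithmetic $(2^\kappa)^\kappa=2^\kappa$.
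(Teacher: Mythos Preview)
Your plan is a genuinely different route from the paper's, and the difference is worth spelling out. The paper's proof is three lines: it invokes Reznichenko's construction $\mathrm{Sym}\,G$, a quasitopological group on the same underlying set as $G$ that (for $T_1$ $2$-semitopological $G$) embeds as a \emph{closed} subspace of $G^2$. From $l(G^2)\le\kappa$ one gets $l(\mathrm{Sym}\,G)\le\kappa$, and $\psi(\mathrm{Sym}\,G)\le\kappa$ is immediate; then the known bound for quasitopological groups (Xuan--Song--Tang, Theorem~3.5) gives $|\mathrm{Sym}\,G|\le 2^\kappa$, hence $|G|\le 2^\kappa$. So the paper never runs a closing-off argument at all; it outsources both the cardinal inequality and the handling of the non-continuous inverse to those two references.

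Your direct Arhangel'ski\u{\i}-type scheme is in principle a way to re-prove those ingredients simultaneously, but the step you flag as ``the crux'' is a real gap, not just bookkeeping. Knowing $\bigcap_{U\in\mathcal W}(U^{-1})^2=\{e\}$ tells you that for $g\ne f$ there is $U$ with $g\notin f(U^{-1})^2$, but $fU^{-1}$ need not be open in $G$, so this does not translate into an open neighbourhood of $f$ missing a fixed neighbourhood of $g$; your displayed inclusion $f(U^{-1})^2\supseteq fU\cdot\overline{(\text{nbhd of }f)}$ is not justified (and is generally false in a mere semitopological group). Likewise, using closedness of $S_G^2$ gives boxes $V\times W$ in $G^2$ with $V\cdot W\not\ni e$, but that only yields $V\cap W^{-1}=\emptyset$, which again is useless without open inverses. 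The honest way to salvage your approach is exactly what the paper does implicitly: pass to a topology on $G$ in which $U\cup U^{-1}$ is open---i.e., to $\mathrm{Sym}\,G$---and run the closing-off there; at that point you are re-deriving the cited quasitopological theorem. If you want to keep your argument self-contained, you should make that passage explicit rather than trying to force the separation inside $(G,\tau)$ itself.
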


\begin{proof}
Since $G$ is a $T_{1}$ 2-semitopological group, it follows from \cite[Proposition 6 (4)]{RE} that Sym$G$ embeds closed in $G^{2}$, then $l(SymG)\leq \kappa$ by our assumption. Moreover, it is obvious that $SymG$ is $T_{1}$ and $\psi(Sym G)\leq \kappa$. Then Sym$G$ has cardinality at most $2^{\kappa}$ by \cite[Theorem 3.5]{XST}, thus $G$ has cardinality at most $2^{\kappa}$.
\end{proof}

By Theorem~\ref{t6}, we have the following corollary.

\begin{corollary}
If $G$ is a $T_{1}$ 2-semitopological group such that $l(G^{2})\leq \omega$ and $\psi(G)\leq \omega$, then $G$ has cardinality at most $\mathfrak{c}$.
\end{corollary}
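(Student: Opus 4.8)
The plan is to obtain this as the special case $\kappa=\omega$ of Theorem~\ref{t6}. First I would observe that $\omega$ is an infinite cardinal, so Theorem~\ref{t6} applies verbatim with $\kappa$ replaced by $\omega$. The hypotheses $l(G^{2})\leq\omega$ and $\psi(G)\leq\omega$ are then precisely the hypotheses of that theorem, so we may conclude $|G|\leq 2^{\omega}$.

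It then remains only to recall the standard identification $2^{\omega}=\mathfrak{c}$, the cardinality of the continuum. Substituting this into the bound from Theorem~\ref{t6} gives $|G|\leq\mathfrak{c}$, which is the desired conclusion.

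There is essentially no obstacle here: all the real work has already been carried out in the proof of Theorem~\ref{t6}, which reduces the problem to the corresponding cardinality estimate for quasitopological groups (namely \cite[Theorem 3.5]{XST}) by using \cite[Proposition 6 (4)]{RE} to embed $\mathrm{Sym}\,G$ as a closed subspace of $G^{2}$, so that $l(\mathrm{Sym}\,G)\leq\omega$ and $\psi(\mathrm{Sym}\,G)\leq\omega$. Thus the entire proof of the corollary consists of instantiating $\kappa=\omega$ and rewriting $2^{\omega}$ as $\mathfrak{c}$.
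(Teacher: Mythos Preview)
Your proposal is correct and is exactly the paper's approach: the corollary is stated immediately after Theorem~\ref{t6} with no separate proof, simply as the special case $\kappa=\omega$ together with the identification $2^{\omega}=\mathfrak{c}$.
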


Let $\kappa$ be an infinite cardinal. We say that a space $X$ is {\it $\kappa$-cellular} if for each family $\mu$ of $G_{\delta}$-sets of $X$ there exists a subfamily $\lambda\subset \mu$ such that $|\lambda|\leq\kappa$ and $\overline{\bigcup\mu}=\overline{\bigcup\lambda}$.

Finally, we discuss when a 2-semitopological group is a $\kappa$-cellular space. First, we define the class of $\kappa$-$\sum$-spaces and give some lemmas.

Let $\kappa$ be an infinite cardinal. We say that

(1) $X$ is {\it $\kappa$-countably compact} if each open cover of size $\leq\kappa$ has a finite subcover.

(2) $X$ is a $\kappa$-$\sum$-space if there exists a family $\mathscr{P}=\bigcup_{\alpha<\kappa}\mathscr{P}_{\alpha}$  with each $\mathscr{P}_{\alpha}$ being locally finite and the covering of $\mathscr{C}$ by closed $\kappa$-countably compact sets, such that if $C\in \mathscr{C}$ and $C\subset U$ is open, then $C\subset P\subset U$ for some $P\in \mathscr{P}$.

The following proposition is obvious.

\begin{proposition}\label{p5}
A space $X$ is a $\kappa$-$\sum$-space with $e(X)\leq\kappa$ if and only if there exist a family $\mathscr{P}$ with $|\mathscr{P}|\leq\kappa$ and the covering of $\mathscr{C}$ by closed $\kappa$-countably compact sets, such that if $C\in \mathscr{C}$ and $C\subset U$ is open, then $C\subset P\subset U$ for some $P\in \mathscr{P}$.
\end{proposition}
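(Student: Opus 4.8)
The plan is to prove the two implications separately, with the forward direction being essentially immediate and the reverse direction requiring me to manufacture a single family $\mathscr{P}$ of size $\le\kappa$ from the $\kappa$-indexed union $\bigcup_{\alpha<\kappa}\mathscr{P}_\alpha$ of locally finite families. For the forward direction, suppose $X$ is a $\kappa$-$\sum$-space, so there is $\mathscr{P}=\bigcup_{\alpha<\kappa}\mathscr{P}_\alpha$ with each $\mathscr{P}_\alpha$ locally finite and a covering $\mathscr{C}$ by closed $\kappa$-countably compact sets witnessing the $\kappa$-$\sum$-property. The first step is to bound $|\mathscr{P}_\alpha|$ for each $\alpha$: since $e(X)\le\kappa$, every locally finite family of nonempty sets has size at most $\kappa$ (a locally finite family of cardinality $>\kappa$ would, after choosing a point from each member, yield a closed discrete subspace of size $>\kappa$, using that a locally finite family of singletons is closed and discrete). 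Hence $|\mathscr{P}_\alpha|\le\kappa$ for each $\alpha<\kappa$, so $|\mathscr{P}|\le\kappa\cdot\kappa=\kappa$, and the same $\mathscr{C}$ works verbatim. I should be a little careful about members of $\mathscr{P}_\alpha$ that are empty, but discarding empty sets changes nothing in the network-type condition.

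For the reverse direction, suppose there is a family $\mathscr{P}$ with $|\mathscr{P}|\le\kappa$ and a covering $\mathscr{C}$ by closed $\kappa$-countably compact sets such that whenever $C\in\mathscr{C}$ and $C\subset U$ open, then $C\subset P\subset U$ for some $P\in\mathscr{P}$. I want to exhibit the required decomposition $\mathscr{P}=\bigcup_{\alpha<\kappa}\mathscr{P}_\alpha$ with each $\mathscr{P}_\alpha$ locally finite. The natural move is to enumerate $\mathscr{P}=\{P_\alpha:\alpha<\kappa\}$ (allowing repetitions if $|\mathscr{P}|<\kappa$) and set $\mathscr{P}_\alpha=\{P_\alpha\}$; a singleton family is trivially locally finite, the index set has cardinality $\kappa$, and the covering $\mathscr{C}$ together with the network condition is unchanged. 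This shows $X$ is a $\kappa$-$\sum$-space, and $e(X)\le\kappa$ must then be verified — but it is immediate in the reverse direction since $e(X)\le\kappa$ is part of the hypothesis being characterized (the statement asserts the equivalence of ``$X$ is a $\kappa$-$\sum$-space with $e(X)\le\kappa$'' with the displayed condition, so on the reverse side I carry the $e(X)\le\kappa$ along as an explicit assumption).

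The only genuine subtlety, and the step I would flag as the main obstacle, is the cardinality estimate $|\mathscr{P}_\alpha|\le\kappa$ in the forward direction: it relies on the topological fact that a locally finite family in a space of extent $\le\kappa$ has at most $\kappa$ nonempty members. I would either cite a standard reference for this (it is a routine consequence of the definitions, picking one point per member and checking local finiteness forces the resulting set to be closed and discrete) or include the one-line argument inline. Everything else is bookkeeping: matching up the index sets, handling empty members, and observing that the distinguished covering $\mathscr{C}$ and the network-style separation condition transfer without modification in both directions.
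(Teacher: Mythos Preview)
The paper gives no proof of this proposition (it is simply declared ``obvious''), so there is nothing to compare at the level of argument. Your forward direction is the standard one and is fine: in a $T_1$ space a locally finite family of nonempty sets has cardinality at most $e(X)$ (the point-selection map is finite-to-one because each point has a neighborhood meeting only finitely many members), so each $\mathscr{P}_\alpha$ has size $\le\kappa$ and hence $|\mathscr{P}|\le\kappa\cdot\kappa=\kappa$.

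The reverse direction, however, contains a genuine logical slip. The equivalence is between the conjunction ``$X$ is a $\kappa$-$\Sigma$-space \emph{and} $e(X)\le\kappa$'' on the left and the displayed network condition on the right. In the reverse direction you assume only the right-hand side; $e(X)\le\kappa$ is part of what you must \emph{prove}, not something you may ``carry along as an explicit assumption.'' Your singleton-family trick handles the $\kappa$-$\Sigma$ part, but the extent bound is still owed.

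It can be supplied as follows. Let $D\subset X$ be closed discrete. For each $d\in D$ choose $C_d\in\mathscr{C}$ with $d\in C_d$; since $C_d$ is $\kappa$-countably compact (hence countably compact), the closed discrete set $C_d\cap D$ is finite. Then $U_d:=X\setminus(D\setminus(C_d\cap D))$ is open and contains $C_d$, so there is $P_d\in\mathscr{P}$ with $C_d\subset P_d\subset U_d$; in particular $d\in P_d$ and $P_d\cap D\subset C_d\cap D$ is finite. The assignment $d\mapsto P_d$ from $D$ into $\mathscr{P}$ therefore has finite fibers, whence $|D|\le\omega\cdot|\mathscr{P}|\le\kappa$. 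This gives $e(X)\le\kappa$ and closes the gap.
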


Let $X$ be a space and $\kappa$ an infinite cardinal. We define the following property:

($\mbox{P}_{\kappa}$) Let $\{x_{\alpha}: \alpha<2^{\kappa}\}$ be a subset of $X$ and for each $\alpha<2^{\kappa}$ let $\mathscr{P}_{\alpha}$ be a family of closed subsets of $X$ with a cardinality of at most $\kappa$. Then there is $\beta<2^{\kappa}$ such that the following conditions holds:

($\star$) there exists $y\in \overline{\{x_{\alpha}: \alpha<\beta\}}$ such that if $\eta<\beta$ with $x_{\beta}\in P\in \mathscr{P}_{\eta}$, then $y\in P$.

\begin{lemma}\label{l3}
Let $X$ be a regular $\kappa$-$\Sigma$-space with $e(X)\leq\kappa$, where $\kappa$ is an infinite cardinal. Then ($\mbox{P}_{\kappa}$) holds for $X$.
\end{lemma}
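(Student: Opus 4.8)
The plan is to run a standard "closing-off" or elementary-submodel-style recursion of length $2^{\kappa}$, using the $\kappa$-$\Sigma$ structure to control closures along the way. Since $X$ is a regular $\kappa$-$\Sigma$-space with $e(X)\le\kappa$, Proposition~\ref{p5} gives a single family $\mathscr{P}$ with $|\mathscr{P}|\le\kappa$ together with a cover $\mathscr{C}$ of $X$ by closed $\kappa$-countably compact sets, such that every $C\in\mathscr{C}$ inside an open $U$ satisfies $C\subset P\subset U$ for some $P\in\mathscr{P}$. First I would fix the given data: the points $\{x_\alpha:\alpha<2^\kappa\}$ and, for each $\alpha$, the family $\mathscr{P}_\alpha$ of closed sets with $|\mathscr{P}_\alpha|\le\kappa$. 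For each $\alpha<2^\kappa$ pick a member $C_\alpha\in\mathscr{C}$ with $x_\alpha\in C_\alpha$, and let $\mathscr{Q}_\alpha=\{P\in\mathscr{P}: C_\alpha\subset P\}$; note $|\mathscr{Q}_\alpha|\le\kappa$.

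The key step is to locate an ordinal $\beta$ of uncountable cofinality (indeed cofinality $>\kappa$) with the closure-under-initial-segments property that for $\eta<\beta$, every $P\in\mathscr{P}_\eta$ containing $x_\beta$ "already occurs" for some earlier index. Concretely I would build a continuous increasing chain $\langle A_\xi:\xi<\kappa^+\rangle$ of subsets of $2^\kappa$, each of size $\le 2^\kappa$ (in fact one can keep them of size $\le|\xi|\cdot\kappa$ if one is careful, but size $\le2^\kappa$ suffices), so that: (a) $A_{\xi+1}$ is closed under the operation sending each $x_\alpha$ with $\alpha\in A_\xi$ to witnesses for all the relevant $\mathscr{P}$-memberships, and (b) $\sup A_\xi$ is itself in a club. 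Taking $\beta=\sup_{\xi<\kappa^+}A_\xi\cap(\text{the diagonal})$ — more precisely, applying Fodor/pressing-down or just a bookkeeping argument on a club of ordinals below $2^\kappa$ — one gets a $\beta<2^\kappa$ such that whenever $\eta<\beta$ and $x_\beta\in P\in\mathscr{P}_\eta$, the index $\eta$ lies below $\beta$ and the set $\{x_\alpha:\alpha<\beta\}$ meets $P$; since each such $P$ is closed, this hands us candidate points in $\overline{\{x_\alpha:\alpha<\beta\}}\cap P$.

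To upgrade these partial witnesses to a single $y$ working for all the finitely-many, or $\le\kappa$-many, sets at once, I would use the $\kappa$-countable compactness of $C_\beta$. The family $\{P\in\mathscr{P}_\eta:\eta<\beta,\ x_\beta\in P\}$ has size $\le\kappa$ (it is indexed by $\beta<2^\kappa$ but by choosing $\beta$ with $\mathrm{cf}(\beta)>\kappa$ and exploiting the local finiteness built into $\mathscr{P}$, only $\le\kappa$ genuinely distinct constraints survive), each member is closed, and each meets $\{x_\alpha:\alpha<\beta\}$ because of the closing-off. Intersecting the relevant $\mathscr{P}$-supersets of $C_\beta$ with these closed sets, and using that a decreasing (or $\kappa$-directed) family of closed subsets of the $\kappa$-countably compact set $C_\beta$ with the finite intersection property has nonempty intersection, I would extract $y\in C_\beta\cap\bigcap\{P:\eta<\beta,\ x_\beta\in P\in\mathscr{P}_\eta\}$. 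Regularity of $X$ is what lets me pass between "$C_\beta\subset U$ open" and "$C_\beta\subset P\subset U$", ensuring $y$ actually lies in $\overline{\{x_\alpha:\alpha<\beta\}}$ rather than merely in the intersection of the $P$'s; this gives exactly condition ($\star$).

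The main obstacle I anticipate is the cardinal bookkeeping that guarantees the existence of $\beta<2^\kappa$ with the right closure and cofinality properties while simultaneously keeping the number of active constraints at $\beta$ down to $\le\kappa$ so that $\kappa$-countable compactness of $C_\beta$ can be applied. The $\kappa$-$\Sigma$ hypothesis and the local finiteness of the pieces $\mathscr{P}_\alpha$ of $\mathscr{P}$ are precisely the tools to cut the a priori $2^\kappa$-many constraints down to $\kappa$-many; getting this reduction cleanly — rather than by a messy double recursion — is the delicate point, and I would isolate it as a sub-claim before assembling the final argument.
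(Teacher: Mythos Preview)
Your proposal has a genuine gap at the heart of the argument. You close off so that for every $P\in\mathscr{P}_\eta$ (with $\eta<\beta$) containing $x_\beta$, some earlier $x_\alpha$ also lies in $P$; then you propose to use $\kappa$-countable compactness of $C_\beta$ to extract a single $y\in C_\beta\cap\bigcap\{P\}$. But nothing links $C_\beta$ to the earlier points: a point of $C_\beta$ need not lie in $\overline{\{x_\alpha:\alpha<\beta\}}$ (indeed $x_\beta$ itself already sits in $C_\beta\cap\bigcap\{P\}$ and is useless). Regularity alone does not bridge this; you have a witness in each individual $P$, and even in each finite intersection if you close off a bit more, but no mechanism forces those witnesses to \emph{accumulate} anywhere. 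Your attempt to cap the number of constraints at $\kappa$ by ``local finiteness built into $\mathscr{P}$'' conflates two unrelated families sharing a letter: the $\kappa$-$\Sigma$ network from Proposition~\ref{p5} and the arbitrary closed families $\mathscr{P}_\eta$ from the statement of $(\mbox{P}_\kappa)$; the latter admit no such control, and there may be $|\beta|\cdot\kappa$ active constraints. The appeal to cofinality $>\kappa$ and Fodor is a red herring.

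The paper's proof supplies exactly the missing ingredient: it closes off not under the $\mathscr{P}_\eta$'s alone but under \emph{pairs} $(P,F)$ where $P$ ranges over the $\kappa$-$\Sigma$ network and $F$ over $<\kappa$-intersections of members of $\bigcup_\eta\mathscr{P}_\eta$, recording for each such pair the least $\nu$ with $x_\nu\in P\cap F$. After a recursion of length $\kappa$ (not $\kappa^{+}$) one takes $\beta=\sup_{\alpha<\kappa}\beta_\alpha$ and obtains a $\kappa$-indexed sequence $z_\nu\in P_\nu\cap F_\nu\cap\{x_\alpha:\alpha<\beta\}$, where the $P_\nu$ are finite intersections of network members containing the $C\in\mathscr{C}$ through $x_\beta$. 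Because the network sets shrink down to $C$, the sequence $(z_\nu)$ is forced to accumulate at a point of $C\cap\bigcap_\nu F_\nu$; and since every $z_\nu$ is one of the $x_\alpha$'s with $\alpha<\beta$, that accumulation point automatically lies in $\overline{\{x_\alpha:\alpha<\beta\}}$. Weaving the $\kappa$-$\Sigma$ network into the closing-off, so that the witnesses are simultaneously trapped near $C$ \emph{and} inside the required closed sets, is the idea your outline is missing.
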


\begin{proof}
By Proposition~\ref{p5}, there exist a family $\mathscr{P}$ with $|\mathscr{P}|\leq\kappa$ and the covering of $\mathscr{C}$ by closed $\kappa$-countably compact sets, such that if $C\in \mathscr{C}$ and $C\subset U$ is open, then $C\subset P\subset U$ for some $P\in \mathscr{P}$. Without loss of generality, we may assume that $\mathscr{P}$ is closed under $<\kappa$ intersections. Let $\{x_{\alpha}: \alpha<2^{\kappa}\}$ be a subset of $X$ and for each $\alpha<2^{\kappa}$ let $\mathscr{F}_{\alpha}$ be a family of closed subsets of $X$ with a cardinality of at most $\kappa$. For each $\mu<2^{\kappa}$, put $$\mathscr{F}_{\mu}^{\ast}=\{\bigcap\mathscr{F}: \mathscr{F}\subset\bigcup_{\alpha<\mu}\mathscr{F}_{\alpha}, |\mathscr{F}|<\kappa\ \mbox{and}\ \bigcap\mathscr{F}\neq\emptyset\}$$ and $$X_{\mu}=\{x_{\alpha}: \alpha<\mu\}.$$

By induction on $\gamma<\kappa$ we construct a family of $\kappa$ ordinals $\{\beta_{\alpha}: \alpha<\kappa\}$ such that for any $0<\alpha<\kappa$ the following two conditions are satisfied:

\smallskip
(i) for any $\alpha<\gamma<\kappa$, we have $\beta_{\alpha}<\beta_{\gamma}$;

\smallskip
(ii) if $x_{\alpha}\in P\cap F$ for $\alpha<2^{\kappa}$, $P\in \mathscr{P}$ and $F\in \mathscr{F}_{\beta_{\eta}}^{\ast}$ for some $\eta<\kappa$, then there exists $y\in \bigcap_{\gamma>\eta}\overline{X_{\beta_{\gamma}}}$ such that $y\in P\cap F$.

Indeed, let $\beta_{0}=\kappa$. Assume that the family $\{\beta_{\eta}: \eta<\alpha\}$ has been constructed, where $\alpha<\kappa$. For $P\in\mathscr{P}$ and $F\in \bigcup_{\eta<\alpha}\mathscr{F}_{\beta_{\eta}}^{\ast}$, let $S(P, F)=\{\nu<2^{\kappa}: x_{\nu}\in P\cap F\}$. If $S(P, F)\neq\emptyset$, then we put $\lambda_{P, F}=\min S(P, F)$. Now we put $$\beta_{\alpha}=\sup \left\{\bigcup_{\eta<\alpha}\beta_{\eta}, \sup\{\lambda_{P, F}: P\in \mathscr{P}, F\in \bigcup_{\eta<\alpha}\mathscr{F}_{\beta_{\eta}}^{\ast}, S(P, F)\neq\emptyset\}\right\}+1.$$
Then the family of $\kappa$ ordinals $\{\beta_{\alpha}: \alpha<\kappa\}$ has been constructed. Put $\beta=\sup\{\beta_{\alpha}: \alpha<\kappa\}$. Now it suffices to check that ($\star$) holds in ($\mbox{P}_{\kappa}$) definition. Clearly, there exists $C\in\mathscr{C}$ such that $x_{\beta}\in C$. We can assume that $$\mathscr{P}_{C}=\{P\in \mathscr{P}: C\subset P\}=\{P_{\alpha, C}: \alpha<\kappa\}$$ and $$\mathscr{F}_{C}=\{F\in \mathscr{F}_{\beta}^{\ast}: x_{\beta}\in F\}=\{F_{\alpha}^{\ast}: \alpha<\kappa\}.$$

Take any $\nu<\kappa$. Let $P_{\nu}=\bigcap_{\alpha\in\nu}P_{\alpha, C}$ and $F_{\nu}=\bigcap_{\alpha\in\nu}F_{\alpha}^{\ast}$, and let $\lambda_{\nu}=\lambda_{P_{\nu}, F_{\nu}}$ and $z_{\nu}=x_{\lambda_{\nu}}$. Clearly, $\mathscr{F}_{\beta}^{\ast}=\bigcap_{\alpha<\kappa}\mathscr{F}_{\beta_{\alpha}}^{\ast}$, hence $F_{\nu}\in \mathscr{F}_{\beta_{\alpha}}^{\ast}$ for some $\alpha<\kappa$. Since $\beta\in \lambda_{P_{\nu}, F_{\nu}}\neq\emptyset$, it follows that $\lambda_{\nu}=\lambda_{P_{\nu}, F_{\nu}}\leq \beta_{\alpha+1}<\beta$, hence $z_{\nu}\in X_{\beta}$.

From the definition of the families $\mathscr{P}$ and $\mathscr{C}$, it follows that $\{z_{v}: v\in\kappa\}$ accumulates to some point $z\in C\cap \bigcap_{\nu<\kappa}F_{\nu}$. Thus, $z\in \overline{X_{\beta}}$. Assume that $F\in \mathscr{F}_{\gamma}$ for $\gamma<\beta$ and $x_{\beta}\in F$. Then $F\in \mathscr{F}_{C}$ and $F=F_{\alpha}^{\ast}\supset F_{\alpha}$ for some $\alpha<\kappa$. Therefore, it follows that $$z\in\bigcap_{\nu<\kappa}F_{\nu}\subset F_{\alpha}\subset F.$$
\end{proof}

\begin{lemma}\label{l4}
Assume that $G$ is a regular quasitopological group, and assume that $G$ satisfies ($\mbox{P}_{\kappa}$) for some infinite cardinal $\kappa$. Then $G$ is a $\kappa$-cellular space.
\end{lemma}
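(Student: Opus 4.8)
The plan is to prove the statement by contradiction: assume $G$ is a regular quasitopological group satisfying $(\mathrm{P}_{\kappa})$ but is not $\kappa$-cellular, fix a family $\mu$ of $G_{\delta}$-subsets of $G$ witnessing this (so $\overline{\bigcup\lambda}\subsetneq\overline{\bigcup\mu}$ for every $\lambda\subseteq\mu$ with $|\lambda|\le\kappa$), and produce a contradiction with $(\mathrm{P}_{\kappa})$.

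The first step is a reduction to \emph{elementary} $G_{\delta}$-sets. Given $A\in\mu$ and $x\in A$, continuity of the translation $g\mapsto xg$ together with regularity of $G$ yields a decreasing sequence $(V^{A,x}_{n})_{n}$ of symmetric open neighbourhoods of $e$ with $\overline{xV^{A,x}_{n+1}}\subseteq xV^{A,x}_{n}$ and $F_{A,x}:=\bigcap_{n}xV^{A,x}_{n}\subseteq A$. The family $\gamma:=\{F_{A,x}:A\in\mu,\ x\in A\}$ satisfies $\bigcup\gamma=\bigcup\mu$, and each member of $\gamma$ sits inside a member of $\mu$; hence a subfamily of $\gamma$ of size $\le\kappa$ with dense union would yield one for $\mu$, so $\gamma$ again witnesses non-$\kappa$-cellularity and we may assume $\mu=\gamma$ consists of sets $F=\bigcap_{n}z_{F}V^{F}_{n}$ with a distinguished point $z_{F}\in F$.

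Next I would build by transfinite recursion points $x_{\alpha}\in\bigcup\gamma$ and members $F_{\alpha}\in\gamma$ with distinguished point $x_{\alpha}$, for $\alpha<2^{\kappa}$, such that $x_{\alpha}\notin\overline{\bigcup_{\beta<\alpha}F_{\beta}}$; by shrinking the defining neighbourhoods $(V^{\alpha}_{n})_{n}$ of $F_{\alpha}$, using regularity, I may in addition require $\overline{x_{\alpha}V^{\alpha}_{1}}\cap\overline{\bigcup_{\beta<\alpha}F_{\beta}}=\emptyset$. This is possible at every stage $\alpha$ of cardinality $\le\kappa$, because then $\{F_{\beta}:\beta<\alpha\}$ is a subfamily of $\gamma$ of size $\le\kappa$, hence $\overline{\bigcup_{\beta<\alpha}F_{\beta}}\subsetneq\overline{\bigcup\gamma}$ and a point $x_{\alpha}\in\bigcup\gamma$ outside that closed set exists; pushing the recursion to the full length $2^{\kappa}$ needs a little extra care (for instance by first passing to a bad subfamily of $\gamma$ for which the closure of the union is minimal, or by reformulating $(\mathrm{P}_{\kappa})$ at the ordinal length actually produced), and this bookkeeping is one point to be checked. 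Then put $\mathscr{P}_{\eta}:=\{\overline{x_{\eta}V^{\eta}_{n}}:n<\omega\}$, a family of closed subsets of $G$ of cardinality $\le\omega\le\kappa$, and apply $(\mathrm{P}_{\kappa})$ to $\{x_{\alpha}:\alpha<2^{\kappa}\}$ and $(\mathscr{P}_{\eta})_{\eta<2^{\kappa}}$: this gives $\beta<2^{\kappa}$ and $y\in\overline{\{x_{\alpha}:\alpha<\beta\}}$ with $y\in\overline{x_{\eta}V^{\eta}_{n}}$ whenever $\eta<\beta$, $n<\omega$ and $x_{\beta}\in\overline{x_{\eta}V^{\eta}_{n}}$.

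It remains to extract the contradiction. Since $\{x_{\alpha}:\alpha<\beta\}\subseteq\bigcup_{\alpha<\beta}F_{\alpha}$ and $x_{\beta}\notin\overline{\bigcup_{\alpha<\beta}F_{\alpha}}$, the point $y$ lies in $\overline{\{x_{\alpha}:\alpha<\beta\}}$ and is different from $x_{\beta}$, so (as $G$ is $T_{1}$) there is a symmetric $W\in\mathcal{N}_{e}$ with $y\notin x_{\beta}W$. The crux is to contradict this. Because $G$ is a quasitopological group, inversion and translations are homeomorphisms, so for symmetric $V$ one has $x_{\beta}\in\overline{x_{\eta}V}\iff x_{\eta}\in\overline{x_{\beta}V}$; exploiting this symmetry together with regularity and the nesting $\overline{x_{\eta}V^{\eta}_{n+1}}\subseteq x_{\eta}V^{\eta}_{n}$, one should be able to deduce from the reflection property above that $y\in\overline{x_{\beta}W'}$ for every $W'\in\mathcal{N}_{e}$, hence $y\in\bigcap_{W'\in\mathcal{N}_{e}}\overline{x_{\beta}W'}=\{x_{\beta}\}$ (using $T_{1}$), i.e.\ $y=x_{\beta}$, contradicting $y\neq x_{\beta}$; equivalently, $x_{\beta}=y\in\overline{\{x_{\alpha}:\alpha<\beta\}}$, impossible since $x_{\beta}\notin\overline{\bigcup_{\alpha<\beta}F_{\alpha}}$. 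I expect this last step — converting the purely combinatorial output of $(\mathrm{P}_{\kappa})$ into honest topological proximity of $y$ and $x_{\beta}$ using only the weak quasitopological structure — to be the main obstacle, and it is precisely what dictates the symmetry and nesting requirements imposed on the $V^{\alpha}_{n}$ and on the families $\mathscr{P}_{\eta}$ in the construction.
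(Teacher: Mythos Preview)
Your overall plan (contradiction, transfinite choice of points, then apply $(P_\kappa)$) agrees with the paper's, but your choice of the closed families $\mathscr{P}_\eta=\{\overline{x_\eta V^\eta_n}:n<\omega\}$ makes the last step unworkable, and this is a genuine gap rather than a bookkeeping issue. The reflection clause of $(P_\kappa)$ reads: if $x_\beta\in\overline{x_\eta V^\eta_n}$ then $y\in\overline{x_\eta V^\eta_n}$. But you constructed $x_\beta\notin\overline{\bigcup_{\alpha<\beta}F_\alpha}$, and since $\overline{x_\eta V^\eta_{n+1}}\subset x_\eta V^\eta_n$ with $\bigcap_n x_\eta V^\eta_n=F_\eta$, for every $\eta<\beta$ the hypothesis $x_\beta\in\overline{x_\eta V^\eta_n}$ fails for all large $n$---exactly the $n$ you would need to force $y$ close to anything. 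The symmetry $x_\beta\in\overline{x_\eta V}\Leftrightarrow x_\eta\in\overline{x_\beta V}$ does not rescue this: it relates $x_\eta$ to $x_\beta$, not $y$ to $x_\beta$, and in a mere quasitopological group you have no inclusion $V\!\cdot\! V\subset V'$ to chain $y\in\overline{x_\eta V}$ and $x_\eta\in\overline{x_\beta V}$ into $y\in\overline{x_\beta V'}$. So with your $\mathscr{P}_\eta$ the output of $(P_\kappa)$ carries essentially no information.

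The paper's key device, which is what you are missing, is to feed $(P_\kappa)$ \emph{translated complements}: with points $g_\gamma$ and open sets $U_{\gamma,n}$ playing the role of your $x_\gamma$ and $x_\gamma V^\gamma_n$, the families consist of the closed sets $(G\setminus U_{\eta,n})\,g_\eta^{-1}g_\alpha$ for $\alpha,\eta\le\gamma$ and $n<\omega$. Given the $(P_\kappa)$-output $\delta$ and $y\in\overline{\{g_\alpha:\alpha<\delta\}}$, one shows that $y_\eta:=g_\delta y^{-1}g_\eta$ lies in $B_\eta=\bigcap_n U_{\eta,n}$ for every $\eta<\delta$: if $y_\eta\notin U_{\eta,n}$, one uses that $y$ is a limit of the $g_\alpha$ to find $\alpha<\delta$ with $g_\delta\in(G\setminus U_{\eta,n+1})g_\eta^{-1}g_\alpha$ while simultaneously $y\in U_{\eta,n+1}g_\eta^{-1}g_\alpha$, contradicting the reflection property. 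Then $g_\delta=g_\delta y^{-1}y\in\overline{\{g_\delta y^{-1}g_\eta:\eta<\delta\}}=\overline{\{y_\eta:\eta<\delta\}}\subset\overline{\bigcup_{\eta<\delta}B_\eta}$, the desired contradiction. The extra shifts $g_\eta^{-1}g_\alpha$ are precisely what transport information from the limit point $y$ (approximated by the $g_\alpha$) back to $g_\delta$; your families contain no such shifts, and without them the argument cannot close.
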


\begin{proof}
Assume that $G$ is not a $\kappa$-cellular space. Then we can find a family $\{A_{\alpha}: \alpha<2^{\kappa}\}$ of non-empty sets of type $G_{\delta}$ such that
$A_{\gamma}\not\subset\overline{\bigcup_{\alpha<\gamma}A_{\alpha}}$ for any $\gamma<2^{\kappa}$. For each $\gamma<2^{\kappa}$, we can pick any $g_{\gamma}\in A_{\gamma}\setminus\overline{\bigcup_{\alpha<\gamma}A_{\alpha}}$, and take a sequence $(U_{\gamma, n})_{n\in\omega}$ of open sets of $G$ such that $g_{\gamma}\in U_{\gamma, n+1}\subset \overline{U_{\gamma, n+1}}\subset U_{\gamma, n}$ for any $n\in\omega$ and $B_{\gamma}=\bigcap_{n\in\omega}U_{\gamma, n}\subset A_{\gamma}$.
For each $\gamma<2^{\kappa}$, put $\mathscr{F}_{\gamma}=\{(G\setminus U_{\beta, n})g_{\alpha}^{-1}g_{\beta}: \alpha, \beta<\gamma, n\in\omega\}$ and $\mathscr{P}_{\gamma}=\mathscr{F}_{\gamma+1}$. Then the condition ($\mbox{P}_{\kappa}$) is satisfied for $G$, it follows that there exists $\delta\in 2^{\kappa}$ and $y\in \overline{\{g_{\alpha}: \alpha<\delta\}}$ such that if $\eta<\delta$, $P\in\mathscr{P}_{\eta}$ and $g_{\delta}\in P$, then $y\in P$. Therefore, $y\in P$ if $g_{\delta}\in P\in \mathscr{P}_{\delta}$. Now, for any $\eta<\delta$, put $y_{\eta}=g_{\delta}y^{-1}g_{\eta}$; we claim that $y_{\eta}\in B_{\eta}$. Suppose not, then there exists $n\in\omega$ such that $y_{\eta}\not\in U_{\eta, n}$. Clearly, we have $y\in g_{\eta}U_{\eta, n+1}^{-1}y\cap g_{\eta}(G\setminus \overline{U_{\eta, n+1}})^{-1}g_{_{\delta}}$. Since $y\in \overline{\{g_{\alpha}: \alpha<\delta\}}$ and $g_{\eta}U_{\eta, n+1}^{-1}y$, $g_{\eta}(G\setminus \overline{U_{\eta, n+1}})^{-1}g_{_{\delta}}$ are open, there exists $\alpha<\delta$ such that $g_{\alpha}\in g_{\eta}U_{\eta, n+1}^{-1}y\cap g_{\eta}(G\setminus \overline{U_{\eta, n+1}})^{-1}g_{_{\delta}}$. Then $g_{\delta}\in (G\setminus U_{\eta, n+1})g_{\eta}^{-1}g_{\alpha}$ and $y\in (U_{\eta, n+1})g_{\eta}^{-1}g_{\alpha}$, which is a contradiction.

Then since $y\in \overline{\{g_{\eta}: \eta<\delta\}}$, it follows that $$g_{\delta}=g_{\delta}y^{-1}y\in \overline{\{g_{\delta}y^{-1}g_{\eta}: \eta<\delta\}}=\overline{\{y_{\eta}: \eta<\delta\}},$$ hence $g_{\delta}\in \overline{\bigcup_{\eta<\delta} B_{\eta}}$. However, it is obvious that $g_{\delta}\not\in \overline{\bigcup_{\eta<\delta} B_{\eta}}$, which is a contradiction. Therefore, $G$ is a $\kappa$-cellular space.

\end{proof}

By Lemmas~\ref{l3} and~\ref{l5}, we have the following lemma.

\begin{lemma}\label{l5}
Let $X$ be a regular quasitopological group, which is a $\kappa$-$\Sigma$-space with $e(G)\leq\kappa$. Then $G$ is a $\kappa$-cellular space.
\end{lemma}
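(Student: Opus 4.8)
The plan is to obtain Lemma~\ref{l5} by simply chaining the two preceding results. First I would note that a regular quasitopological group $G$ is, in particular, a regular topological space; together with the standing hypotheses this means that $G$ is a regular $\kappa$-$\Sigma$-space with $e(G)\leq\kappa$. Hence the hypotheses of Lemma~\ref{l3} are satisfied, and applying it gives that property $(\mbox{P}_{\kappa})$ holds for $G$.

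Having $(\mbox{P}_{\kappa})$ in hand, I would then invoke Lemma~\ref{l4}: $G$ is a regular quasitopological group satisfying $(\mbox{P}_{\kappa})$, so Lemma~\ref{l4} applies verbatim and yields that $G$ is a $\kappa$-cellular space. This is exactly the conclusion sought, so the proof is complete once the two applications are spelled out.

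The only care needed is bookkeeping: the statement uses both $X$ and $G$ for what must be a single group (with $e(G)\leq\kappa$), so I would fix notation at the outset and make sure the $\kappa$-$\Sigma$ and extent hypotheses are read off correctly for the input to Lemma~\ref{l3}, and that the quasitopological-group structure is what feeds Lemma~\ref{l4}. There is no genuine obstacle here — all the real work (the inductive construction of the ordinals $\{\beta_\alpha\}$ in Lemma~\ref{l3}, and the contradiction argument via the sets $B_\eta$ in Lemma~\ref{l4}) has already been done — so I expect the ``hard part'' to be nothing more than verifying that no extraneous hypothesis is being assumed when the two lemmas are combined.
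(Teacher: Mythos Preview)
Your proposal is correct and matches the paper's approach exactly: the paper derives Lemma~\ref{l5} directly from Lemmas~\ref{l3} and~\ref{l4} without any additional argument (indeed, the paper supplies no separate proof, only the line ``By Lemmas~\ref{l3} and~\ref{l4}''). Your observation about the $X$/$G$ notation mismatch is also apt; fixing that at the outset is the only housekeeping needed.
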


\begin{theorem}\label{t3}
Let $G$ be a regular 2-semitopological group and $G^{2}$ be a $\kappa$-$\Sigma$-space with $e(G)\leq\kappa$. Then $G$ is a $\kappa$-cellular space.
\end{theorem}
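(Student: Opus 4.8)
The plan is to reduce the statement to the quasitopological case already handled in Lemma~\ref{l5}, by passing to the group $Sym(G)$, which by Reznichenko's work sits inside $G^{2}$ as a closed subspace; this mirrors the strategy used for Theorem~\ref{t6}. First, since $G$ is regular it is in particular $T_{1}$ (should one not build $T_{1}$ into ``regular'', replace $G$ at the outset by the quotient $G/\overline{\{e\}}$: the quotient map is topology-preserving, $G/\overline{\{e\}}$ is a $T_{1}$ regular $2$-semitopological group, and being a $\kappa$-$\Sigma$-space, having extent $\leq\kappa$, and being a $\kappa$-cellular space all pass to and from $G$ along that map and its square, so nothing is lost). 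Thus we may assume $G$ is a $T_{1}$ regular $2$-semitopological group, and then \cite[Proposition 6]{RE} shows that $Sym(G)$ is a Hausdorff quasitopological group and that $x\mapsto(x,x^{-1})$ embeds $Sym(G)$ as a \emph{closed} subspace of $G^{2}$.

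Next I would verify the hypotheses of Lemma~\ref{l5} for $Sym(G)$. It is a quasitopological group by the cited result, and, being (homeomorphic to) a subspace of the product $G\times G$ of the regular space $G$, it is regular. Since $Sym(G)$ is closed in $G^{2}$ and the class of $\kappa$-$\Sigma$-spaces is closed-hereditary — given the witnessing family $\mathscr{P}$ and the closed $\kappa$-countably compact cover $\mathscr{C}$ of $G^{2}$, their traces on the closed set $Sym(G)$ again witness the property — $Sym(G)$ is a $\kappa$-$\Sigma$-space. The remaining point is the extent: as a closed subspace of $G^{2}$ we have $e(Sym(G))\leq e(G^{2})$, and I would deduce $e(G^{2})\leq\kappa$ by combining the $\kappa$-$\Sigma$ structure of $G^{2}$ with $e(G)\leq\kappa$, using that a closed discrete subset of $G^{2}$ meets each member of $\mathscr{C}$ (a closed $\kappa$-countably compact, hence countably compact, set) in a finite set, and that the group structure of $G^{2}$ together with $e(G)\leq\kappa$ forces such a set to have size $\leq\kappa$. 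Granting this, Lemma~\ref{l5} applies and $Sym(G)$ is a $\kappa$-cellular space.

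Finally I would transfer $\kappa$-cellularity from $Sym(G)$ back to $G$. Composing the embedding with a coordinate projection, $j\colon Sym(G)\to G$, $x\mapsto x$, is a continuous bijection (the topology of $Sym(G)$ refines that of $G$). Now $\kappa$-cellularity passes along an arbitrary continuous surjection $f\colon X\to Y$: if $\mu$ is a family of $G_{\delta}$-subsets of $Y$, then $\{f^{-1}(M):M\in\mu\}$ is a family of $G_{\delta}$-subsets of $X$, so by $\kappa$-cellularity of $X$ there is $\mu_{0}\subset\mu$ with $|\mu_{0}|\leq\kappa$ and $\overline{\bigcup_{M\in\mu_{0}}f^{-1}(M)}=\overline{\bigcup_{M\in\mu}f^{-1}(M)}$; applying $f$ and using $f(\overline{A})\subset\overline{f(A)}$ together with $f(f^{-1}(B))=B$ yields $\bigcup\mu\subset\overline{\bigcup\mu_{0}}$, hence $\overline{\bigcup\mu}=\overline{\bigcup\mu_{0}}$. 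Applying this to $j$ shows that $G$ is a $\kappa$-cellular space.

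The step requiring the most care is the extent estimate $e(Sym(G))\leq\kappa$: we are handed only $e(G)\leq\kappa$, and extent is not well-behaved under products in general, so the argument must genuinely exploit that $G^{2}$ carries the $\kappa$-$\Sigma$ structure — so that closed discrete subsets are finite on each piece of the cover $\mathscr{C}$ — in tandem with the hypothesis on $G$. Everything else — the closed embedding $Sym(G)\hookrightarrow G^{2}$, closed-heredity of the $\kappa$-$\Sigma$ property, and the transfer of $\kappa$-cellularity along the continuous bijection $Sym(G)\to G$ — is routine.
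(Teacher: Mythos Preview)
Your approach is essentially the paper's: pass to $Sym(G)$ via its closed embedding in $G^{2}$ (using \cite[Proposition~6]{RE}), apply Lemma~\ref{l5} to the resulting regular quasitopological $\kappa$-$\Sigma$-group, and push $\kappa$-cellularity forward along the continuous surjection $Sym(G)\to G$. You are in fact more explicit than the paper in isolating the extent estimate $e(Sym(G))\leq\kappa$ as the step that needs care --- the paper simply records ``$e(G)\leq\kappa$'' at that point without further comment --- but the overall architecture is identical.
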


\begin{proof}
From \cite[Proposition~6]{RE}, it follows that Sym $G$ is a quasitopological group and embeds closed in $G^{2}$. Then Sym$G$ is a regular a $\kappa$-$\Sigma$-space with $e(G)\leq\kappa$. By Lemma~\ref{l5}, Sym$G$ is a $\kappa$-cellular space. Since $G$ is a continuous image of Sym$G$, it follows that $G$ is a $\kappa$-cellular space.
\end{proof}

By a similar proof of the product of two Lindel\"{o}f $\Sigma$-spaces being Lindel\"{o}f $\Sigma$-space (see \cite{TVV}), we have the following lemma.

\begin{lemma}\label{l6}
Let $X$ be a regular $\kappa$-Lindel\"{o}f $\kappa$-$\Sigma$ space. Then $X^{2}$ is a $\kappa$-Lindel\"{o}f $\kappa$-$\Sigma$ space.
\end{lemma}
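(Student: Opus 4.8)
The plan is to reduce $\kappa$-Lindel\"{o}f $\kappa$-$\Sigma$-ness to the convenient form ``network of size $\le\kappa$ modulo a cover by compact closed sets'' and then to copy, uniformly in $\kappa$, the classical argument that a finite product of such spaces is again of this form (the $\kappa=\omega$ case being the Lindel\"{o}f $\Sigma$ statement of \cite{TVV}). First I would note that a $\kappa$-Lindel\"{o}f space $X$ has $e(X)\le\kappa$: if $S\subseteq X$ is closed and discrete, choose for each $s\in S$ an open $U_{s}$ with $U_{s}\cap S=\{s\}$; then $\{X\setminus S\}\cup\{U_{s}:s\in S\}$ is an open cover in which each $U_{s}$ is indispensable, so a subcover of size $\le\kappa$ forces $|S|\le\kappa$. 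Consequently Proposition~\ref{p5} applies to $X$ and produces a family $\mathscr{P}$ with $|\mathscr{P}|\le\kappa$ together with a cover $\mathscr{C}$ of $X$ by closed $\kappa$-countably compact sets such that $C\subseteq U$ with $U$ open implies $C\subseteq P\subseteq U$ for some $P\in\mathscr{P}$.

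The step I expect to matter most is the observation that every $C\in\mathscr{C}$ is in fact \emph{compact}: being closed in the $\kappa$-Lindel\"{o}f space $X$, $C$ is itself $\kappa$-Lindel\"{o}f, and since $C$ is also $\kappa$-countably compact, every open cover of $C$ has a subcover of size $\le\kappa$ and hence a finite subcover. Thus $\mathscr{C}$ is a cover of $X$ by compact closed sets, and $\mathscr{P}$ is a network modulo $\mathscr{C}$. This is what makes the product argument legitimate: working with ``$\kappa$-countably compact'' alone would not suffice, since products of $\kappa$-countably compact spaces need not be $\kappa$-countably compact, whereas products of compact sets are compact.

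Now I would pass to $X^{2}$ with $\mathscr{P}^{2}=\{P\times Q:P,Q\in\mathscr{P}\}$ and $\mathscr{C}^{2}=\{C\times D:C,D\in\mathscr{C}\}$. Then $|\mathscr{P}^{2}|\le\kappa$, $\mathscr{C}^{2}$ covers $X^{2}$ by compact (hence $\kappa$-countably compact) closed sets, and $\mathscr{P}^{2}$ is a network modulo $\mathscr{C}^{2}$: if $C\times D\subseteq W$ with $W$ open in $X^{2}$, then Wallace's theorem (the generalized tube lemma) gives open $U\supseteq C$, $V\supseteq D$ with $U\times V\subseteq W$, and picking $P,Q\in\mathscr{P}$ with $C\subseteq P\subseteq U$, $D\subseteq Q\subseteq V$ yields $C\times D\subseteq P\times Q\subseteq W$. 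Writing $\mathscr{P}^{2}$ as a union of $\le\kappa$ singleton (hence locally finite) families shows $X^{2}$ is a $\kappa$-$\Sigma$-space.

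It remains to check that $X^{2}$ is $\kappa$-Lindel\"{o}f, which I would do by the usual counting: given an open cover $\mathscr{W}$ of $X^{2}$, for each $E\in\mathscr{C}^{2}$ fix a finite $\mathscr{W}_{E}\subseteq\mathscr{W}$ covering $E$ and some $P_{E}\in\mathscr{P}^{2}$ with $E\subseteq P_{E}\subseteq\bigcup\mathscr{W}_{E}$; since $\mathscr{P}^{2}$ has at most $\kappa$ members, only $\le\kappa$ distinct sets occur among the $P_{E}$, they still cover $X^{2}$, and the union of the associated $\mathscr{W}_{E}$ is a subfamily of $\mathscr{W}$ of size $\le\kappa\cdot\omega=\kappa$ covering $X^{2}$. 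Finally $X^{2}$ is regular as a product of regular spaces, so $X^{2}$ is a regular $\kappa$-Lindel\"{o}f $\kappa$-$\Sigma$-space. Apart from the compactness observation in the second paragraph, every step is the standard finite-product bookkeeping for ``network modulo a compact cover''.
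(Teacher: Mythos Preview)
Your argument is correct and is exactly the natural $\kappa$-generalization of the classical Lindel\"{o}f $\Sigma$ product argument that the paper invokes (the paper gives no details, merely citing \cite{TVV}). Your key observation---that in a $\kappa$-Lindel\"{o}f space each closed $\kappa$-countably compact set is genuinely compact---is precisely the point that makes the generalization go through, and your use of Wallace's theorem and the counting for the $\kappa$-Lindel\"{o}f part are the standard steps.
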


By Theorem~\ref{t3} and Lemma~\ref{l6}, we have the following theorem.

\begin{theorem}
Let $G$ be a regular $\kappa$-Lindel\"{o}f $\kappa$-$\Sigma$ 2-semitopological group. Then $G$ is a $\kappa$-cellular space.
\end{theorem}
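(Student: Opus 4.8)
The plan is to reduce the final statement directly to Theorem~\ref{t3} by showing that the hypotheses of that theorem are met. Recall that Theorem~\ref{t3} asserts: if $G$ is a regular $2$-semitopological group and $G^{2}$ is a $\kappa$-$\Sigma$-space with $e(G)\leq\kappa$, then $G$ is a $\kappa$-cellular space. So all I need to verify is that a regular $\kappa$-Lindel\"{o}f $\kappa$-$\Sigma$ $2$-semitopological group $G$ satisfies (a) $G^{2}$ is a $\kappa$-$\Sigma$-space and (b) $e(G)\leq\kappa$.

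For (a), I would invoke Lemma~\ref{l6}: since $G$ is a regular $\kappa$-Lindel\"{o}f $\kappa$-$\Sigma$-space, its square $G^{2}$ is again a $\kappa$-Lindel\"{o}f $\kappa$-$\Sigma$-space, and in particular a $\kappa$-$\Sigma$-space. For (b), the key observation is that a $\kappa$-Lindel\"{o}f space has extent at most $\kappa$: any closed discrete subspace $S$ of $G$, together with the complement of each of its points, gives an open cover of size $|S|+1$ that has no subcover of size $<|S|$ unless $|S|\leq\kappa$; hence $e(G)\leq\kappa$. (Here I am reading ``$\kappa$-Lindel\"{o}f'' in the natural sense that every open cover has a subcover of cardinality $\leq\kappa$, consistent with the $\kappa$-Lindel\"{o}f degree notation used in the paper.) With these two facts in hand, $G$ is a regular $2$-semitopological group whose square is a $\kappa$-$\Sigma$-space and with $e(G)\leq\kappa$, so Theorem~\ref{t3} applies and yields that $G$ is a $\kappa$-cellular space.

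I do not anticipate a serious obstacle here; the real work has already been absorbed into Theorem~\ref{t3} (via Lemma~\ref{l5}, which in turn rests on Lemmas~\ref{l3} and~\ref{l4}) and into Lemma~\ref{l6}. The only point requiring a sentence of care is the passage $e(G)\leq\kappa$ from $\kappa$-Lindel\"{o}fness, and the bookkeeping that the hypothesis ``$G^2$ is a $\kappa$-$\Sigma$-space with $e(G)\le\kappa$'' in Theorem~\ref{t3} is exactly what Lemma~\ref{l6} plus this extent bound deliver. So the proof is essentially a two-line citation chain: apply Lemma~\ref{l6} to get the square's structure, note the extent bound, and quote Theorem~\ref{t3}.
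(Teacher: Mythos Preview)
Your proposal is correct and follows exactly the paper's own argument: the paper simply cites Theorem~\ref{t3} and Lemma~\ref{l6}, and you have spelled out precisely how these combine, including the routine observation $e(G)\leq l(G)\leq\kappa$ that the paper leaves implicit.
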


{\bf Acknowledgements}
We wish to express our sincere thanks to professor T. Banakh who
provided Example~\ref{e0}. Moreover, we wish to thank
the reviewers for careful reading preliminary version of this paper and providing many valuable suggestions.

\smallskip
{\bf Declarations}

\smallskip
{\bf Ethical Approval}

\smallskip
This declaration is ``not applicable''.

\smallskip
{\bf Competing interests}

\smallskip
The authors declare that they have no conflict of interest.

\smallskip
{\bf Availability of data and materials}

\smallskip
This declaration is ``not applicable''.

\end{document}